\newtheorem{theorem}{Theorem}[section]
\newtheorem{lemma}[theorem]{Lemma}
\newtheorem{corollary}[theorem]{Corollary}
\newtheorem{proposition}[theorem]{Proposition}
\theoremstyle{definition}
\newtheorem{definition}[theorem]{Definition}
\newtheorem{example}[theorem]{Example}
\theoremstyle{remark}
\newtheorem{remark}[theorem]{Remark}
\numberwithin{equation}{section}
\newcommand{\be}{\begin{equation}}
\newcommand{\ee}{\end{equation}}
\newcommand{\mc}{\mathcal}
\newcommand{\C}{{\mathbb C}}
\newcommand{\Z}{{\mathbb Z}}
\newcommand{\bU}{{\mathbb U}}
\newcommand{\CB}{{\mathcal B}}
\newcommand{\CF}{{\mathcal F}}
\newcommand{\CP}{{\mathcal P}}
\newcommand{\Ker}{{\rm{Ker}}}
\newcommand{\id}{{\rm{id}}}
\newcommand{\mf}{\mathfrak}
\newcommand{\fg}{{\mf g}}
\newcommand{\cD}{\mc D}
\newcommand{\cE}{\mc E}
\newcommand{\La}{\Lambda}
\newcommand{\la}{\lambda}
\newcommand{\ve}{\varepsilon}
\newcommand{\zt}{{\Z/2\Z}}
\newcommand{\End}{{\rm{End}}}
\newcommand{\Hom}{{\rm{Hom}}}
\newcommand{\GL}{{\rm{GL}}}
\newcommand{\Sym}{{\rm{Sym}}}
\newcommand{\Exp}{{\rm{Exp}}}
\newcommand{\inv}{{^{-1}}}
\newcommand{\gl}{{\mathfrak {gl}}}
\newcommand{\osp}{{\mathfrak {osp}}}
\newcommand{\ul}{\underline}
\newcommand{\lr}{\longrightarrow}
\newcommand{\ol}{\overline}
\newcommand{\Sp}{{\rm Sp}}
\newcommand{\ot}{\otimes}
\newcommand{\sdim}{{\rm sdim\,}}
\newcommand{\OSp}{{\rm OSp}}
\begin{document}

\normalfont

\title[Invariant theory of the orthosymplectic supergroup]{The second fundamental theorem of
invariant\\ theory for the  orthosymplectic supergroup}

\author{G.I. Lehrer and R.B. Zhang}
\thanks{This research was supported by the Australian Research Council}
\address{School of Mathematics and Statistics,
University of Sydney, N.S.W. 2006, Australia}
\email{gustav.lehrer@sydney.edu.au, ruibin.zhang@sydney.edu.au}
\begin{abstract}
In a previous work we established a super Schur-Weyl-Brauer duality
between the orthosymplectic supergroup of superdimension
$(m|2n)$ and the Brauer algebra with parameter $m-2n$. This led to a
proof of the first fundamental theorem of invariant theory, using some elementary
algebraic supergeometry, and based upon an idea of Atiyah. In this work we use the same circle of ideas to
prove the second fundamental theorem for the orthosymplectic
supergroup. The proof uses algebraic supergeometry to reduce the problem to the
case of the general linear supergroup, which is understood. The main result has a succinct
formulation in terms of Brauer diagrams. Our proof includes new proofs of the
corresponding second fundamental theorems for the classical orthogonal and symplectic
groups, as well as their quantum analogues. These new proofs are independent of the Capelli
identities, which are replaced by algebraic geometric arguments.
\end{abstract}

\subjclass[2010]{16W22,15A72,17B20}

\keywords{Orthosymplectic Lie superalgebra, supergroup, tensor invariants, Brauer algebra, Schur-Weyl duality}

\maketitle

\tableofcontents

\section{Introduction}\label{s:intro}
This paper is a sequel to \cite{LZ6}, in which we proved the first fundamental theorem of invariant theory
for the orthosymplectic Lie group $G=\OSp(m|2n,\Lambda)$ over the infinite dimensional Grassmann algebra $\Lambda$.
That theorem provides a set of generators for the invariants of  $G$ on $V^{\ot r}$, where $V$ is the `natural' representation
of $G$ on the superspace $V$ of superdimension $(m|2n)$; this theorem is equivalent to the statement that there is a
surjection $B_r(m-2n)\lr \End_G(V^{\ot r})$, where $B_r(m-2n)$ is the $r$-string Brauer algebra with
parameter $m-2n$. In this paper we give a linear description of all relations among these generators,
thus proving the second fundamental theorem of invariant theory.

We shall maintain the notation of {\it op.~cit.}, and use its results.

In this Introduction, we recall the basic notation and general setup of \cite{LZ6}, and in the next section we state
the main result of \cite{LZ6}, as well as giving a brief discussion of the invariant theory of the general linear supergroup $\GL(V)$,
which will figure in the proof of our main result. In the following section we state and prove the result.
Our method is based upon an idea due to Atiyah,  Bott and Patodi \cite[Appendix]{ABP} in the classical case, to use algebraic supergeometry
to reduce the theorem to the case of the general linear supergroup $\GL(V)$. In particular, we use geometric means
to define an injective $\La$-linear map $h:\End_G(V^{\ot r})\lr \left((V^*)^{\ot r}\ot V^{\ot r}\right)^{\GL(V)}$. 
Using the fact that a full description of $\left((V^*)^{\ot r}\ot V^{\ot r}\right)^{\GL(V)}$ 
is available in terms of generators and relations, this provides the means to prove our main result, which has a 
particularly attractive formulation in terms of diagrams in the Brauer category \cite{LZ5}.   

\subsection{Linear superalgebra} (See \cite[\S\S2.1,2.2]{LZ6})
We denote by $V_\C=(V_{\C})_{\bar0}\oplus (V_\C)_{\bar1}$ a $\zt$-graded complex vector space
of superdimension $\sdim V=(m|2n)$, so that $\dim(V_\C)_{\bar0}=m$ and $\dim(V_\C)_{\bar1}=2n$. If $\Lambda(N)$
denotes the exterior algebra on $\C^N$, then the natural inclusion $\C^N\subset \C^{N+1}$ induces an inclusion $\Lambda(N)\subset\Lambda(N+1)$. We write $\Lambda:=\lim\limits_{\to}\Lambda(N)$ for the direct limit; this is the Grassmann algebra.

Since $\Lambda$ is evidently a $\zt$-graded algebra (graded by partity of the degree),
we may form $V=V_\C\ot_\C\Lambda$. This is a $\Z_2$-graded $\Lambda$-module, and we write $V=V_{\bar0}\oplus V_{\bar1}$.
For a homogeneous element $v\in V$, we denote by $[v]$ the parity
of $v$. Thus $[v]=0$ if $v\in V_{\bar0}$ while $[v]=1$ for $v\in V_{\bar1}$.

The $\Lambda$-module $\End_\Lambda(V)$ of $\Lambda$-endomorphisms of $V$ is $\zt$-graded,
and we write $E:=\End_\Lambda(V)_{\bar0}$.

The group $\GL(V)$ is the group of invertible elements of $E$, that is
\[
\GL(V)=\{g\in \End_{\Lambda}(V)_{\bar0}\mid g \ \text{ invertible}\}.
\]
This is very much in the spirit of the physics literaure \cite{SS, DeW}.

\subsection{The orthosymplectic supergroup} Next suppose given a non-degenerate even bilinear form
\[
(-, - )_\C: V_\C\times V_\C \longrightarrow \C,
\]
which is supersymmetric, that is,  $( u  , v )_\C =(-1)^{[u][v]} ( v,  u )_\C$
for all $u, v\in V_\C$.   This implies that the form is
symmetric on $(V_\C)_{\bar0}\times (V_\C)_{\bar0}$ and
skew symmetric on $(V_\C)_{\bar1}\times (V_\C)_{\bar1}$, and
satisfies $(V_{\bar0},  V_{\bar1})_\C=0=(V_{\bar1}, V_{\bar0})_\C$.
Also, by non-degeneracy, $\dim (V_\C)_{\bar1}=2n$ must be even.
We call this a nondegenerate {\em supersymmetric  form}.   Let
$
\eta = \begin{pmatrix} I_m & 0 \\ 0 & J\end{pmatrix},
$
where $I_m$ is the identity matrix of size $m\times m$ and $J$ is a skew symmetric matrix of size
$2n\times 2n$ given by
$J=diag(\sigma, \dots, \sigma)$ with $\sigma=\begin{pmatrix} 0 & -1 \\ 1 & 0\end{pmatrix}$.
Then there exists an ordered homogeneous basis $\cE=(e_1,  e_2, \dots, e_{m+2n}) $ of $V_\C$
such that
\begin{eqnarray}\label{eq:standard}
(e_a, e_b)=\eta_{a b}, \quad \text{for all $a, b$}.
\end{eqnarray}

Let $V:=V_\C\otimes\Lambda$. The given form $(- , - )_\C$ extends uniquely to a $\Lambda$-bilinear form
\be\label{eq:laform}
( - , - ): V\times V\longrightarrow \Lambda,
\ee
which is even and nongenerate,  and is supersymmetric in the sense that
$(v, w)=(-1)^{[v][w]}(w, v)$ for all $v, w\in V$. We call $V$ an
orthosymplectic superspace, and call $\cE$
an {\em orthosymplectic basis} of $V_\C$ and of $V$.
Note that $(-,-)$ is $\Lambda$-bilinear in the sense that
for $\la,\la'\in\La$ and $v,v'\in V$, we have
\be\label{eq:bil}
(\la v, v'\la')=\la(v,v')\la'.
\ee

\begin{definition}\label{def:sg} The orthosymplectic supergroup $G=\OSp(V)$ is defined as the set of elements
of $\GL(V)$ which preserve the form $(-,-)$. Specifically, the orthosymplectic
supergroup $G:=\{g\in\GL(V)\mid (gv,gw)=(v,w)\text{ for all }v,w\in V\}$.
\end{definition}

Although $G=\OSp(V)$ will be the main focus of this work,
we shall also refer to the following related algebras and groups.

We write $\osp(V_\C)$ for the orthosymplectic Lie superalgebra over $\C$ \cite{K, S};
this is the Lie sub-superalgebra of $\gl(V_\C)$  given by
\[
\osp(V_\C) =\{X\in \End_\C(V_\C) \mid (X v, w)_\C + (-1)^{[X][v]}(v, X w)_\C = 0,\  \forall v, w\in V_\C\}.
\]
Then $\osp(V)=(\osp(V_\C)\otimes\Lambda)_{\bar0}$.

Write $\OSp(V)_0={\rm O}((V_\C)_{\bar0})\times{\rm Sp}((V_\C)_{\bar1})$;
this group, which could be thought of as the degree zero part of $\OSp(V)$,
acts on $\osp(V_\C)$ by conjugation, and
both $\OSp(V)_0$ and $\osp(V_\C)$ act  naturally on $V_\C^{\otimes r}$ for all $r$.
Their actions are compatible in the following sense. For all $g\in\OSp(V)_0$,
$X\in \osp(V_\C)$ and $w\in V_\C^{\otimes r}$, we have
$$
g(X w) =Ad_g(X)(g w),
$$
where $Ad_g$ denotes the conjugation action of $g\in\OSp(V)_0$ on $\osp(V_\C)$.

\begin{remark}\label{rem:HC-pair}
Note that $(\OSp(V)_0, \osp(V_\C))$ is a Harish-Chandra super pair
(see \cite{DM} for details).
\end{remark}
\section{Invariant theory for $\GL(V)$}
In this section we take $V_\C$ to be a $\C$-superspace with $\sdim (V)=(m|\ell)$, and write $V=V_\C\ot_\C\Lambda$,
for the corresponding $\Z_2$-graded $\Lambda$-module, where $\Lambda$ is the Grassmann algebra.
Given any two $\Z_2$-graded $\C$-vector spaces,  $V_\C,W_\C$, we may form
the tensor product $V_\C\ot_\C W_\C$ (resp. $V\ot_\Lambda W$). This is $\Z_2$-graded in the usual way.
We have a super-inversion $\tau:V_\C\ot W_\C\lr W_\C\ot V_\C$, given by
$$
\tau(v\ot w)=(-1)^{[v][w]}w\ot v,
$$
for homogeneous $v\in V_\C,w\in W_\C$, and extended linearly.
We shall be particularly interested in this inversion when $V_\C=W_\C$, in which case it defines an endomorphism
of $V_\C^{\ot 2}$.

Writing $V=V_\C\ot_\C\Lambda$ and similarly for $W$, the involution $\tau$ extends uniquely to a $\Lambda$-module homomorphism
\be\label{eq:tau}
\tau:V\ot W\to W\ot V,\;\;\tau(v\ot w)=(-1)^{[v][w]}w\ot v,
\ee
where $v\in V,w\in W$ are homogeneous.

In particular, we can form the tensor powers $T^r(V_\C)=V_\C^{\ot r}=V^{\ot_\C r}$
and $T^r(V)=V^{\ot_\Lambda r}$. These spaces inherit actions from the relevant groups and algebras, and
we shall be concerned with the algebras $\End_{G}(V^{\ot r})$ for various $G$, especially $G=\GL(V)$,
and  $G=\OSp(V)$ when $\ell=2n$ is even.
The first fundamental theorem (FFT) describes generators, and the second fundamental theorem (SFT) describes
relations for these algebras. In our previous paper \cite{LZ6} we proved the FFT for $\OSp(V)$, while the main purpose of this paper
is to prove an SFT for this case.

\subsection{Invariant theory for $\GL(V)$-the first fundamental theorem} We shall require a little background on $\GL(V)$.
The general linear Lie superalgebra $\gl(V_\C)$ is defined as the complex $\Z_2$-graded algebra
$\End_\C(V_\C)$ with a bilinear
Lie bracket defined for $X, Y\in \gl(V_\C)$ by
\[
[X, Y]=X Y - (-1)^{[X][Y]}Y X,
\]
where the right hand side is defined by composition of endomorphisms.
This algebra is often referred to as $\gl(m|\ell)$.

Let $\widetilde\gl(V)=\gl(V_\C)\otimes_\C\Lambda$ and regard it as a Lie superalgebra
over $\Lambda$ with a $\Lambda$-bilinear Lie bracket defined by
\[
[X\otimes\lambda, Y\otimes\mu]=[X, Y]\otimes (-1)^{[\lambda]([Y\otimes\mu])}\mu\lambda,
\]
for all $X, Y\in \gl(V_\C)$ and $\mu, \nu\in\Lambda$. Then $\gl(V)=(\gl(V_\C)\otimes_\C\Lambda)_{\bar0}$
forms a Lie subalgebra of $\widetilde\gl(V)$ over $\Lambda_{\bar 0}$, which will be referred to as the
Lie algebra of $\GL(V)$.
Note that $\gl(V)=\gl(m|\ell)_{\bar0}\ot_\C\Lambda_{\bar0}\oplus \gl(m|\ell)_{\bar1}\ot_\C\Lambda_{\bar1}$.
Thus both $\gl(V)$ and $\GL(V)$ encode the graded structure of $V_\C$ and of $V$ (cf. Lemma \ref{lem:exp} below).
There is a natural $\widetilde\gl(V)$ action on $V$ given by
$(X\otimes\lambda).(v\otimes\mu)=
X.v\otimes (-1)^{[\lambda]([v\otimes\mu])}\mu\lambda$.
It restricts to an action of $\gl(V)$.

The following lemma provides a useful link between
$\gl(m|\ell)$ and $\GL(V)$. Its proof may be found in \cite[Proposition 5.2]{SZ}
and \cite[Lemma 2.6]{LZ6} (see also \cite{Be}).
\begin{lemma}\label{lem:exp}
\begin{enumerate}
\item Given any $X\in \gl(V)$, let
$\exp(X):=\sum_{i=0}^\infty \frac{X^i}{i!}$.
Then $\exp(X)$  is a well defined automorphism of $V$ which lies in $\GL(V)$.
Hence there exists a map
\[
{\rm Exp}: \gl(V) \longrightarrow \GL(V), \quad X\mapsto \exp(X).
\]
\item The image of $\Exp$ generates $\GL(V)$.
\end{enumerate}
\end{lemma}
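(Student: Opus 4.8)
The plan is to separate the ``body'' and the ``soul'' of a matrix and to exploit the elementary but crucial observation that, relative to a homogeneous basis, any single element of $\gl(V)$ or of $\GL(V)$ has entries involving only finitely many of the odd generators of $\Lambda$, hence lies over a finite-dimensional Grassmann subalgebra $\Lambda(N)\subset\Lambda$ whose augmentation ideal $\Lambda(N)^{+}$ is nilpotent.

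For part (1), fix $X\in\gl(V)$ and use a homogeneous basis to identify $\End_\Lambda(V)$ with the matrix algebra $M_{m+\ell}(\Lambda)$; by the observation above, $X$ lies in the finite-dimensional associative $\C$-algebra $M_{m+\ell}(\Lambda(N))$ for a suitable $N$. In any finite-dimensional associative $\C$-algebra the exponential series of an arbitrary element converges absolutely (compare $\sum_i\|X^i\|/i!$ with $\sum_i\|X\|^i/i!$ for a submultiplicative norm), so $\exp(X)$ is a well-defined element of $M_{m+\ell}(\Lambda)=\End_\Lambda(V)$. Since $X$ is even and $\Lambda$-linear, so is every power $X^i$, hence so is $\exp(X)$; thus $\exp(X)\in\End_\Lambda(V)_{\bar0}$. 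As $X$ commutes with $-X$, the usual Cauchy-product argument gives $\exp(X)\exp(-X)=\exp(0)=\id$, so $\exp(X)$ is invertible, i.e.\ $\exp(X)\in\GL(V)$; this defines the map $\Exp$.

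For part (2), let $\epsilon\colon\Lambda\to\C$ be the augmentation, and for $g\in\GL(V)$ write $\epsilon(g)$ for the matrix obtained by applying $\epsilon$ to each entry. Writing $g$ in block form compatible with $\sdim V=(m|\ell)$ shows that $\epsilon(g)$ is block-diagonal, and applying $\epsilon$ to $gg\inv=\id$ shows $\epsilon(g)\in\GL_m(\C)\times\GL_\ell(\C)$; let $g_0\in\GL(V)$ be $\epsilon(g)$ viewed inside $\GL(V)$ via the (manifestly even) block-diagonal embedding $\GL_m(\C)\times\GL_\ell(\C)\hookrightarrow\GL(V)$. Then $g_0\inv g=\id+\nu$, where $\nu$ has all entries in $\Lambda(N)^{+}$ for some $N$ and so is nilpotent; hence $Y:=\log(\id+\nu)=\sum_{k\ge1}(-1)^{k+1}\nu^{k}/k$ is a finite sum, defines an (even, $\Lambda$-linear) element $Y\in\gl(V)$, and $\Exp(Y)=g_0\inv g$. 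Finally $g_0$ itself lies in the subgroup generated by the image of $\Exp$: the classical complex Lie group $\GL_m(\C)\times\GL_\ell(\C)$ is connected, hence generated by the ordinary matrix exponentials $\exp(Z)$ with $Z$ in its Lie algebra $\gl(m|\ell)_{\bar0}$, and $\exp(Z)\ot 1=\Exp(Z\ot 1)$ with $Z\ot 1\in\gl(m|\ell)_{\bar0}\ot 1\subseteq\gl(V)$. Therefore $g=g_0\cdot\Exp(Y)$ is a product of elements of the image of $\Exp$, which proves (2).

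I expect the only genuinely non-routine point to be the finite-support observation itself: although $\Lambda$ is infinite-dimensional and its augmentation ideal is \emph{not} nilpotent, every fixed matrix over $\Lambda$ lives over a finite-dimensional Grassmann subalgebra, where nilpotence of the augmentation ideal --- and therefore absolute convergence of $\exp$, existence of $\log$, and invertibility of $\id+(\text{soul})$ --- is automatic. Once this is granted, part (1) is a standard fact about finite-dimensional associative $\C$-algebras and part (2) reduces to the classical connectedness of $\GL_m(\C)\times\GL_\ell(\C)$.
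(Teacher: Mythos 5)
Your proof is correct, and since the paper gives no argument for this lemma beyond citing \cite{SZ} and \cite{LZ6}, the comparison is with those sources: your route --- observing that any fixed element of $\gl(V)$ or $\GL(V)$ has entries in a finite Grassmann subalgebra $\La(N)$ whose augmentation ideal is nilpotent, deducing convergence of $\exp$ and invertibility for (1), and for (2) factoring $g=g_0\cdot\Exp\bigl(\log(g_0^{-1}g)\bigr)$ with the soul handled by nilpotence and the body $g_0$ by connectedness of $\GL_m(\C)\times\GL_\ell(\C)$ --- is essentially the standard body/soul argument those references use. I see no gaps.
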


Now the general linear supergroup $\GL(V)$ acts on $T^r(V)$ by
$
g.w=g w_1\otimes...\otimes g w_r
$
for any $w=w_1\otimes...\otimes w_r$ and $g\in \GL(V)$.
The corresponding $\gl(V)$-action on $V^{\otimes_\Lambda r}$ is defined for all $X\in\gl(V)$ by
\be\label{eq:glvaction}
\begin{aligned}
X.w& = Xw_1\otimes w_2\otimes  \dots \otimes  w_r+ w_1\otimes X w_2\otimes  \dots \otimes  w_r\\
&+\dots + w_1\otimes w_2\otimes  \dots \otimes X w_r.
\end{aligned}
\ee
We denote the associated representations of both $\GL(V)$ and $\fg(V)$ on  $V^{\otimes_\Lambda r}$
by $\rho_r$. The first fundamental theorem for $\GL(V)$ is concerned with $\End_{\GL(V)}(V^{\ot_{\Lambda}r})$.

Define a $\Lambda$-linear action $\varpi_r$
of the symmetric group $\Sym_r$ of degree $r$ on $T^r(V)$ as follows. If $s_i=(i,i+1)$, $1\le i\le r-1$,
are the simple reflections which generate  $\Sym_r$, then for all $i$, we define $\varpi_r(s_i)$ by
\begin{eqnarray}\label{eq:sym}
\varpi_r(s_i): w \mapsto  w_1\otimes\dots \otimes \tau( w_i\otimes w_{i+1})\otimes \dots \otimes  w_r,
\end{eqnarray}
where $w=w_1\ot\dots\ot w_r\in T^r(V)$.
The group ring $\Lambda\Sym_r=\C\Sym_r\otimes_{\C}\Lambda$ is an associative superalgebra,
with $\C\Sym_r$ regarded as purely even. Extend $\Lambda$-linearly  the representation $\varpi_r$ of $\Sym_r$
to obtain an action of the superalgebra $\Lambda\Sym_r$.

It is evident that the actions of $\Lambda\Sym_r$ and  $\GL(V)$ on $T^r(V)$
commute with each other.
Thus we have a homomorphism of $\Z_2$-graded algebras
\be\label{eq:varpi}
\varpi_r:\Lambda\Sym_r\to\End_{\GL(V)}(V^{\ot_{\Lambda} r}),
\ee
where $\End_{\GL(V)}(V^{\otimes_\Lambda r})
= \{\phi\in\End_\Lambda(V^{\otimes_\Lambda r}) \mid g\phi = \phi g, \ \forall g\in \GL(V) \}$.

The following well-known result is the super analogue of Schur-Weyl duality. It is the FFT
for $\GL(V)$. A proof may be found in \cite[Theorem 3.2]{LZ6}, but the result goes back to
the physics literature (see e.g., \cite{DJ, BB}) and the papers \cite{S0, S1, BR}.
\begin{theorem}\label{thm:BR} (FFT for $\GL(V)$) The homomorphism \eqref{eq:varpi} is surjective.
That is, $\End_{\GL(V)}(V^{\otimes_\Lambda r}) = \varpi_r(\Lambda\Sym_r)$.
\end{theorem}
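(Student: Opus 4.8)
The plan is to show that $\varpi_r$ is surjective by the standard "super double centralizer" argument, reducing everything to the purely even case. First I would invoke Lemma~\ref{lem:exp}(2): since the image of $\Exp$ generates $\GL(V)$, an endomorphism $\phi \in \End_\Lambda(V^{\ot_\Lambda r})$ commutes with all of $\GL(V)$ if and only if it commutes with $\rho_r(\exp(X))$ for every $X \in \gl(V)$, which by differentiating (or by expanding the exponential, using that each $\rho_r(X)$ acts locally nilpotently enough on the relevant finite-dimensional pieces) holds if and only if $\phi$ commutes with $\rho_r(X)$ for all $X \in \gl(V)$. Thus $\End_{\GL(V)}(V^{\ot_\Lambda r}) = \End_{\gl(V)}(V^{\ot_\Lambda r})$, and since $\gl(V) = \gl(m|\ell)_{\bar 0}\ot_\C\Lambda_{\bar 0} \oplus \gl(m|\ell)_{\bar 1}\ot_\C\Lambda_{\bar 1}$, a $\Lambda$-linear $\phi$ commuting with this is the same as a $\Lambda$-linear $\phi$ commuting with the $\C$-span of $\gl(m|\ell)$ acting diagonally via \eqref{eq:glvaction}.

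Next I would pass from $\Lambda$-coefficients back to $\C$: because $V = V_\C \ot_\C \Lambda$ and all the actions are obtained by base change, one has a natural identification $\End_\Lambda(V^{\ot_\Lambda r}) \cong \End_\C(V_\C^{\ot_\C r})\ot_\C\Lambda$ (as the relevant $\C$-space is finite-dimensional), compatible with the $\gl(m|\ell)$-actions, so that
\[
\End_{\gl(V)}(V^{\ot_\Lambda r}) \;\cong\; \End_{\gl(m|\ell)}(V_\C^{\ot_\C r})\ot_\C\Lambda .
\]
Likewise $\varpi_r(\Lambda\Sym_r) = \varpi_r(\C\Sym_r)\ot_\C\Lambda$. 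Hence the theorem over $\Lambda$ follows from the corresponding statement over $\C$: the map $\C\Sym_r \to \End_{\gl(m|\ell)}(V_\C^{\ot_\C r})$ is surjective. That statement is precisely the classical result of Sergeev and of Berele--Regev, which I would cite from \cite{S0, S1, BR}; alternatively it can be reproved via the double centralizer theorem applied to the semisimple action of the group $\GL((V_\C)_{\bar 0})\times\GL((V_\C)_{\bar 1})$ together with the extra odd generators, but since the excerpt explicitly permits citing this as "well-known", I would not reprove it.

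The main obstacle is the passage $\End_{\GL(V)} = \End_{\gl(V)}$ and then down to $\C$-coefficients: one must be careful that $\exp$ is only defined on $\gl(V)_{\bar 0}$, so naively differentiating recovers only the even part of $\gl(m|\ell)$, which has centralizer strictly larger than $\varpi_r(\C\Sym_r)$ in general. The resolution is that $\GL(V)$ as defined here consists of \emph{even} automorphisms of the $\Lambda$-module $V$, but conjugation by elements $\exp(X)$ with $X \in \gl(V) = \gl(m|\ell)_{\bar 0}\ot\Lambda_{\bar 0}\oplus\gl(m|\ell)_{\bar 1}\ot\Lambda_{\bar 1}$ already involves the odd part of $\gl(m|\ell)$ tensored against odd elements of $\Lambda$; upon extracting coefficients of the Grassmann generators, commuting with these is exactly equivalent to commuting with the odd generators of $\gl(m|\ell)$ over $\C$. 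Making this bookkeeping precise — that the Grassmann envelope "sees" all of $\gl(m|\ell)$, not just its even part — is the one place where the argument genuinely uses the structure of $\Lambda$ as an infinite-dimensional Grassmann algebra, and it is where I would spend the most care. Everything else is formal base-change and an appeal to Lemma~\ref{lem:exp} and the classical FFT.
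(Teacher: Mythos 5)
Your proposal is sound and follows essentially the same route as the proof the paper relies on: this paper does not reprove Theorem~\ref{thm:BR} but cites \cite[Theorem 3.2]{LZ6}, whose argument likewise uses Lemma~\ref{lem:exp} to pass from $\GL(V)$ to $\gl(V)$, the identification $\End_\Lambda(V^{\ot_\Lambda r})\cong\End_\C(V_\C^{\ot r})\ot_\C\Lambda$ together with the Grassmann-envelope bookkeeping to descend to $\gl(m|\ell)$ over $\C$, and then the Sergeev/Berele--Regev duality \cite{S0,S1,BR}. The one point you flag as delicate (recovering the odd part of $\gl(m|\ell)$ from elements $X\ot\lambda$ with $\lambda$ odd, using infinitely many independent odd generators of $\Lambda$) is indeed the crux, and it works as you describe.
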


\subsection{Invariant theory for $\GL(V)$-the second fundamental theorem} The second fundamental theorem
for $\GL(V)$ describes the kernel of the surjective homomorphism $\varpi_r$ of Theorem \ref{thm:BR}.

Recall that $V=V_\C\ot_\C\Lambda$, where $\sdim(V)=(m|\ell)$. The following result is an easy consequence of
\cite[Theorem 3.20]{BR}.

\begin{theorem}\label{thm:BR2}
If $r\leq m\ell+m+\ell$, then the homomorphism $\varpi_r$ is an isomorphism. If $r>m\ell+m+\ell$,
then the kernel of $\varpi_r$ is the (two-sided) ideal of $\Lambda\Sym_r$ generated by the Young
symmetriser of the partition with $m+1$ rows and $\ell+1$ columns.
\end{theorem}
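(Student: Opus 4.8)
The plan is to deduce \thmref{thm:BR2} from the classical (non-super) second fundamental theorem for $\GL$ of a vector space, exactly as \cite[Theorem 3.20]{BR} is phrased, by transferring the statement from the category of superalgebras back to ordinary commutative algebra via the Grassmann envelope. First I would recall that $\End_{\GL(V)}(V^{\ot_\Lambda r})$ is the $\Lambda$-form of the $\gl(m|\ell)$-invariants, and that $\varpi_r$ factors through the group algebra $\C\Sym_r$ by $\Lambda$-linear extension; hence it suffices to understand the $\C$-linear map $\C\Sym_r\to\End_{\gl(m|\ell)}(V_\C^{\ot r})$ and tensor up with $\Lambda$. The Grassmann-envelope functor is exact and faithful, so the kernel of the $\Lambda$-linear map is $\Lambda\otimes_\C(\text{kernel of the }\C\text{-linear map})$; this reduces everything to the purely combinatorial question over $\C$, which is precisely Berele--Regev's result.

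Next I would invoke the structure of the representation category of $\Sym_r$: by Schur--Weyl--Sergeev theory the image of a Young symmetriser $c_\mu$ in $\End(V_\C^{\ot r})$ is nonzero if and only if the Young diagram $\mu$ fits inside the $(m,\ell)$-``hook'', i.e.\ $\mu$ has no cell in row $m+1$ and column $\ell+1$ simultaneously — equivalently $\mu_{m+1}\le \ell$. Thus $\varpi_r$ kills $c_\mu$ exactly for those $\mu$ with $\mu_{m+1}\ge \ell+1$, the smallest such being the rectangle $R$ with $m+1$ rows and $\ell+1$ columns; and any partition properly containing $R$ has its associated ideal contained in the ideal generated by $c_R$ (because $c_\mu\in\C\Sym_r c_R\C\Sym_r$ when $R\subseteq\mu$, a standard fact about Young symmetrisers and domination). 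Summing the matrix blocks, $\Ker\varpi_r$ over $\C$ is exactly the two-sided ideal generated by $c_R$. The numerical bound $r\le m\ell+m+\ell$ is just the statement that no partition of such $r$ can contain $R$ (since $|R|=(m+1)(\ell+1)=m\ell+m+\ell+1$), so for $r$ in that range $\varpi_r$ is injective, hence an isomorphism by \thmref{thm:BR}; for $r>m\ell+m+\ell$ the rectangle $R$ (padded with a single extra row or handled via induction on $r$) fits, and one checks the ideal it generates is all of $\Ker\varpi_r$.

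Then I would reinstate the Grassmann variables: apply $-\otimes_\C\Lambda$ throughout. Since $\Lambda$ is flat (indeed free) over $\C$, $\Ker\varpi_r=\Lambda\otimes_\C\Ker(\varpi_r|_{\C\Sym_r})$, and the two-sided ideal of $\Lambda\Sym_r$ generated by $c_R$ is $\Lambda\otimes_\C(\C\Sym_r c_R\C\Sym_r)$; these coincide, giving the theorem. The case distinction on $r$ and the ``isomorphism'' conclusion carry over verbatim because both source and target are obtained by base change from their $\C$-forms and $\varpi_r$ is already surjective by \thmref{thm:BR}.

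The main obstacle I anticipate is the book-keeping needed to see that \cite[Theorem 3.20]{BR} — which is stated for the superalgebra $\gl(m|\ell)$ acting on $V_\C^{\ot r}$ — translates cleanly into a statement about $\GL(V)$ over $\Lambda$ without losing or gaining relations; one must check that $\End_{\GL(V)}(V^{\ot_\Lambda r})$ really equals $\End_{\gl(m|\ell)}(V_\C^{\ot r})\otimes_\C\Lambda$ (not something larger), which uses \lemref{lem:exp}(2) that $\Exp(\gl(V))$ generates $\GL(V)$, so that $\GL(V)$-invariance is equivalent to $\gl(V)$-invariance, and the latter is $\Lambda_{\bar0}$-linearly controlled by $\gl(m|\ell)$-invariance over $\C$. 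Once that identification is in place the rest is a formal consequence of the classical result and flatness of $\Lambda$.
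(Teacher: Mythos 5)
Your proposal is correct and follows essentially the same route as the paper, which simply records the theorem as "an easy consequence of \cite[Theorem 3.20]{BR}": you reduce to the Berele--Regev hook condition over $\C$, identify the two-sided ideal generated by the rectangular Young symmetriser with $\oplus_{\mu\supseteq R}I(\mu)$, and transfer back via freeness of $\Lambda$ over $\C$ together with \thmref{thm:BR} for surjectivity. The details you supply (including the identification $\End_{\GL(V)}(V^{\ot_\Lambda r})=\End_{\gl(m|\ell)}(V_\C^{\ot r})\ot_\C\Lambda$ via \lemref{lem:exp}) are exactly the ones the paper leaves implicit.
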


The kernel is therefore generated by an idempotent which is explicitly described as follows. Consider the
$(m+1)\times(\ell+1)$ array of integers below, which form a standard tableau.
\[
\begin{matrix}
1  &  2  &  \dots  &  \ell+1\\
\ell+2  & \ell+3  & \dots & 2\ell+2\\
\dots      &   \dots       &    \dots  &  \dots   \\
\dots      &   \dots       &    \dots  &  \dots   \\
m\ell+m+1     &   m\ell+m+2        &    \dots  &  m\ell+m+\ell+1    \\
\end{matrix}
\]

\noindent Let $R$ and $C$ be the subgroups of $\Sym_{m\ell+m+\ell+1}$ (regarded as the subgroup of $\Sym_r$
which permutes the first $m\ell+m+\ell+1$ numbers) which stabilise the rows and columns of the array respectively.
Thus
$$
\begin{aligned}
R=\Sym\{1,2,&\dots,\ell+1\}\times\Sym\{ \ell+2  , \ell+3  , \dots , 2\ell+2\}\times\dots\\
&\dots\times\Sym\{m\ell+m+1,   m\ell+m+2,    \dots,  m\ell+m+\ell+1\},\\
\end{aligned}
$$
while
$$
\begin{aligned}
C=\Sym\{1,\ell+2,&\dots,m\ell+m+1\}
\times\Sym\{ 2  , \ell+3  , \dots , m\ell+m+2\}\times\dots\\
&\dots\times\Sym\{\ell+1,   2\ell+2,    \dots,  m\ell+m+\ell+1\},\\
\end{aligned}
$$
where $\Sym\{X\}$ denotes the group of permutations of the set $X$.

Then in the group ring $\Lambda\Sym_{m\ell+m+\ell+1}\subseteq \Lambda\Sym_r$, let $e=e(m,\ell)$ be the (even)
element defined by
\be\label{eq:eml}
e(m,\ell)=\left(\sum_{\pi\in R}\pi\right)\left(\sum_{\sigma\in C}\ve(\sigma)\sigma\right)=\alpha^+(R)\alpha^-(C),
\ee
where $\ve$ is the sign character of $\Sym_r$, and for any subset $H\subseteq \Sym_r$, we write $\alpha^+(H)$ (resp. $\alpha^-(H)$)
for the element $\sum_{h\in H}h$ (resp. $\sum_{h\in H}\ve(h)h$) of $\C\Sym_r\subseteq \La\Sym_r$.

It is known that $(|R|!|C|!)\inv e(m,\ell)$ is a primitive idempotent in $\Lambda\Sym_{m\ell+m+\ell+1}$.
It is also well known that $\Lambda\Sym_r=\oplus_{\mu}I(\mu)$, where $\mu$ runs over the partitions of $r$,
and $I(\mu)$ is a simple ideal of $\Lambda\Sym_r$ for each $\mu$. In this notation, the ideal of $\Lambda\Sym_r$
which is generated by $e(m,\ell)$ is the sum of the $I(\mu)$ over those partitions $\mu$ which
contain an $(m+1)\times(\ell+1)$ rectangle.

\begin{corollary}\label{cor:sftgl}
If $r\leq m\ell+m+\ell$ then $\Ker(\varpi_r)=0$. Otherwise,
$\Ker(\varpi_r)=\oplus_\mu I(\mu)$ over those partitions $\mu$ of $r$ which contain
a rectangle of size $(m+1)\times(\ell+1)$.
\end{corollary}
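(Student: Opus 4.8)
The plan is to obtain the statement as a direct consequence of \thmref{thm:BR2} together with the Wedderburn-type decomposition $\La\Sym_r=\oplus_\mu I(\mu)$ recalled above. If $r\le m\ell+m+\ell$, then \thmref{thm:BR2} asserts that $\varpi_r$ is an isomorphism, so $\Ker(\varpi_r)=0$ and there is nothing further to prove. Hence I would concentrate on the case $r>m\ell+m+\ell$; in that range \thmref{thm:BR2} identifies $\Ker(\varpi_r)$ with the two-sided ideal of $\La\Sym_r$ generated by the Young symmetriser $e=e(m,\ell)$, so the whole task reduces to showing that this ideal equals $\oplus_\mu I(\mu)$, the sum being over the partitions $\mu$ of $r$ that contain an $(m+1)\times(\ell+1)$ rectangle.

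For that last point I would argue as follows. Write $k=m\ell+m+\ell+1=(m+1)(\ell+1)$ and let $\lambda$ be the rectangular partition of $k$ with $m+1$ rows and $\ell+1$ columns, so that $R$ and $C$ are its row- and column-stabilisers. Since $\C\Sym_k$ is semisimple and $(|R|!|C|!)\inv e$ is a primitive idempotent there, the left ideal $\C\Sym_k\cdot e$ is isomorphic to the Specht module $S^\lambda$ associated to $\lambda$. A standard computation then yields, for any $\C\Sym_r$-module $M$, a $\C$-linear isomorphism $eM\cong\Hom_{\C\Sym_k}(S^\lambda,\Res^{\Sym_r}_{\Sym_k}M)$, where $\Sym_k$ is embedded in $\Sym_r$ in the usual way. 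Applying this with $M$ running over the simple modules $S^\mu$, $\mu\vdash r$, the image of $e$ in the simple block of $\C\Sym_r$ indexed by $\mu$ is nonzero precisely when $S^\lambda$ occurs in $\Res^{\Sym_r}_{\Sym_k}S^\mu$; by the iterated branching rule (equivalently the Littlewood--Richardson rule) this happens exactly when $\lambda\subseteq\mu$ as Young diagrams, i.e.\ exactly when $\mu$ contains an $(m+1)\times(\ell+1)$ rectangle. Since a two-sided ideal of the semisimple algebra $\C\Sym_r$ is the sum of the simple blocks it meets, the two-sided ideal generated by $e$ is the sum of the blocks indexed by such $\mu$. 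Extending scalars along $\C\hookrightarrow\La$ and using $\La\Sym_r=\C\Sym_r\ot_\C\La$ together with the description of $I(\mu)$ as the $\La$-span of the corresponding simple block, one recovers the asserted description of the ideal generated by $e(m,\ell)$, and combined with \thmref{thm:BR2} this gives the statement.

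I do not anticipate a serious obstacle here: the only substantive input is \thmref{thm:BR2}, which is quoted from \cite{BR}, and everything else is the classical combinatorics of Young symmetrisers. The one point that needs a little care is the passage from $\C$- to $\La$-coefficients — one must know that the decomposition of $\C\Sym_r$ into simple two-sided ideals is preserved under $-\ot_\C\La$, so that every two-sided ideal of $\La\Sym_r$ is the $\La$-linear span of a two-sided ideal of $\C\Sym_r$. This is exactly the decomposition $\La\Sym_r=\oplus_\mu I(\mu)$ recalled just before the statement, and it holds because $\C\Sym_r$ is separable over $\C$; so in the end the corollary is a formal consequence of \thmref{thm:BR2} and the structure theory already set up.
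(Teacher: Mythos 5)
Your proposal is correct and takes essentially the same route as the paper: the corollary is deduced from \thmref{thm:BR2} together with the block decomposition $\La\Sym_r=\oplus_\mu I(\mu)$, the only substantive point being that the two-sided ideal generated by $e(m,\ell)$ is the sum of the $I(\mu)$ with $\mu$ containing the $(m+1)\times(\ell+1)$ rectangle, which the paper simply quotes as well known and which your Specht-module/branching-rule computation proves in the standard way. (Your closing claim that \emph{every} two-sided ideal of $\La\Sym_r$ is the $\La$-span of one of $\C\Sym_r$ is an unnecessary overstatement -- e.g.\ $I(\mu)\mathfrak{m}$ for a proper ideal $\mathfrak{m}\subset\La$ is a counterexample -- but all you actually use is that the ideal generated by $e(m,\ell)\in\C\Sym_r$ is the $\La$-span of the corresponding $\C$-ideal, which is true since $\La$ is central.)
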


\section{Invariant theory for the orthosymplectic supergroup}

In this section we take $\ell=2n$ and $G$ to be the orthosymplectic group $G=\OSp(V)\subseteq\GL(V)$
as in Definition \ref{def:sg}. Clearly $G$ acts on $T^r(V)$, and $\End_G(T^r(V))$ is a superalgebra
which contains $\End_{\GL(V)}(T^r(V))$. We aim to describe this algebra.

\subsection{The first fundamental theorem for $\OSp(V)$-first formulation} We need elements of $\End_G(T^r(V))$
which are not in $\End_{\GL(V)}(T^r(V))$. Suppose $r=2$ and consider the following element $c_0\in V\ot V$.
Let $(e_a),(e_a^*)$ ($a=1,2,\dots,m+2n$) be a pair of dual $\C$-bases of $V_\C$ in the sense that $(e_a^*,e_b)=\delta_{ab}$
(the Kronecker delta) for all $a,b$. The element $c_0=\sum_a e_a\ot e_a^*\in V\ot V$ is independent of the
basis chosen, and is $G$-invariant. Define $\gamma\in\End_G(V^{\ot 2})$ by $\gamma(v\ot w)=(v,w)c_0$.
This permits us to define elements $\gamma_i\in\End_G(V^{\ot r})$ ($i=1,\dots,r-1$) by
$\gamma_i=\underbrace{\id_V\ot\dots\ot\id_V}_{i-1}
\ot\gamma\ot\underbrace{\id_V\ot\dots\ot\id_V}_{r-i-1}$, with $\gamma$ acting on the $i,i+1$ factors.

The following result is equivalent to \cite[Cor. 5.7]{LZ6}.

\begin{theorem}\label{thm:fft-osp1}
The superalgebra $\End_G(T^r(V))$ is generated by the image of the homomorphism $\varpi_r$
of \eqref{eq:varpi}, together with $\gamma_1,\dots,\gamma_{r-1}$.
\end{theorem}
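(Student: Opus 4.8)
The plan is to deduce the statement from the known duality between the orthosymplectic supergroup and the Brauer algebra, which is the content of \cite[Cor. 5.7]{LZ6}, by explaining the passage from that corollary to the present formulation in terms of the generators $\varpi_r(\Lambda\Sym_r)$ and the $\gamma_i$. Recall that the Brauer algebra $B_r(m-2n)$ is generated by the transposition-type elements $s_i$ ($1\le i\le r-1$) together with the ``cup-cap'' elements, often denoted $e_i$; under the action map $B_r(m-2n)\lr\End_G(T^r(V))$ of {\it op.~cit.}, the $s_i$ go to $\varpi_r(s_i)$ (this is immediate from the definition \eqref{eq:sym}), and the $e_i$ go, up to an inessential normalisation, to the operators $\gamma_i$ defined above. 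Thus the first step is to verify that $\gamma_i$ is indeed the image of the $i$-th Brauer generator: unravelling the definition, $\gamma(v\ot w)=(v,w)\,c_0=(v,w)\sum_a e_a\ot e_a^*$, which is precisely the composite ``contract with the form, then insert the canonical copadjoint tensor $c_0$'', i.e. the $e_i$ of the Brauer algebra acting on the $i,i+1$ factors. One should record here that $c_0$ is $G$-invariant and that $\gamma$ (hence each $\gamma_i$) commutes with the $G$-action, which follows from $G$-invariance of the form $(-,-)$ and of $c_0$; both are noted in the excerpt.

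Granting this identification, \cite[Cor. 5.7]{LZ6}, which asserts surjectivity of $B_r(m-2n)\lr\End_G(T^r(V))$, says precisely that $\End_G(T^r(V))$ is generated as a (super)algebra by the images of the $s_i$ and the $e_i$. Since these images are exactly $\varpi_r(s_1),\dots,\varpi_r(s_{r-1})$ and $\gamma_1,\dots,\gamma_{r-1}$, and since the $\varpi_r(s_i)$ generate $\varpi_r(\C\Sym_r)$ as an algebra — and then $\varpi_r(\Lambda\Sym_r)=\varpi_r(\C\Sym_r)\ot_\C\Lambda$ since $\varpi_r$ is $\Lambda$-linear and $\Lambda$ acts centrally — we conclude that $\End_G(T^r(V))$ is generated by $\varpi_r(\Lambda\Sym_r)$ together with $\gamma_1,\dots,\gamma_{r-1}$, which is the assertion. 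The only subtlety in this last bookkeeping is the scalar: depending on the normalisation of the Brauer generators in \cite{LZ6}, the image of $e_i$ may be $\gamma_i$ or a nonzero scalar multiple thereof; either way it generates the same subalgebra, so the statement is unaffected, and it is worth a sentence to flag this.

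The main obstacle, such as it is, is not a hard step but a matter of translation: one must be careful that the Schur--Weyl--Brauer duality of \cite{LZ6} is stated over the Grassmann algebra $\Lambda$ with all the $\Z_2$-graded signs in place, so that the commuting-action claim for $\gamma$ and for $\varpi_r(s_i)$ is genuinely with the $G=\OSp(V)$ of Definition \ref{def:sg} (an honest group of $\Lambda$-linear even automorphisms), not merely with the Harish-Chandra pair $(\OSp(V)_0,\osp(V_\C))$. This compatibility is exactly what was set up in the ``Linear superalgebra'' and ``orthosymplectic supergroup'' subsections above, together with \lemref{lem:exp}, which guarantees that invariance under the group and under the Lie superalgebra agree on tensor space. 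With that in hand the proof is a direct citation of \cite[Cor. 5.7]{LZ6} combined with the elementary identification of generators carried out in the first paragraph.
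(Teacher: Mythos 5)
Your proposal is correct and follows essentially the same route as the paper: the paper gives no independent argument, but simply asserts that the theorem is equivalent to \cite[Cor.~5.7]{LZ6}, with the identification $s_i\mapsto\varpi_r(s_i)$, $e_i\mapsto\gamma_i$ made explicit in Corollary \ref{cor:fft-ospbr}, which is exactly the translation you carry out. Your extra remarks on normalisation of the $e_i$ and on $\Lambda$-linearity of $\varpi_r$ are harmless elaborations of the same argument (though the appeal to Lemma \ref{lem:exp} is tangential rather than needed).
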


This statement may be reformulated in terms of the Brauer algebra (cf. \cite[\S 5.2]{LZ6}).
\begin{corollary}\label{cor:fft-ospbr}
Let $B_r(m-2n)$ be the Brauer algebra on $r$ strings, with parameter $m-2n$, with generators
$s_i,e_i$ ($i=1,\dots,r-1$ (see \cite[\S 5.2]{LZ6}). Then there is a surjective homomorphism
$B_r(m-2n)\lr\End_G(T^r(V))$, such that $s_i\mapsto\varpi_r(s_i)$ and $e_i\mapsto \gamma_i$
for all $i$.
\end{corollary}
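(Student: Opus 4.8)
\textbf{Proof proposal for Corollary~\ref{cor:fft-ospbr}.}
The plan is to deduce the corollary directly from \thmref{thm:fft-osp1} by checking that the assignment $s_i\mapsto\varpi_r(s_i)$, $e_i\mapsto\gamma_i$ extends to a well-defined algebra homomorphism $B_r(m-2n)\lr\End_G(T^r(V))$, and then noting that surjectivity is immediate from \thmref{thm:fft-osp1}, since the image contains the generating set described there. So the only substantive point is well-definedness, i.e.\ that the operators $\varpi_r(s_i)$ and $\gamma_i$ on $T^r(V)$ satisfy the defining relations of the Brauer algebra $B_r(m-2n)$.

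First I would recall the presentation of $B_r(\delta)$ with $\delta=m-2n$: the generators $s_1,\dots,s_{r-1}$ satisfy the Coxeter relations of $\Sym_r$ ($s_i^2=1$, $s_is_j=s_js_i$ for $|i-j|>1$, $s_is_{i+1}s_i=s_{i+1}s_is_{i+1}$); the generators $e_1,\dots,e_{r-1}$ satisfy $e_i^2=\delta e_i$, $e_ie_j=e_je_i$ for $|i-j|>1$, $e_ie_{i\pm1}e_i=e_i$; and the mixed relations $s_ie_i=e_is_i=e_i$, $s_ie_j=e_js_i$ for $|i-j|>1$, and $s_ie_{i+1}e_i=s_{i+1}e_i$, $e_{i+1}e_is_{i+1}=e_{i+1}s_i$ (equivalently $e_is_{i+1}e_i=e_i$, $e_{i+1}s_ie_{i+1}=e_{i+1}$). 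The Coxeter relations hold because $\varpi_r$ is already a homomorphism from $\Sym_r$ by construction. The relations involving only the $e_i$ reduce, after localizing at adjacent tensor factors, to identities about $\gamma$ and the contraction $c_0$: the crucial one is $\gamma^2=(m-2n)\gamma$ on $V^{\ot 2}$, which follows from the computation $(c_0,-\ot-)$-type pairing, namely $\sum_a(e_a,e_a^*)$ acting as the scalar $\sdim V=m-2n$ (the signs from the skew part of the form on $V_{\bar1}$ are exactly what produces $-2n$ rather than $+2n$). The far-commutativity relations are trivial since the operators act on disjoint tensor factors, and the relations $\gamma_i\gamma_{i\pm1}\gamma_i=\gamma_i$, $\varpi_r(s_i)\gamma_i=\gamma_i\varpi_r(s_i)=\gamma_i$, $\gamma_i\varpi_r(s_{i+1})\gamma_i=\gamma_i$ are local identities on $V^{\ot 3}$ that one verifies by evaluating both sides on $u\ot v\ot w$ using the definitions of $\gamma$ and $\varpi_r(s_i)$ together with the supersymmetry $(v,w)=(-1)^{[v][w]}(w,v)$ of the form.

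Rather than re-doing these by hand, I would instead invoke that these are precisely the relations verified in \cite{LZ6} when the Brauer-algebra action was set up (the operators $\gamma_i$ here are the $e_i$-operators of \emph{op.\ cit.}); the present corollary is stated there as \cite[\S5.2]{LZ6} to be equivalent to \cite[Cor.~5.7]{LZ6}. Concretely: the pair $(\varpi_r,\{\gamma_i\})$ was shown there to define an action of $B_r(m-2n)$ on $T^r(V)$ by $\Lambda$-linear, $G$-equivariant endomorphisms, which is exactly the statement that the assignment $s_i\mapsto\varpi_r(s_i)$, $e_i\mapsto\gamma_i$ is an algebra homomorphism $B_r(m-2n)\lr\End_G(T^r(V))$. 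Surjectivity is then \thmref{thm:fft-osp1} verbatim, since $\End_G(T^r(V))$ is generated by $\im(\varpi_r)$ together with $\gamma_1,\dots,\gamma_{r-1}$, all of which lie in the image.

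The main obstacle, such as it is, is bookkeeping of Koszul signs: the identities $\gamma^2=(m-2n)\gamma$ and $\gamma_i\varpi_r(s_{i+1})\gamma_i=\gamma_i$ are sign-sensitive, and one must be careful that $c_0=\sum_a e_a\ot e_a^*$ is genuinely basis-independent and that the form's skew-symmetry on the odd part contributes the correct sign to yield the Brauer parameter $m-2n$ (not $m+2n$). Once that sign computation is pinned down — and it was already pinned down in \cite{LZ6} — there is no further difficulty, and the corollary follows formally.
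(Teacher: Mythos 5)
Your proposal is correct and follows essentially the same route as the paper: the paper treats the corollary as a reformulation of Theorem~\ref{thm:fft-osp1}, with the fact that $\varpi_r(s_i)$ and $\gamma_i$ satisfy the Brauer relations for parameter $m-2n$ taken from \cite[\S 5.2, Cor.~5.7]{LZ6}, exactly as you invoke, and surjectivity then being Theorem~\ref{thm:fft-osp1} verbatim. Your sign discussion of $\gamma^2=(m-2n)\gamma$ via $\sum_a(e_a,e_a^*)=m-2n$ matches the computation underlying \emph{op.~cit.}
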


\subsection{The first fundamental theorem-second formulation} The formulations given above of the first fundamental theorem
are statements about homomorphisms of non-commutative associative algebras. There is a second formulation
in terms of multilinear maps, which is equivalent to the first, but which will be more convenient for our purpose here.
Recall that $V^*=\Hom_\Lambda(V,\Lambda)$ is a free $\Lambda$-module of superdimension $(m|\ell)$. The group
$\GL(V)$ acts on $V^*$ via $g\phi(v):=\phi(g\inv v)$ (for $\phi\in V^*$, $g\in\GL(V)$ and $v\in V$).
For the discussion of the orthosymplectic case, we shall require that $\ell=2n$ is even.

\subsubsection{The case of $\GL(V)$} For the moment, we again take $\sdim(V_\C)=(m|\ell)$. To relate the two formulations, we
shall need the canonical isomorphism $\xi:V\ot V^*\lr\End_\La(V)$,
given by $\xi(v\ot\phi)(w)=v\phi(w)$. This map respects the $\GL(V)$ action, where $\GL(V)$ acts on $\End_\La(V)$
by conjugation. Consider the $\La$-module $\left(T^r(V^*)\ot_\La T^s(V)\right)^*$; this has an obvious $\GL(V)$
action, and if $r=s$, we have among the $\GL(V)$-invariant elements the functions $\delta_\pi$ ($\pi\in\Sym_r$),
defined in \eqref{eq:defdelta} below. Up to sign, we have
\be\label{eq:dpi}
\delta_\pi(\phi_1\ot\dots\ot\phi_r\ot v_1\ot\dots\ot v_r)=\pm\phi_1(v_{\pi 1})\phi_2(v_{\pi 2})\dots\phi_r(v_{\pi r}).
\ee

\begin{theorem}\label{thm:fft-gl2} With notation as above we have
\begin{eqnarray}
{\left(T^r(V^*)\ot_\La T^s(V)\right)^*}^{\GL(V)}=
\left\{
\begin{array}{c l}
0, &\text{ if }r\neq s\\
\sum\limits_{\pi\in\Sym_r}\La\delta_\pi, &\text{ if } r=s.
\end{array}
\right.
\end{eqnarray}
\end{theorem}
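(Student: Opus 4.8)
The plan is to deduce \thmref{thm:fft-gl2} from the algebra-level FFT, \thmref{thm:BR}, by transporting the statement through the canonical isomorphism $\xi:V\ot V^*\lr\End_\La(V)$ and its iterated version. First I would set up the chain of $\GL(V)$-equivariant $\La$-module isomorphisms
\[
\left(T^r(V^*)\ot_\La T^s(V)\right)^*\;\cong\;T^r(V)\ot_\La T^s(V^*)\;\cong\;\Hom_\La\!\left(T^s(V),T^r(V)\right),
\]
the first coming from the natural pairing (using that $V$ is free of finite rank over $\La$, so $V^{**}\cong V$), and the second from $r,s$-fold application of $\xi$ together with the identification $T^r(V)\ot T^s(V^*)\cong \Hom_\La(T^s(V),T^r(V))$. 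One must check each of these is $\GL(V)$-equivariant for the stated actions: on $V^*$ via $g\phi=\phi\circ g\inv$, on $\Hom_\La$ by conjugation $\phi\mapsto g\phi g\inv$. Taking $\GL(V)$-invariants on the right-hand side gives exactly $\Hom_{\GL(V)}(T^s(V),T^r(V))$.

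Next I would invoke \thmref{thm:BR}: when $r=s$ this space equals $\varpi_r(\La\Sym_r)=\sum_{\pi\in\Sym_r}\La\,\varpi_r(\pi)$. Tracing the image of $\varpi_r(\pi)$ back through the isomorphism chain, one checks it becomes (a scalar multiple of) the functional described by \eqref{eq:dpi}: the operator $\varpi_r(\pi)$, viewed as an element of $T^r(V)\ot T^r(V^*)$, is $\sum (\text{basis permutation})$, whose pairing against $\phi_1\ot\dots\ot\phi_r\ot v_1\ot\dots\ot v_r$ produces $\pm\phi_1(v_{\pi1})\cdots\phi_r(v_{\pi r})$, with the sign governed by the Koszul rule as encoded in \eqref{eq:tau}. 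This identifies $\sum_\pi\La\delta_\pi$ with the full invariant space. For $r\neq s$ one simply observes $\Hom_{\GL(V)}(T^s(V),T^r(V))=0$; this again follows from \thmref{thm:BR}, since that theorem (more precisely the semisimplicity/weight-space decomposition underlying it, or directly the fact that the algebra of all $\GL(V)$-invariant maps between mixed tensor powers is spanned by diagrams preserving the number of factors) forces Hom-spaces between tensor powers of different degrees to vanish — alternatively, act by scalars $\la I_V\in\GL(V)$ (or rather by the one-parameter subgroup $\exp(tI_V)$ via \lemref{lem:exp}) and compare eigenvalues $t^r$ versus $t^s$.

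The main obstacle I anticipate is bookkeeping of signs: the isomorphisms $V\ot V^*\cong\End_\La(V)$, the pairing $V^{**}\cong V$, and the reordering of tensor factors all introduce Koszul signs $(-1)^{[\,\cdot\,][\,\cdot\,]}$, and one must verify that these combine so that the image of $\varpi_r(\pi)$ is precisely a $\La$-multiple of $\delta_\pi$ (with the sign that makes \eqref{eq:dpi} consistent), and in particular that the $\delta_\pi$ are genuinely $\GL(V)$-invariant and $\La$-linearly independent. The linear independence is needed to conclude that the sum $\sum_\pi\La\delta_\pi$ has the expected size; this can be seen by evaluating on suitable decomposable tensors built from a homogeneous basis of $V_\C$, or by transporting the known linear independence of the $\varpi_r(\pi)$ in $\End_{\GL(V)}(V^{\ot r})$ for $r$ small (and in general noting the statement only claims a spanning set, not a basis). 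Once the equivariance of the canonical maps is nailed down, the rest is a direct translation of \thmref{thm:BR}.
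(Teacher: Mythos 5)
Your proposal is correct and is essentially the paper's own argument: the paper transports Theorem \ref{thm:BR} through the canonical $\GL(V)$-equivariant isomorphisms $e^{(r)}$ and $f^{(r)}$ of the commutative diagram \eqref{eq:diag1}, and the sign bookkeeping you flag is exactly what Lemmas \ref{lem:vpaction}, \ref{lem:epi} and \ref{lem:epie} carry out in identifying the image of $\varpi_r(\pi)$ with $\delta_\pi$. Your scalar-action argument for the vanishing when $r\neq s$ is the standard supplement (the paper leaves that case implicit), and since the theorem only claims that the $\delta_\pi$ span, the up-to-sign identification suffices.
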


\subsubsection{Equivalence of the two formulations}
The equivalence between Theorems \ref{thm:fft-gl2} and \ref{thm:BR} is easily deduced from the following commutative
diagram, in which all maps respect the action of $\GL(V)$. The key is to identify the image of $\pi\in\Sym_r$ under $\alpha_r$
\begin{eqnarray}\label{eq:diag1}
\begin{aligned}
\xymatrix{
{\La\Sym_r}\ar[d]^{\varpi_r}\ar[r]^{\alpha_r\phantom{XXXXX}}&\left(T^r(V^*)\ot_\La T^r(V)\right)^* \\
\End_\La(T^r(V))&\ar[l]_{f^{(r)}}T^r(V)\ot T^r(V^*)\ar[u]^{e^{(r)}}_\wr.
}
\end{aligned}
\end{eqnarray}


To describe these maps explicitly, the following notation is useful.
If $\ul{a}, \ul{b}\in(\Z/2\Z)^r$, define
$J(\ul{a},\ul{b})\in\Z/2\Z$ by $J(\ul {a},\ul {b})=a_r(b_1+\dots+b_{r-1})+a_{r-1}(b_1+\dots+b_{r-2})+\dots+a_2b_1$.
We then also have $J(\ul{a},\ul{b})=b_1(a_2+\dots+a_r)+b_2(a_3+\dots+a_r)+\dots+b_{r-1}a_r=\sum_{i>j}a_ib_j$.

We shall apply this notation as follows. If $\ul v=v_1\ot\dots\ot v_r\in T^r(V)$ or
$\ul\phi=\phi_1\ot\dots\ot\phi_r\in T^r(V^*)$, we write $p(\ul v)$ for the parity sequence
$([v_1],[v_2],\dots,[v_r])\in(\Z/2\Z)^r$, and similarly for $p(\ul\phi)$.

The maps $e^{(r)}$ and $f^{(r)}$ are canonical $\GL(V)$-isomorphisms of graded $\La$-modules.
They are defined as follows. Let $\ul v=v_1\ot\dots\ot v_r\in T^r(V)$ and $\ul\phi=\phi_1\ot\dots\ot\phi_r\in T^r(V^*)$.
Then
$$
f^{(r)}(\ul v\ot\ul\phi): w_1\ot\dots\ot w_r\mapsto (-1)^{J(p(\ul\phi), p(\ul w))}\ul v \phi_1(w_1)\dots \phi_r(w_r).
$$

Similarly, for $\ul v,\ul w\in T^r(V)$ and $\ul\phi,\ul\psi\in T^r(V^*)$, we have
$$
e^{(r)}(\ul v\ot\ul \phi):\ul\psi\ot\ul w
\mapsto (-1)^{c(\ul v,\ul\phi,\ul \psi,\ul w)}
\psi_1(v_1)\dots\psi_r(v_r)\phi_1(w_1)\dots \phi_r(w_r),
$$
where $c(\ul v,\ul\phi,\ul \psi,\ul w)= [\ul\psi]([\ul v]+[\ul\phi])+J(p(\ul\psi),p(\ul v))+J(p(\ul\phi),p(\ul w))$.

Our goal is to identify $\alpha_r(\pi)$, where $\pi\in\Sym_n$. To do this, we shall identify $\varpi_r(\pi)$ and
use the isomorphisms above. For the former, we need the following definition.
\begin{definition}\label{def:n}
For $\ul v=v_1\ot\dots\ot v_r\in T^r(V)$ and $\sigma\in\Sym_r$, define
$n(\sigma,\ul v):=\sum_{(i,j)\in N(\sigma)}[v_i][v_j]\in\Z/2\Z$, where
$N(\sigma)=\{(i,j)\mid 1\leq i<j\leq r,\;\;\sigma(i)>\sigma(j)\}$.
\end{definition}

We then have
\begin{lemma}\label{lem:vpaction}
For $\pi\in\Sym_r$, the element $\varpi_r(\pi)\in\End_\La(T^r(V))$ takes
$\ul v=v_1\ot\dots\ot v_r$ to $(-1)^{n(\pi\inv,\ul v)}v_{\pi\inv 1}\ot\dots v_{\pi\inv r}$.
\end{lemma}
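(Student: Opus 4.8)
The plan is to verify the formula for $\varpi_r(\pi)$ by reducing to the case of simple reflections, where the action is given directly by \eqref{eq:sym}, and then using multiplicativity of the (projective) representation $\varpi_r$ together with the cocycle identity satisfied by the sign factor $n(\cdot,\ul v)$. First I would record the base case: for $\pi = s_i$, Definition~\ref{def:n} gives $N(s_i) = \{(i,i+1)\}$, so $n(s_i\inv,\ul v) = n(s_i,\ul v) = [v_i][v_{i+1}]$, and \eqref{eq:sym} together with \eqref{eq:tau} shows that $\varpi_r(s_i)$ sends $\ul v$ to $(-1)^{[v_i][v_{i+1}]}\,v_1\ot\dots\ot v_{i+1}\ot v_i\ot\dots\ot v_r$, which is exactly $(-1)^{n(s_i\inv,\ul v)}v_{s_i\inv 1}\ot\dots\ot v_{s_i\inv r}$ since $s_i$ is an involution. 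So the formula holds on generators.

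Next I would set up the induction. Write $\pi = s_{i_1}\cdots s_{i_k}$ as a reduced (or any) word; since $\varpi_r$ is an algebra homomorphism on $\C\Sym_r$, we have $\varpi_r(\pi) = \varpi_r(s_{i_1})\cdots\varpi_r(s_{i_k})$, and it suffices to show that if the claimed formula holds for some $\sigma$ then it holds for $\sigma s_i$ (multiplication on the right, which corresponds to left multiplication of the inverse, $\sigma\inv \mapsto s_i\sigma\inv$, and hence to composing the permutation of the tensor factors). Applying $\varpi_r(s_i)$ to $(-1)^{n(\sigma\inv,\ul v)}v_{\sigma\inv 1}\ot\dots\ot v_{\sigma\inv r}$ produces, by the base case applied to the reindexed tensor, a sign $(-1)^{[v_{\sigma\inv i}][v_{\sigma\inv(i+1)}]}$ times the tensor with those two factors swapped, i.e.\ the tensor indexed by $(\sigma s_i)\inv = s_i\sigma\inv$ read off in order. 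So the whole thing becomes $(-1)^{n(\sigma\inv,\ul v) + [v_{\sigma\inv i}][v_{\sigma\inv(i+1)}]}$ times $v_{(\sigma s_i)\inv 1}\ot\dots\ot v_{(\sigma s_i)\inv r}$, and the induction closes provided
\[
n((\sigma s_i)\inv,\ul v) \equiv n(\sigma\inv,\ul v) + [v_{\sigma\inv i}][v_{\sigma\inv (i+1)}] \pmod 2.
\]

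The main obstacle is thus the purely combinatorial identity on the inversion-parity counts. I would prove it by analysing how the inversion set $N(\tau)$ changes under $\tau \mapsto s_i\tau$: the only pair of positions whose relative order is affected is $\{\tau\inv i, \tau\inv(i+1)\}$, so $N(s_i\tau)$ and $N(\tau)$ differ exactly by inserting or deleting that single pair, contributing $[v_{\tau\inv i}][v_{\tau\inv(i+1)}]$ to the sum $\sum_{(a,b)\in N(\cdot)}[v_a][v_b]$. Applying this with $\tau = \sigma\inv$ (so $\tau\inv = \sigma$, and $\tau\inv i = \sigma i$ — I must be careful here to track whether it is $\sigma i$ or $\sigma\inv i$ that appears; since $N(\tau)$ is defined via positions $a<b$ with $\tau(a)>\tau(b)$, the affected positions are those $a$ with $\tau(a)\in\{i,i+1\}$, namely $a \in \{\tau\inv i,\tau\inv(i+1)\}$, and with $\tau=\sigma\inv$ these are $\sigma i$ and $\sigma(i+1)$ — so the consistency check is that $[v_{\sigma i}][v_{\sigma(i+1)}] = [v_{\sigma\inv i}]'\cdots$; I would reconcile this by noting that in the tensor $v_{\sigma\inv 1}\ot\dots$ the entries sitting in positions $i,i+1$ are precisely $v_{\sigma\inv i}, v_{\sigma\inv(i+1)}$, and these are the ones swapped, so the self-consistent bookkeeping uses $\sigma\inv$) yields exactly the displayed congruence. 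Once this combinatorial lemma is in hand the theorem follows, and independence of the chosen word is automatic since both sides are intrinsic to $\pi$.
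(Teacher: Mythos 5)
Your overall strategy --- verify the formula on the generators $s_i$ and propagate it along a word in simple reflections via a one-step identity for the parity count $n$ --- is the natural one, and the paper supplies no proof of this lemma, so there is nothing to compare it against; the base case is fine. But the induction step contains a genuine error, which your own parenthetical ``reconciliation'' flags without repairing. Since $\varpi_r$ is a homomorphism, applying $\varpi_r(s_i)$ to $\varpi_r(\sigma)(\ul v)$ computes $\varpi_r(s_i\sigma)(\ul v)$, not $\varpi_r(\sigma s_i)(\ul v)$; the resulting tensor is indexed by $(s_i\sigma)\inv=\sigma\inv s_i$, not by $(\sigma s_i)\inv=s_i\sigma\inv$, and the extra sign produced is $[v_{\sigma\inv(i)}][v_{\sigma\inv(i+1)}]$. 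The clean combinatorial fact concerns \emph{left} multiplication of the argument of $n$: the sets $N(s_i\tau)$ and $N(\tau)$ differ in exactly the single pair $\{\tau\inv(i),\tau\inv(i+1)\}$, so $n(s_i\tau,\ul v)=n(\tau,\ul v)+[v_{\tau\inv(i)}][v_{\tau\inv(i+1)}]$. Right multiplication $\tau\mapsto\tau s_i$ does \emph{not} change $N(\tau)$ by a single pair: it interchanges the inversion status of $(a,i)$ with that of $(a,i+1)$ for $a<i$ (and of $(i,b)$ with $(i+1,b)$ for $b>i+1$), and since their contributions $[v_a][v_i]$ and $[v_a][v_{i+1}]$ differ when $[v_i]\neq[v_{i+1}]$, the weighted sum changes by more than one term. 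Consequently the displayed congruence you rely on, with correction term $[v_{\sigma\inv(i)}][v_{\sigma\inv(i+1)}]$, is false; the true identity for $n(s_i\sigma\inv,\ul v)$ has correction $[v_{\sigma(i)}][v_{\sigma(i+1)}]$, and the $\sigma$-versus-$\sigma\inv$ mismatch you noticed is not a bookkeeping artefact but the precise obstruction.

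If you run the induction correctly (on $\sigma\mapsto s_i\sigma$, using the left-multiplication identity with $\tau=\sigma$), you obtain $\varpi_r(\pi)(\ul v)=(-1)^{n(\pi,\ul v)}v_{\pi\inv(1)}\ot\dots\ot v_{\pi\inv(r)}$, i.e.\ with exponent $n(\pi,\ul v)$ rather than $n(\pi\inv,\ul v)$. These exponents genuinely differ: take $r=3$, $\pi=s_1s_2$ (so $\pi\inv(1)=3$, $\pi\inv(2)=1$, $\pi\inv(3)=2$) and parities $[v_1]=\bar0$, $[v_2]=[v_3]=\bar1$; then $\varpi_3(s_1)\varpi_3(s_2)(\ul v)=-\,v_3\ot v_1\ot v_2$, while $n(\pi\inv,\ul v)=[v_1][v_2]+[v_1][v_3]=\bar0$ and $n(\pi,\ul v)=[v_1][v_3]+[v_2][v_3]=\bar1$. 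So you must either prove the version with $n(\pi,\ul v)$ and then reconcile it with the stated formula by pinning down the intended composition convention for $\Sym_r$ (the two versions correspond to opposite conventions for the product $\pi\pi'$), or record that under the standard convention the exponent should read $n(\pi,\ul v)$. As written, no choice of bookkeeping in your induction step makes the stated congruence on $n$ come out true, so the proof does not yet establish the formula it claims.
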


We next identify the element of $T^r(V)\ot T^r(V^*)$ which corresponds to $\varpi_r(\pi)\in\End_\La(T^r(V))$
under the isomorphism $f^{(r)}$. For this, we take a homogeneous basis $e_1,\dots,e_{m+\ell}$ of $V_\C$,
where $[e_i]=\bar0$ for $1\leq i\leq m$, and $[e_i]=\bar1$ otherwise. 
Let $\ve_1,\dots,\ve_{m+\ell}$ be the dual basis of $V_\C^*$. Then $(e_i)$ and $(\ve_i)$ are homogeneous bases of
the $\La$-modules $V$ and $V^*$ respectively, and are dual in the sense that $\ve_i(e_j)=\delta_{ij}$.  As usual, we write $[e_i]=[i]=[\ve_i]$
for the parity of these elements.

\begin{lemma}\label{lem:epi}
The element $E_\pi\in T^r(V)\ot T^r(V^*)$ which corresponds to $\varpi_r(\pi)\in\End_\La(T^r(V))$
is given by
$$
E_\pi=\sum_{i_1,\dots,i_a=1}^r(-1)^{J(\pi,(i_a))}e_{i_{\pi\inv 1}}\ot e_{i_{\pi\inv 2}}\ot\dots\ot
e_{i_{\pi\inv r}}\ot \ve_{i_{1}}\ot \ve_{i_{2}}\ot \dots\ot \ve_{i_{r}},
$$
where
$$
J(\pi,(i_a))=\sum_{\overset{1\leq a<b\leq r}{\pi\inv(a)<\pi\inv(b)}}[i_a][i_b].
$$
\end{lemma}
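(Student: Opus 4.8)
The plan is to compute $E_\pi := (f^{(r)})^{-1}(\varpi_r(\pi))$ directly, by expanding $\varpi_r(\pi)$ in the fixed homogeneous basis $\{e_i\}$ of $V$ (and its dual $\{\ve_i\}$ of $V^*$), and then tracking the signs that the two canonical identifications introduce. First I would write the generic element of $T^r(V)\otimes T^r(V^*)$ as a $\La$-linear combination $\sum_{\ul i,\ul j} c_{\ul i\ul j}\, e_{i_1}\otimes\dots\otimes e_{i_r}\otimes\ve_{j_1}\otimes\dots\otimes\ve_{j_r}$ and apply $f^{(r)}$ to it: by the defining formula, $f^{(r)}(e_{i_1}\otimes\dots\otimes e_{i_r}\otimes\ve_{j_1}\otimes\dots\otimes\ve_{j_r})$ sends a basis vector $e_{k_1}\otimes\dots\otimes e_{k_r}$ to $(-1)^{J(p(\ul\ve_{\ul j}),p(\ul e_{\ul k}))}\,\delta_{j_1 k_1}\cdots\delta_{j_r k_r}\, e_{i_1}\otimes\dots\otimes e_{i_r}$, i.e. it is (up to the sign $(-1)^{J((j_a),(j_a))}$, since the surviving term forces $\ul k=\ul j$) the ``matrix unit'' $E_{\ul i,\ul j}$ carrying $e_{\ul j}\mapsto e_{\ul i}$. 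So $f^{(r)}$ is essentially the standard identification of $T^r(V)\otimes T^r(V^*)$ with $\End_\La(T^r(V))$, twisted by a known diagonal sign.

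Next I would apply Lemma \ref{lem:vpaction}: $\varpi_r(\pi)$ sends $e_{j_1}\otimes\dots\otimes e_{j_r}$ to $(-1)^{n(\pi^{-1},\ul e_{\ul j})}\, e_{j_{\pi^{-1}1}}\otimes\dots\otimes e_{j_{\pi^{-1}r}}$. Comparing with the matrix-unit description, the coefficient of $e_{i_1}\otimes\dots\otimes e_{i_r}\otimes\ve_{j_1}\otimes\dots\otimes\ve_{j_r}$ in $E_\pi$ is nonzero exactly when $i_a=j_{\pi^{-1}a}$ for all $a$, equivalently when, after relabelling the summation index as a single free multi-index $\ul i=(i_1,\dots,i_r)$ with $j_a = i_{\pi^{-1}a}$... — more cleanly, reindex so that the $\ve$-factors carry indices $i_1,\dots,i_r$ and the $e$-factors carry $i_{\pi^{-1}1},\dots,i_{\pi^{-1}r}$, which is precisely the index pattern displayed in the statement. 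It then remains to collect all the signs: the sign $(-1)^{n(\pi^{-1},\ul v)}$ from Lemma \ref{lem:vpaction} with $v_a = e_{i_{\pi^{-1}a}}$, the diagonal twist $(-1)^{J((i_a),(i_a))}$ coming from $f^{(r)}$ acting on the matrix unit, and any sign from the fact that writing $f^{(r)}(e_{\ul i'}\otimes\ve_{\ul i})$ as a matrix unit already absorbed a $J$-sign. I would show that the net exponent equals $J(\pi,(i_a)) := \sum_{1\le a<b\le r,\ \pi^{-1}(a)<\pi^{-1}(b)}[i_a][i_b]$ as claimed.

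The main obstacle is purely bookkeeping: reconciling the three sources of signs. Concretely, $n(\pi^{-1},\ul e_{\ul j})$ with $[v_a]=[i_{\pi^{-1}a}]$ is a sum over inversions $(a,b)$ of $\pi^{-1}$ of $[i_{\pi^{-1}a}][i_{\pi^{-1}b}]$; substituting $a'=\pi^{-1}a$, $b'=\pi^{-1}b$ turns the inversion condition $a<b$, $\pi^{-1}a>\pi^{-1}b$ into $\pi a' < \pi b'$... — I must be careful which of $\pi$ or $\pi^{-1}$ appears, and the final answer's condition is $\pi^{-1}(a)<\pi^{-1}(b)$ for $a<b$, so some re-indexing identity of the form ``inversions of $\pi^{-1}$ in one labelling = non-inversions of $\pi$ in another'' is needed, together with the observation that $J((i_a),(i_a)) = \sum_{a<b}[i_a][i_b]$ is the ``full'' quadratic form, of which $J(\pi,(i_a))$ is the sub-sum over non-inversions; the inversion part should cancel against $(-1)^{n(\pi^{-1},\ul v)}$. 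I expect that after choosing conventions consistently — in particular orienting the substitution so that the quadratic form over \emph{all} pairs splits as (inversions) $+$ (non-inversions) — the two contributions combine to leave exactly the non-inversion sum $J(\pi,(i_a))$, proving the lemma. The only genuinely delicate point is sign compatibility of $f^{(r)}$ with the Koszul rule when the matrix unit $E_{\ul i,\ul j}$ is odd, i.e. $[\ul i]\neq[\ul j]$; I would handle this by checking the formula first on homogeneous generators and then invoking $\La$-bilinearity and the fact (from the commutative diagram \eqref{eq:diag1}) that $f^{(r)}$ is an isomorphism of graded $\La$-modules respecting the $\GL(V)$-action, so the computed $E_\pi$ is forced.
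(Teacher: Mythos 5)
Your proposal is correct and follows essentially the same route as the paper, which simply notes that it is straightforward to check that $f^{(r)}(E_\pi)$ acts as $\varpi_r(\pi)$ on basis elements $e_{j_1}\ot\dots\ot e_{j_r}$; your explicit sign bookkeeping, resting on the identity $J(\pi,(i_a))+n(\pi\inv,e_{i_1}\ot\dots\ot e_{i_r})=J(([i_a]),([i_a]))$ (the split of the full quadratic form into inversions and non-inversions of $\pi\inv$), is exactly the "straightforward check" the paper leaves implicit (and which it records explicitly only in the proof of Lemma \ref{lem:epie}).
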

\begin{proof}
It is straightforward to check that $f^{(r)}(E_\pi)$ acts as $\varpi_r(\pi)$ on each element
of the form $e_{j_1}\ot\dots\ot e_{j_r}$. Since these elements form a basis of $T^r(V)$, the result
follows.
\end{proof}
\begin{definition}\label{def:brace}
Define the $\La$-bilinear non-degenerate pairing $\langle -,-\rangle:T^r(V^*)\times T^r(V)\lr\La$ by
$$
\langle \phi_1\ot\dots\ot \phi_r, v_1\ot\dots\ot v_r\rangle=(-1)^{J(p(\ul\phi),p(\ul v))}\phi_1(v_1)\dots\phi_r(v_r).
$$
It is a straightforward exercise to show that for $\pi\in\Sym_r$, we then have, for $\ul\phi\in T^r(V^*)$ and $\ul v\in T^r(V)$,
\be\label{eq:slide}
\langle \varpi_r(\pi)(\ul\phi),\ul v\rangle=\langle\ul\phi,\varpi_r(\pi\inv)(\ul v)\rangle.
\ee
We now define the functions $\delta_\pi\in \left((T^r(V^*)\ot T^r(V))^*\right)^{\GL(V)}$ by
\be\label{eq:defdelta}
\delta_\pi(\ul\psi\ot\ul w):=\langle\ul\psi,\varpi_r(\pi)(\ul w)\rangle,
\ee
for $\ul\psi\in T^r(V^*)$ and $\ul w\in T^r(V)$.
\end{definition}

\begin{lemma}\label{lem:epie}
For any elements $\ul\psi\in T^r(V^*)$ and $\ul w\in T^r(V)$, we have
$$
e^{(r)}(E_\pi)(\ul\psi\ot\ul w)=\delta_\pi(\ul\psi\ot\ul w).
$$
\end{lemma}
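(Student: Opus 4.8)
The plan is to reduce both sides to an explicit finite sum by evaluating on basis tensors, and then to check that the resulting sign exponents coincide modulo $2$. Both $e^{(r)}(E_\pi)$ and $\delta_\pi$ are (even) elements of $\left(T^r(V^*)\ot_\La T^r(V)\right)^*$, hence $\La$-linear functionals on a tensor product over $\La$; since the pure tensors $\ve_{k_1}\ot\dots\ot\ve_{k_r}\ot e_{j_1}\ot\dots\ot e_{j_r}$ (with $1\le k_a,j_a\le m+\ell$) span this $\La$-module, it suffices to verify the asserted equality when $\ul\psi=\ve_{k_1}\ot\dots\ot\ve_{k_r}$ and $\ul w=e_{j_1}\ot\dots\ot e_{j_r}$. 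In that case every pairing $\phi(v)$ occurring below is a Kronecker delta, so no further Koszul signs intervene.

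For the left-hand side I would substitute the formula for $E_\pi$ from \lemref{lem:epi} into the defining formula for $e^{(r)}$. The factor $\prod_a\ve_{i_a}(w_a)=\prod_a\delta_{i_a j_a}$ collapses the multi-index sum to $i_a=j_a$ for all $a$, while $\prod_a\psi_a(e_{i_{\pi\inv a}})=\prod_a\delta_{k_a\, j_{\pi\inv a}}$. Hence $e^{(r)}(E_\pi)(\ul\psi\ot\ul w)$ vanishes unless $k_a=j_{\pi\inv a}$ for every $a$, and on this locus it equals $(-1)^{s_L}$, where $s_L=J(\pi,(j_a))+c(\ul v,\ul\phi,\ul\psi,\ul w)$ with $\ul v=e_{j_{\pi\inv 1}}\ot\dots\ot e_{j_{\pi\inv r}}$ and $\ul\phi=\ve_{j_1}\ot\dots\ot\ve_{j_r}$. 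Since $[\ul v]=\sum_a[j_a]=[\ul\phi]$, the term $[\ul\psi]\,([\ul v]+[\ul\phi])$ of $c$ drops out, leaving $s_L\equiv J(\pi,(j_a))+J(p(\ul\psi),p(\ul v))+J(p(\ul\phi),p(\ul w))\pmod 2$.

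For the right-hand side, by \lemref{lem:vpaction} and Definition~\ref{def:brace} one has $\delta_\pi(\ul\psi\ot\ul w)=(-1)^{n(\pi\inv,\ul w)}\langle\ul\psi,\,w_{\pi\inv 1}\ot\dots\ot w_{\pi\inv r}\rangle$, and the pairing forces the same support condition $k_a=j_{\pi\inv a}$ for all $a$; on that locus it equals $(-1)^{s_R}$ with $s_R=n(\pi\inv,\ul w)+J\bigl(p(\ul\psi),\,(p_{\pi\inv 1},\dots,p_{\pi\inv r})\bigr)$, where $p=([j_1],\dots,[j_r])$. On the support $p(\ul\psi)=(p_{\pi\inv 1},\dots,p_{\pi\inv r})=p(\ul v)$ and $p(\ul\phi)=p(\ul w)=p$, so the terms $J(p(\ul\psi),p(\ul v))$ in $s_L$ and $J\bigl(p(\ul\psi),(p_{\pi\inv 1},\dots,p_{\pi\inv r})\bigr)$ in $s_R$ agree and cancel. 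It then remains to see that $J(\pi,(j_a))+J(p,p)\equiv n(\pi\inv,\ul w)\pmod 2$: using $J(p,p)=\sum_{a<b}p_ap_b$ and $J(\pi,(j_a))=\sum_{a<b,\,\pi\inv a<\pi\inv b}p_ap_b$, the left side becomes $\sum_{a<b,\,\pi\inv a>\pi\inv b}p_ap_b$ after deleting the repeated terms mod $2$, which is exactly $n(\pi\inv,\ul w)$ by Definition~\ref{def:n}.

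The routine but genuinely substantive part is the sign bookkeeping in this last step: one must keep track of the interplay of the bilinear quantity $J(-,-)$, the Koszul sign $c$ built into $e^{(r)}$, the sign $(-1)^{n(\pi\inv,\ul v)}$ coming from $\varpi_r(\pi)$, and the internal twist $J(\pi,(i_a))$ of $E_\pi$, and verify that they telescope precisely. I expect this, rather than any conceptual difficulty, to be the main obstacle.
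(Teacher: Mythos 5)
Your proof is correct and follows essentially the same route as the paper: reduce by $\La$-bilinearity to dual basis tensors, observe that both sides are supported on the same locus $k_a=j_{\pi\inv a}$, and match the signs via the mod-$2$ identity $J(\pi,(j_a))+n(\pi\inv,\ul w)=J(p,p)$, which is exactly the identity the paper's proof invokes. You merely make explicit the cancellations (the $[\ul\psi]([\ul v]+[\ul\phi])$ term and the common $J(p(\ul\psi),p(\ul v))$ term) that the paper leaves as "it remains only to check the sign."
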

\begin{proof}
Since the statement is bilinear in $\ul\psi$ and $\ul w$ it suffices to prove it for $\ul\psi=\ve_{i_1}\ot\dots\ot\ve_{i_r}$
and $\ul w=e_{k_1}\ot\dots\ot e_{k_r}$. It is then clear that both sides are equal to zero unless $i_a=k_{\pi\inv a}$
for $a=1,2,\dots,r$. It remains only to check the sign, and for this we note that we have the following equation in
$\Z/2\Z$. For any sequence $([i_a])=[i_1],\dots, [i_r]\in(\Z/2\Z)^r$,
we have
$$
\begin{aligned}
J(\pi,(i_a))+n(\pi\inv,e_{i_1}\ot\dots\ot e_{i_r})&=J(([i_a]),([i_a]))\\
&=J(([i_{\pi\inv a}]),([i_{\pi\inv a}]))=\sum_{a\neq b}[i_a][i_b].\\
\end{aligned}
$$
\end{proof}
\begin{corollary}\label{cor:equ}
The functions
$\delta_\pi$ ($\pi\in\Sym_r$) span $\left((T^r(V^*)\ot T^r(V))^*\right)^{\GL(V)}$.
\end{corollary}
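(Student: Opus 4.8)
The plan is to deduce Corollary~\ref{cor:equ} directly from the machinery already assembled, using Theorem~\ref{thm:fft-gl2} as the substantive input and the commutative diagram \eqref{eq:diag1} as the bookkeeping device. First I would observe that Theorem~\ref{thm:fft-gl2} (in the case $r=s$) tells us that $\left((T^r(V^*)\ot_\La T^r(V))^*\right)^{\GL(V)}=\sum_{\pi\in\Sym_r}\La\delta_\pi$, so the content of the corollary is precisely that the $\delta_\pi$ span this invariant space; thus the statement is essentially a restatement of the relevant half of Theorem~\ref{thm:fft-gl2}, and the real work is to confirm that the $\delta_\pi$ defined in \eqref{eq:defdelta} agree with the functions appearing in that theorem (displayed up to sign in \eqref{eq:dpi}) and that they indeed exhaust the invariants. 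The bilinear pairing identity \eqref{eq:slide} shows $\delta_\pi$ is well-defined and lands in the invariant space, since $\varpi_r(\pi)$ commutes with the $\GL(V)$-action by \eqref{eq:varpi}.

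The key chain of reasoning runs through the diagram \eqref{eq:diag1}. By Lemma~\ref{lem:epi}, the element $E_\pi\in T^r(V)\ot T^r(V^*)$ corresponds under the isomorphism $f^{(r)}$ to $\varpi_r(\pi)\in\End_\La(T^r(V))$; equivalently, $\alpha_r(\pi)=e^{(r)}(E_\pi)$ after chasing the square, since the diagram commutes and $f^{(r)}(E_\pi)=\varpi_r(\pi)$. By Lemma~\ref{lem:epie}, $e^{(r)}(E_\pi)=\delta_\pi$. Therefore $\alpha_r(\pi)=\delta_\pi$, i.e. the map $\alpha_r:\La\Sym_r\to\left(T^r(V^*)\ot_\La T^r(V)\right)^*$ sends $\pi$ to $\delta_\pi$. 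Now Theorem~\ref{thm:BR} asserts that $\varpi_r$ is surjective onto $\End_{\GL(V)}(T^r(V))$, and since $e^{(r)}$ and $f^{(r)}$ are $\GL(V)$-equivariant isomorphisms carrying $\End_{\GL(V)}(T^r(V))$ onto $\left((T^r(V^*)\ot T^r(V))^*\right)^{\GL(V)}$ (invariants correspond to invariants under an equivariant isomorphism), the image of $\alpha_r$ is exactly this invariant space. Since that image is the $\La$-span of $\{\alpha_r(\pi)=\delta_\pi:\pi\in\Sym_r\}$, we conclude that the $\delta_\pi$ span $\left((T^r(V^*)\ot T^r(V))^*\right)^{\GL(V)}$.

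The main obstacle, and the only point requiring genuine care, is verifying that the vertical isomorphism $e^{(r)}$ (together with $f^{(r)}$) really does restrict to an isomorphism between $\End_{\GL(V)}(T^r(V))$ and the $\GL(V)$-invariants of $\left(T^r(V^*)\ot T^r(V)\right)^*$ — that is, that all the sign conventions in the definitions of $e^{(r)}$, $f^{(r)}$, and the $\GL(V)$-actions on $V^*$, on $\End_\La(T^r(V))$ by conjugation, and on the dual module, are mutually consistent so that equivariance genuinely holds. This is the kind of superalgebra sign-chase that is routine in principle but error-prone; once it is in place, the corollary follows formally. An alternative, more self-contained route would bypass the diagram: show directly that any $\GL(V)$-invariant $\phi\in\left(T^r(V^*)\ot T^r(V)\right)^*$ corresponds via $e^{(r){-1}}$ to a $\GL(V)$-invariant element of $T^r(V)\ot T^r(V^*)\cong\End_\La(T^r(V))$, invoke Theorem~\ref{thm:BR} to write it as $\varpi_r(\sum_\pi c_\pi\pi)$ with $c_\pi\in\La$, and then apply Lemmas~\ref{lem:epi} and~\ref{lem:epie} to identify $\phi=\sum_\pi c_\pi\delta_\pi$; but this amounts to the same diagram chase reorganised, so I would present the diagram version for brevity.
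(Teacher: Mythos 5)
Your proposal is correct and is essentially the paper's own argument: the paper proves the corollary with the single remark that it is ``clear from the commutativity of the diagram \eqref{eq:diag1}'', and your diagram chase (identifying $\alpha_r(\pi)=e^{(r)}(E_\pi)=\delta_\pi$ via Lemmas~\ref{lem:epi} and~\ref{lem:epie}, then invoking the surjectivity of $\varpi_r$ from Theorem~\ref{thm:BR} and the $\GL(V)$-equivariance of $e^{(r)}$ and $f^{(r)}$) is exactly the reasoning that remark compresses. Your only caveat — that the relevant input is Theorem~\ref{thm:BR} rather than Theorem~\ref{thm:fft-gl2}, since the latter is itself deduced from the former via the same diagram — is well taken and matches the paper's logical order.
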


This is clear from the commutativity of the diagram \eqref{eq:diag1}.

\subsubsection{The case of $\OSp(V)$} The second formulation of the FFT for $G=\OSp(V)$ describes the
$\Lambda$-module $\left(T^r(V)^*\right)^G=T^r(V^*)^G$.
Here $V$ is the orthosymplectic $\La$-superspace corresponding to $V_\C$,
with $\sdim(V_\C)=(m|2n)$. As usual, the non-degenerate orthosymplectic form \eqref{eq:laform} on $V$ will
be denoted $(-,-)$.

If $r=2d$ is even and $\pi\in\Sym_{2d}$, then we have the element $\kappa_\pi\in\left(T^r(V)^*\right)^G$,
defined as follows.

First define $\langle-\rangle_0\in T^{2d}(V)^*$: for $\ul v=v_1\ot\dots\ot v_{2d}\in T^{2d}(V)$,
$$
\langle \ul v\rangle_0=(v_1,v_2)(v_3,v_4)\dots(v_{2d-1},v_{2d}).
$$
\begin{definition}\label{def:kappa} For $\pi\in\Sym_{2d}$ we define $\kappa_\pi\in T^{2d}(V)^*$ by
\be\label{eq:kappa}
\begin{aligned}
\kappa_\pi(v_1\ot&\dots\ot v_{2d}):=\langle\varpi_{2d}(\pi)(\ul v)\rangle_0\\
=&(-1)^{n(\pi\inv,\ul v)}(v_{\pi\inv(1)},v_{\pi\inv(2)})(v_{\pi\inv(3)},v_{\pi\inv(4)})\dots(v_{\pi\inv(2d-1)},v_{\pi\inv(2d)}).\\
\end{aligned}
\ee
\end{definition}

The second formulation of the FFT for $\OSp(V)$ is as follows.

\begin{theorem}\label{thm:fft-osp2}\cite[Theorem 4.3]{LZ6}
Maintaining the notation above, let $W=T^r(V^*)$ and $G=\OSp(V)$. If $r$ is odd, then $W^G=0$.
If $r=2d$ is even, then $W^G=\sum_{\pi\in\Sym_{2d}}\La\kappa_\pi$.
\end{theorem}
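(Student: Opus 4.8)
The plan is to deduce this ``second formulation'' of the first fundamental theorem for $G=\OSp(V)$ from the ``first formulation'', \thmref{thm:fft-osp1} (equivalently \corref{cor:fft-ospbr}), by transporting everything along the $G$-isomorphism $V\cong V^*$ afforded by the form $(-,-)$ --- exactly the device used above to identify the two formulations for $\GL(V)$. The case of odd $r$ I would dispose of at once: the scalar $-\id_V$ is an even invertible endomorphism of $V$ preserving $(-,-)$, hence lies in $G$, and it acts on both $T^r(V)$ and $T^r(V^*)$ as multiplication by $(-1)^r$; so for $r$ odd any $x\in W^G$ satisfies $x=-x$ and therefore $x=0$, as we work in characteristic $0$. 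This gives $W^G=0$.

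For $r=2d$, non-degeneracy and evenness of $(-,-)$ yield a $G$-equivariant isomorphism $\flat\colon V\to V^*$, $v\mapsto(v,-)$, the equivariance being the defining property of $G$ in Definition~\ref{def:sg} (with the usual Koszul signs). Applying $\flat$ in each tensor slot, together with the canonical sign-twisted identifications (as in Definition~\ref{def:brace}), produces a chain of $G$-module isomorphisms
\[
W^G=\bigl(T^{2d}(V^*)\bigr)^G\;\cong\;\bigl(T^{2d}(V)^*\bigr)^G\;\cong\;\Hom_G\bigl(T^d(V)\ot_\La T^d(V),\La\bigr)\;\cong\;\End_G\bigl(T^d(V)\bigr),
\]
the last step using $T^d(V)^*\cong T^d(V^*)\cong T^d(V)$. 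Under this chain $\kappa_\pi$ is carried, up to an invertible scalar, to the image in $\End_G(T^d(V))$ of the Brauer diagram on $d$ strings realising the perfect matching $\bigl\{\{\pi\inv(2j-1),\pi\inv(2j)\}:1\le j\le d\bigr\}$ of the $2d$ nodes (outputs bent down to inputs); conversely every Brauer-diagram generator of $\End_G(T^d(V))$ arises this way, since $\kappa_\pi$ depends on $\pi$ only through that matching and every perfect matching occurs. By \thmref{thm:fft-osp1} these Brauer-diagram images $\La$-span $\End_G(T^d(V))$, so transporting back shows the $\kappa_\pi$ ($\pi\in\Sym_{2d}$) $\La$-span $W^G$; the reverse inclusion $\La\kappa_\pi\subseteq W^G$ is immediate from the $G$-invariance of $(-,-)$ applied to the formula \eqref{eq:kappa}.

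\textbf{Main obstacle.} The real work is the sign bookkeeping in the middle display: one must check that the composite of $\flat^{\ot d}$, the evaluation/coevaluation isomorphisms and the pairing $\langle-,-\rangle$ sends the Brauer-diagram basis of $\End_G(T^d(V))$ onto the family $\{\kappa_\pi\}$ up to invertible scalars, with no residual sign ambiguity --- the same kind of Koszul-sign calculation as in Lemmas~\ref{lem:epi}--\ref{lem:epie}, where the factors $n(\pi\inv,\ul v)$ of \eqref{eq:kappa} and the $J$-signs built into $\flat$, $e^{(\cdot)}$, $f^{(\cdot)}$ and $\langle-,-\rangle$ are designed to cancel. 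A subsidiary point requiring care is the $G$-equivariance of $\flat$ in the super setting (left versus right actions, and the factor $(-1)^{[u][v]}$ in supersymmetry); once that is in place, and using that $\La$ is free, hence faithfully flat, over $\C$, the remaining verifications are routine. (Alternatively, and this is the route taken in \cite{LZ6}, one can bypass the first formulation entirely and use algebraic supergeometry in the spirit of Atiyah--Bott--Patodi to reduce $W^G$ to the classical invariants of ${\rm O}((V_\C)_{\bar 0})\times{\rm Sp}((V_\C)_{\bar 1})$ on $T^r(V_\C)$, together with the classical FFTs for the orthogonal and symplectic groups; there the difficulty lies in setting up the geometric reduction rather than in the sign combinatorics.)
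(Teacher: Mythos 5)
Your treatment of odd $r$ via $-\id_V\in G$ is correct and is the standard argument. For $r=2d$, your dualization chain $\left(T^{2d}(V^*)\right)^G\cong\End_G\bigl(T^d(V)\bigr)$ via $v\mapsto(v,-)$, under which $\kappa_\pi$ corresponds (up to sign) to the image of the Brauer diagram $\bU^d(D_0\pi)$ and every diagram arises, is sound modulo the Koszul-sign bookkeeping you flag; indeed it is precisely the content of the upper square of diagram \eqref{cd2}, and combined with Corollary \ref{cor:fft-ospbr} it does show that the spanning statement of Theorem \ref{thm:fft-osp2} is equivalent to the surjectivity statement of Theorem \ref{thm:fft-osp1}.

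The difficulty is that this is not how the theorem is, or can independently be, proved, and the present paper offers no proof at all: it is quoted from \cite[Theorem 4.3]{LZ6}. In that paper the logical order is the reverse of yours: the multilinear ("second") formulation is established first, by the algebraic-supergeometric Atiyah--Bott--Patodi argument you mention only parenthetically (pass from an $\OSp(V)$-invariant $L$ of $T^{2d}(V)$ to the $\GL(V)$-invariant function $F_L$ on $S^2(V^*)_{\bar 0}\ot T^{2d}(V)$ via $\omega(A)=A^\dag A$ and a density argument, then invoke the FFT for $\GL(V)$ --- the same mechanism reused in \eqref{eq:fl} and Proposition \ref{prop:ltohl} here), and the endomorphism-algebra formulation [LZ6, Cor.\ 5.7], i.e.\ Theorem \ref{thm:fft-osp1}, is deduced from it. So taking Theorem \ref{thm:fft-osp1} as your starting point makes the argument circular as a proof of the cited theorem: what you have written is a correct verification that the two formulations are equivalent, not an independent proof. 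To make it a genuine proof you would have to carry out the supergeometric reduction yourself, which is where the real work of \cite{LZ6} lies, rather than in the sign combinatorics you identify as the main obstacle.
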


Of course we may have $\kappa_\pi=\kappa_{\pi'}$ for distinct permutations $\pi,\pi'\in\Sym_{2d}$, and this
permits us to formulate a slightly sharper version of the FFT for $\OSp(V)$ as follows.

\begin{definition}\label{def:c}
Let $C$ be the centraliser 
of the involution $(12)(34)\dots(2d-1,2d)$ in $\Sym_{2d}$. It is the semidirect product
$C=\Sym_d\ltimes (\zt)^d$.
\end{definition}

The quotient $\Sym_{2d}/C$
may be identified with the set of partitions $\{1,2,\dots,2d\}=\{i_1,j_1\}\amalg\{i_2,j_2\}\amalg\dots\amalg\{i_d,j_d\}$
of $\{1,2,\dots,2d\}$ into pairs. This is because $\Sym_{2d}$ evidently acts transitively on such partitions, and
$C$ is the stabiliser of the partition $\{1,2,\dots,2d\}=\{1,2\}\amalg\{3,4\}\amalg\dots\amalg\{2d-1,2d\}$.
This set of partitions is also evidently in
canonical bijection with the set of Brauer diagrams from $2d$ to $0$ (see \cite[Definitions 2.1 and 2.3]{LZ5}), and the action
of $\Sym_{2d}$ on $B_{2d}^0$ is precisely right multiplication by the diagrams, where a permutation $\pi\in\Sym_{2d}$
is thought of as a Brauer diagram $\pi:2d\to 2d$ in $B_{2d}^{2d}$.

We shall freely make use of the language of the Brauer category and refer the reader to \cite{LZ5} for details.
In particular, for non-negative integers $p,q$ $B_p^q$ denotes the $\La$ module of Brauer diagrams $D:p\to q$.
Specifically, the partitions $D$ above may be thought of as morphisms from $2d$ to $0$ in the Brauer category $\CB(m-2n)$
(see Fig. 1).

\setlength{\unitlength}{0.30mm}
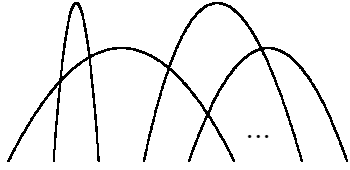
\begin{figure}[h]
\begin{center}
\begin{picture}(200, 80)(0,50)
\qbezier(30, 60)(80, 160)(130, 60)
\qbezier(50, 60)(60, 200)(70, 60)

\put(135, 70){$...$}

\qbezier(90, 60)(120, 200)(160, 60)

\qbezier(110, 60)(145, 160)(180, 60)
\end{picture}
\end{center}
\caption{A diagram $D: 2d\longrightarrow 0$}
\label{Fig5}
\end{figure}

\setlength{\unitlength}{0.35mm}
\begin{figure}[h]
\begin{center}
\begin{picture}(250, 120)(0,0)

\put(20, 0){$1$}
\put(50, 0){$2$}
\put(80, 0){$3$}
\put(110, 0){$4$}

\put(130, 15){......}

\put(165, 0){$2d-1$}
\put(210, 0){$2d$}

\put(130, 85){......}

\qbezier(20, 10)(50, 55)(80, 95)
\put(68, 100){$\pi(1)$}

\qbezier(50, 10)(115, 55)(180, 95)
\put(175, 100){$\pi(2)$}

\qbezier(80, 10)(50, 55)(20, 95)
\put(8, 100){$\pi(3)$}

\qbezier(110, 10)(155, 55)(210, 95)
\put(210, 100){$\pi(4)$}

\qbezier(180, 10)(145, 55)(110, 95)
\put(95, 100){$\pi(2d-1)$}

\qbezier(210, 10)(110, 55)(50, 95)
\put(35, 100){$\pi(2d)$}

\end{picture}
\end{center}
\caption{A permutation $\pi: 2d\to 2d$}
\label{Fig2}
\end{figure}

\begin{lemma}\label{lem:ctriv}
For any elements $\ul v\in T^{2d}(V)$ and $\sigma\in C$, we have $\langle \varpi_{2d}(\sigma)\ul v\rangle_0=\langle\ul v\rangle_0$.
\end{lemma}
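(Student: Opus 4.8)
The plan is to reduce the statement to a generating set of $C$ and verify it there. Since $\varpi_{2d}\colon\La\Sym_{2d}\to\End_\La(T^{2d}(V))$ is an algebra homomorphism, if one knows that
$$
\langle\varpi_{2d}(\sigma)(\ul w)\rangle_0=\langle\ul w\rangle_0\quad\text{for every }\ul w\in T^{2d}(V)
$$
whenever $\sigma$ lies in some fixed generating set $S$ of $C$, then the same identity holds for every $\sigma\in C$: writing $\sigma=\sigma_1\cdots\sigma_k$ with each $\sigma_j\in S$, we have $\varpi_{2d}(\sigma)(\ul v)=\varpi_{2d}(\sigma_1)\bigl(\cdots\varpi_{2d}(\sigma_k)(\ul v)\cdots\bigr)$, and we apply the generator identity $k$ times. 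Moreover, since $\langle-\rangle_0$ and $\varpi_{2d}$ are $\La$-linear and $(-,-)$ is $\La$-bilinear, it suffices to treat homogeneous $\ul v=v_1\ot\dots\ot v_{2d}$.

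As generating set I would take $S=\{\tau_i:=(2i-1,2i)\mid 1\le i\le d\}\cup\{\beta_i:=(2i-1,2i+1)(2i,2i+2)\mid 1\le i\le d-1\}$: the $\tau_i$ generate the normal subgroup $(\zt)^d$ of $C$, while the $\beta_i$, which are the transpositions of adjacent blocks, map onto the generating transpositions of $C/(\zt)^d\cong\Sym_d$, so $S$ generates $C$. For a generator $\sigma\in S$ I would invoke \lemref{lem:vpaction}, which gives $\varpi_{2d}(\sigma)(\ul v)=(-1)^{n(\sigma\inv,\ul v)}v_{\sigma\inv 1}\ot\dots\ot v_{\sigma\inv(2d)}$, so that
$$
\langle\varpi_{2d}(\sigma)(\ul v)\rangle_0=(-1)^{n(\sigma\inv,\ul v)}(v_{\sigma\inv 1},v_{\sigma\inv 2})(v_{\sigma\inv 3},v_{\sigma\inv 4})\cdots(v_{\sigma\inv(2d-1)},v_{\sigma\inv(2d)}).
$$
Both $\tau_i$ and $\beta_i$ are involutions, so $\sigma\inv=\sigma$ in each case.

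For $\sigma=\tau_i$, only the $i$-th factor is affected, becoming $(v_{2i},v_{2i-1})$; since $N(\tau_i)=\{(2i-1,2i)\}$ we have $n(\tau_i,\ul v)=[v_{2i-1}][v_{2i}]$, and supersymmetry of $(-,-)$ gives $(v_{2i},v_{2i-1})=(-1)^{[v_{2i-1}][v_{2i}]}(v_{2i-1},v_{2i})$, so the two signs cancel and $\langle\varpi_{2d}(\tau_i)(\ul v)\rangle_0=\langle\ul v\rangle_0$. For $\sigma=\beta_i$, the $i$-th and $(i+1)$-th factors become $(v_{2i+1},v_{2i+2})$ and $(v_{2i-1},v_{2i})$, and all others are unchanged; since $\La$ is supercommutative and the form is even (so $[(v,w)]=[v]+[w]$), restoring the standard order of these two factors introduces the sign $(-1)^{([v_{2i-1}]+[v_{2i}])([v_{2i+1}]+[v_{2i+2}])}$. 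On the other hand $\beta_i$ fixes every position outside $\{2i-1,2i,2i+1,2i+2\}$, and inspecting those four positions shows $N(\beta_i)=\{(2i-1,2i+1),(2i-1,2i+2),(2i,2i+1),(2i,2i+2)\}$, whence $n(\beta_i,\ul v)=([v_{2i-1}]+[v_{2i}])([v_{2i+1}]+[v_{2i+2}])$. Again the two signs cancel, giving $\langle\varpi_{2d}(\beta_i)(\ul v)\rangle_0=\langle\ul v\rangle_0$, and the lemma follows. The only real work is the sign bookkeeping in the $\beta_i$ case — matching $n(\beta_i,\ul v)$ against the Koszul-type sign produced by graded-commuting two values of the form in $\La$ — but once $N(\beta_i)$ is computed the two signs are visibly identical.
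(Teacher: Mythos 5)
Your proof is correct and follows essentially the same route as the paper: reduce to a generating set of $C$ consisting of the block transpositions $(2i-1,2i+1)(2i,2i+2)$ together with pair-swaps (the paper uses just $(1,2)$, you use all $(2i-1,2i)$, which changes nothing), and then check the sign cancellation via Lemma~\ref{lem:vpaction} and the supersymmetry/evenness of the form. The paper leaves this check as a ``straightforward exercise''; your computation of $n(\sigma,\ul v)$ against the Koszul sign is exactly that exercise, carried out correctly.
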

\begin{proof}
The group $C$ is generated by the permutations $(2i-1,2i+1)(2i,2i+2)$ ($i=1,2,\dots,d-1$) together with  $(1,2)$.
It therefore suffices to prove the statement for these permutations and all $\ul v$, and this is a straightforward
exercise.
\end{proof}

\setlength{\unitlength}{0.35mm}
\begin{figure}[h]
\begin{center}
\begin{picture}(200, 30)(0,0)
\qbezier(30, 0)(45, 60)(60, 0)
\qbezier(80, 0)(95, 60)(110, 0)
\put(120, 8){$......$}
\qbezier(150, 0)(165, 60)(180, 0)

\end{picture}
\end{center}
\caption{The diagram $D_0: 2d\to 0$}
\label{Fig3}
\end{figure}

\begin{corollary}\label{cor:kd}
Let $D_0\in B_{2d}^0$ be the diagram corresponding to the partitioning
$\{1,2,\dots,2d\}=\{1,2\}\amalg\{3,4\}\amalg\dots\amalg\{2d-1,2d\}$ and let $D\in B_{2d}^0$
be  any diagram. If $\pi\in\Sym_{2d}$ is such that $D=D_0\pi$, then $\kappa_\pi$ is independent of $\pi$,
i.e. depends only on $D$. 
\end{corollary}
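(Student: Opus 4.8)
The plan is to reduce the statement to \lemref{lem:ctriv}. The ambiguity in the choice of $\pi$ with $D = D_0\pi$ is governed precisely by the subgroup $C$ of Definition~\ref{def:c}, and \lemref{lem:ctriv} asserts exactly that, after applying $\varpi_{2d}$, the functional $\langle-\rangle_0$ is insensitive to that ambiguity.

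In detail, I would first show that if $D = D_0\pi = D_0\pi'$ with $\pi,\pi'\in\Sym_{2d}$, then $\pi = \sigma\pi'$ for some $\sigma\in C$. This is immediate from the discussion preceding the corollary: the map $\pi\mapsto D_0\pi$ induces a bijection between $\Sym_{2d}/C$ and $B_{2d}^0$, because $C$ is the stabiliser of the pair partition (equivalently, of the diagram $D_0$) associated with $(12)(34)\dots(2d-1,2d)$, and $\Sym_{2d}$ acts transitively on pair partitions. Hence $\sigma:=\pi(\pi')^{-1}$ lies in $C$.

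Next, for an arbitrary $\ul v = v_1\ot\dots\ot v_{2d}\in T^{2d}(V)$, using that $\varpi_{2d}$ is an algebra homomorphism (see \eqref{eq:varpi}) together with Definition~\ref{def:kappa}, I would write
\[
\kappa_\pi(\ul v)=\langle\varpi_{2d}(\pi)(\ul v)\rangle_0=\langle\varpi_{2d}(\sigma)\varpi_{2d}(\pi')(\ul v)\rangle_0.
\]
Setting $\ul w:=\varpi_{2d}(\pi')(\ul v)\in T^{2d}(V)$ and applying \lemref{lem:ctriv} with $\sigma\in C$ gives $\langle\varpi_{2d}(\sigma)\ul w\rangle_0=\langle\ul w\rangle_0=\langle\varpi_{2d}(\pi')(\ul v)\rangle_0=\kappa_{\pi'}(\ul v)$. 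Since $\ul v$ was arbitrary, $\kappa_\pi=\kappa_{\pi'}$, so $\kappa_\pi$ depends only on $D$, as required.

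I do not expect a genuine obstacle: once $C$ has been identified with the relevant stabiliser — which the text has already done — the corollary is essentially a restatement of \lemref{lem:ctriv}. The only point that requires a little care is the left--right bookkeeping: one must check that the two admissible permutations differ by an element of $C$ on the \emph{left} (which is the form in which \lemref{lem:ctriv} is phrased), and this amounts to tracking how composition of the Brauer diagram $D_0$ with a permutation $\pi$ translates into the action of $\Sym_{2d}$ on pair partitions.
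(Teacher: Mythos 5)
Your proposal is correct and follows essentially the same route as the paper: identify the ambiguity $\pi = \sigma\pi'$ with $\sigma$ in the stabiliser $C$ of $D_0$, use that $\varpi_{2d}$ is a homomorphism to factor $\varpi_{2d}(\sigma\pi') = \varpi_{2d}(\sigma)\varpi_{2d}(\pi')$, and conclude by Lemma~\ref{lem:ctriv}. The left/right bookkeeping you flag is handled exactly as you describe, so there is nothing to add.
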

\begin{proof}
For $\pi_1,\pi_2\in\Sym_{2d}$, we have $D_0\pi_1=D_0\pi_2$ if and only if $D_0\pi_1\pi_2\inv=D_0$,
i.e. if and only if $\pi_1=\sigma\pi_2$ for some $\sigma\in C$. But for any $\ul v\in T^{2d}(V)$,
we then have
$$
\kappa_{\pi_1}(\ul v)=\langle \varpi_{2d}(\sigma\pi_2)(\ul v)\rangle_0
=\langle\varpi_{2d}(\sigma)(\varpi_{2d}(\pi_2)\ul v)\rangle_0
=\langle\varpi_{2d}(\pi_2)\ul v\rangle_0
=\kappa_{\pi_2}(\ul v),
$$
the last equality being a consequence of the Lemma. This proves the corollary.
\end{proof}

\begin{definition}\label{def:kd}
For any Brauer diagram $D\in B_{2d}^0$, define $\kappa_D\in (T^{2d}(V^*))^G$ by $\kappa_D=\kappa_\pi$ for
any $\pi\in\Sym_{2d}$ such that $D=D_0\pi$. By Corollary \ref{cor:kd}, this is well defined.
\end{definition}

\begin{corollary}\label{cor:fft-osp2}
In the notation above, we have $T^{2d}(V^*)^{\OSp(V)}=W^G=\sum\limits_{D\in\cD}\La\kappa_D$.
\end{corollary}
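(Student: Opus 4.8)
The plan is to deduce this corollary directly from Theorem~\ref{thm:fft-osp2} by combining the two preparatory results about the functions $\kappa_\pi$. Recall that Theorem~\ref{thm:fft-osp2} asserts $W^G=\sum_{\pi\in\Sym_{2d}}\La\kappa_\pi$ (with $W=T^{2d}(V^*)$), so the only thing left to do is to reindex this spanning set by Brauer diagrams rather than by permutations, and to check that the reindexing is consistent. First I would invoke Corollary~\ref{cor:kd}: for a fixed diagram $D\in\cD=B_{2d}^0$, any two permutations $\pi_1,\pi_2$ with $D_0\pi_1=D_0\pi_2=D$ satisfy $\pi_1=\sigma\pi_2$ for some $\sigma\in C$, whence $\kappa_{\pi_1}=\kappa_{\pi_2}$. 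This is exactly what makes Definition~\ref{def:kd} legitimate, so the symbol $\kappa_D$ is unambiguous.

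Next I would observe that the map $\pi\mapsto D_0\pi$ is a surjection from $\Sym_{2d}$ onto $B_{2d}^0$: indeed, as noted in the discussion preceding Definition~\ref{def:c}, $\Sym_{2d}$ acts transitively (on the right, via multiplication of Brauer diagrams) on the set of pair-partitions of $\{1,\dots,2d\}$, and $D_0$ corresponds to the pair-partition $\{1,2\}\amalg\dots\amalg\{2d-1,2d\}$. Hence every Brauer diagram $D:2d\to 0$ arises as $D_0\pi$ for some $\pi$, and conversely $\kappa_D=\kappa_\pi$ whenever $D=D_0\pi$. Therefore the sets $\{\kappa_\pi\mid\pi\in\Sym_{2d}\}$ and $\{\kappa_D\mid D\in\cD\}$ coincide as subsets of $T^{2d}(V^*)$, and in particular they span the same $\La$-submodule. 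Combining this with Theorem~\ref{thm:fft-osp2} gives
\[
T^{2d}(V^*)^{\OSp(V)}=W^G=\sum_{\pi\in\Sym_{2d}}\La\kappa_\pi=\sum_{D\in\cD}\La\kappa_D,
\]
which is the assertion. The identity $T^{2d}(V^*)^{\OSp(V)}=W^G$ is merely the definition $W=T^{2d}(V^*)$ and $G=\OSp(V)$.

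There is essentially no obstacle here: the corollary is a bookkeeping consequence of Theorem~\ref{thm:fft-osp2} once Lemma~\ref{lem:ctriv} and Corollary~\ref{cor:kd} are in hand, the one mild point being to make sure the parametrisation $D\leftrightarrow C\pi$ of $B_{2d}^0$ by cosets is used consistently (so that no diagram is missed and none is double-counted in a way that would change the span — though double-counting is harmless for a spanning statement). I would keep the proof to two or three lines, citing Corollary~\ref{cor:kd}, Definition~\ref{def:kd}, and Theorem~\ref{thm:fft-osp2}, and noting the surjectivity of $\pi\mapsto D_0\pi$.
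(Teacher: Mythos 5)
Your proposal is correct and follows essentially the same route as the paper: the paper's proof simply notes that $\kappa_\pi=\kappa_D$ for $D=D_0\pi$ (well defined by Corollary~\ref{cor:kd}) and then cites Theorem~\ref{thm:fft-osp2}. Your additional remark on the surjectivity of $\pi\mapsto D_0\pi$ just makes explicit a point the paper leaves implicit; there is no substantive difference.
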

\begin{proof}
For each permutation $\pi\in\Sym_{2d}$, we have $\kappa_\pi=\kappa_D$ where $D$ is the diagram $D_0\pi$.
The result is now immediate from Theorem \ref{thm:fft-osp2}.
\end{proof}

\subsection{The second fundamental theorem for $\GL(V)$-second formulation} Recall that when discussing
$\GL(V)$, we take $\sdim(V_\C)=(m|\ell)$.
We now interpret the second fundamental theorem for $\GL(V)$ in terms of the second formulation of the
first fundamental theorem. In the notation of Theorem \ref{thm:fft-gl2}, to describe the space $\left((T^r(V^*)\ot T^r(V))^*\right)^{\GL(V)}$,
we need to
describe all $\La$-linear relations among the functions $\delta_\pi$ (see \eqref{eq:defdelta}). The result is as follows.

\begin{theorem}\label{thm:sft-gl2} In the notation above,
all linear relations among the $\delta_\pi$ are consequences of those of the form
$$
\sum_{\pi\in\Sym_{r}}a_\pi\delta_\pi=0,
$$
where $\sum_{\pi\in\Sym_{r}}a_\pi\pi\in I(\mu)$ for some partition $\mu$ of $r$, which contains an
$(m+1)\times(\ell+1)$ rectangle.
\end{theorem}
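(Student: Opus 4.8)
The plan is to deduce Theorem~\ref{thm:sft-gl2} directly from Corollary~\ref{cor:sftgl} (equivalently Theorem~\ref{thm:BR2}) by transporting the kernel of $\varpi_r$ through the commutative diagram \eqref{eq:diag1}. Recall that the maps $e^{(r)}$ and $f^{(r)}$ are $\GL(V)$-equivariant isomorphisms of graded $\La$-modules, so that $\alpha_r=e^{(r)}\circ (f^{(r)})\inv\circ\varpi_r$ is a well-defined $\La$-linear map $\La\Sym_r\lr\left(T^r(V^*)\ot_\La T^r(V)\right)^*$. By Lemma~\ref{lem:epi} and Lemma~\ref{lem:epie}, $\alpha_r(\pi)=e^{(r)}(E_\pi)=\delta_\pi$ for every $\pi\in\Sym_r$; hence $\alpha_r$ is precisely the map sending the basis element $\pi$ to the function $\delta_\pi$, and by Corollary~\ref{cor:equ} it is surjective onto the $\GL(V)$-invariants. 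Since $e^{(r)}$ and $f^{(r)}$ are isomorphisms, $\Ker(\alpha_r)=e^{(r)}\big((f^{(r)})\inv(\Ker\varpi_r)\big)$ has the same image in $\La\Sym_r$ as $\Ker(\varpi_r)$ under the identification $\La\Sym_r\cong\La\Sym_r$; concretely, $\sum_\pi a_\pi\delta_\pi=0$ in $\left(T^r(V^*)\ot_\La T^r(V)\right)^*$ if and only if $\sum_\pi a_\pi\pi\in\Ker(\varpi_r)$.

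First I would record the identity $\alpha_r(\pi)=\delta_\pi$, which is immediate from Lemmas~\ref{lem:epi} and \ref{lem:epie} together with the definition of $\alpha_r$ as the composite around the diagram. Next I would observe that a linear relation $\sum_{\pi\in\Sym_r}a_\pi\delta_\pi=0$ among the $\delta_\pi$ is, by $\La$-linearity of $\alpha_r$, exactly the statement $\alpha_r\big(\sum_\pi a_\pi\pi\big)=0$, i.e. $\sum_\pi a_\pi\pi\in\Ker(\alpha_r)$. Then, using that $f^{(r)}$ and $e^{(r)}$ are isomorphisms, $\Ker(\alpha_r)=\Ker(\varpi_r)$ as submodules of $\La\Sym_r$. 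Finally I would invoke Corollary~\ref{cor:sftgl} (or Theorem~\ref{thm:BR2}): $\Ker(\varpi_r)$ is $0$ when $r\le m\ell+m+\ell$, and otherwise equals $\bigoplus_\mu I(\mu)$, the sum over partitions $\mu$ of $r$ that contain an $(m+1)\times(\ell+1)$ rectangle — equivalently the two-sided ideal generated by the Young symmetriser $e(m,\ell)$. Substituting this description of $\Ker(\varpi_r)$ into the correspondence above yields exactly the assertion of Theorem~\ref{thm:sft-gl2}: every relation $\sum a_\pi\delta_\pi=0$ has $\sum a_\pi\pi\in I(\mu)$ for such a $\mu$, and conversely each such element of $\La\Sym_r$ furnishes a relation; so these relations (which are the images of the generators of the ideal $\Ker\varpi_r$, together with their $\La\Sym_r$-bimodule translates) span all relations.

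The only genuine content beyond bookkeeping is the verification that $\alpha_r(\pi)=\delta_\pi$ and that the diagram \eqref{eq:diag1} commutes with all arrows $\GL(V)$-equivariant isomorphisms where claimed; but both of these have already been established in the excerpt (Lemmas~\ref{lem:epi}, \ref{lem:epie}, Corollary~\ref{cor:equ}), so in the present write-up they may simply be cited. The step I would expect to require the most care — though it is still routine — is making the passage from "$\Ker(\varpi_r)$ as an ideal of $\La\Sym_r$" to "relations among the $\delta_\pi$" completely precise: one must note that $\alpha_r$ is only an $\La$-module map (not an algebra map), so a priori $\Ker(\alpha_r)$ is just an $\La$-submodule; it is the identification $\Ker(\alpha_r)=\Ker(\varpi_r)$, forced by the bijectivity of $e^{(r)}$ and $f^{(r)}$, that upgrades it to the two-sided ideal $\bigoplus_\mu I(\mu)$ and thereby licenses the phrase "consequences of those of the form $\dots$" — i.e. that the relations coming from $I(\mu)$ for rectangle-containing $\mu$ generate all relations under the natural $\La\Sym_r$-bimodule (left and right multiplication) action on relations. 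Everything else is a direct translation through the diagram.
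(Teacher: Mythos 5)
Your proposal is correct and follows essentially the same route as the paper: it transports $\Ker(\varpi_r)$ through the commutative diagram \eqref{eq:diag1} using the isomorphisms $e^{(r)}$ and $f^{(r)}$, identifies $\alpha_r(\pi)=\delta_\pi$ via Lemmas~\ref{lem:epi} and \ref{lem:epie}, and then invokes Theorem~\ref{thm:BR2}/Corollary~\ref{cor:sftgl} to describe the kernel. The paper's own proof is precisely this argument, phrased via the isomorphism $\Phi:\End_{\GL(V)}(T^r(V))\to(T^r(V^*)\ot T^r(V))^{\GL(V)}$ with $\Phi(\varpi_r(\pi))=\delta_\pi$.
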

\begin{proof}
In the diagram \eqref{eq:diag1}, $\alpha_r:\La\Sym_r\lr{\left(T^r(V^*)\ot_\La T^r(V)\right)^*}^{\GL(V)}$
is defined so as to make the diagram commute. By commutativity, $\Ker(\alpha_r)=\Ker(\varpi_r)=\sum I(\mu)$,
over those partitions $\mu$ which contain an $(m+1)\times(\ell+1)$ rectangle.

The commutativity of the diagram \eqref{eq:diag1}, together with Lemmas \ref{lem:vpaction} and \ref{lem:epie},
imply that we have an isomorphism of $\La$-modules $\Phi:\End_{\GL(V)}(T^r(V))\lr (T^r(V^*)\ot T^r(V))^{\GL(V)}$,
such that $\Phi(\varpi_r(\pi))=\delta_\pi$ for each permutation $\pi\in\Sym_r$. It follows that for elements $a_\pi\in\La$,
$\sum_{\pi\in\Sym_{r}}a_\pi\delta_\pi=0$ if and only if $\sum_{\pi\in\Sym_{r}}a_\pi\varpi_r(\pi)=0$, and by the
first formulation, the last relation holds if and only if
$\sum_{\pi\in\Sym_{r}}a_\pi\pi\in I(\mu)$ for some partition $\mu$ of $r$, which contains an
$(m+1)\times(\ell+1)$ rectangle.
\end{proof}

\section{The second fundamental theorem for $\OSp(V)$}

In this section we assume that $\sdim (V_\C)=(m|2n)$, and we maintain the standard notation $V=V_\C\ot_\C\La$, and
$G=\OSp(V)$. The orthosymplectic form on $V$ is denoted $(-,-)$, and the $\La$-linear involution $A\mapsto A^\dag$
on $\End(V)$ is defined by $(Av,w)=(-1)^{[A][v]}(v,A^\dag w)$ for all $v,w\in V$ and $A\in\End(V)$.

\subsection{Canonical identifications}
We recall the following facts from \cite[\S 3.4]{LZ6}. For $v\in V$, the element $\phi_v\in V^*$ is defined by
$\phi_v(w)=(v,w)$ ($v,w\in V$). Then for $g\in\GL(V)$, we have $g\phi_v=\phi_{\hat g v}$, where $\hat g=(g^\dag)\inv$.
Further, we have a canonical isomorphism of $\GL(V)$-modules $\zeta^*:V^*\ot V^*\lr \End(V)$, where
$\zeta^*(\phi_v\ot\phi_w):x\mapsto v(w,x)$ (for $x,v,w\in V$). Here the action of $\GL(V)$ on $\End(V)$ is given by
$g:A\mapsto \hat g Ag\inv$ for $g\in\GL(V)$ and $A\in\End(V)$.

Recall \cite[Lemma 3.22]{LZ6} that the isomorphism $\zeta^*$ satisfies $\zeta^*\circ \tau=^\dag\circ\zeta^*$,
where $\tau$ is the usual graded involution defined by $\tau(\phi\ot\psi)=(-1)^{[\phi][\psi]}\psi\ot\phi$.
That is, for $\phi,\psi\in V^*$, we have $\zeta^*(\tau(\phi\ot\psi))=(\zeta^*(\phi\ot\psi))^\dag$. It follows that
if $S^2(V^*)$ and $\wedge^2(V^*)$ are respectively the $+1$ and $-1$ eigenspaces of $\tau$,
and $\End_\La(V)^\pm$ are the $\pm 1$ eigenspaces of $^\dag$, then $\zeta^*$ defines isomorphisms
$S^2(V^*)\to\End_\La(V)^+$ and $\wedge^2(V^*)\to\End_\La(V)^-$.

We shall make crucial use of the following map.

\begin{definition}\label{def:omega}
Define the map $\omega:\End_\La(V)\to\End_\La(V)^+$ by $\omega(A)=A^\dag A$. We shall generally restrict $\omega$
to $\End_\La(V)_{\bar0}$ without changing the notation.
\end{definition}

The following observation will be useful. Let $r=2d$ be even. Then we have the canonical $\GL(V)$-isomorphism
\be\label{eq:ci}
T^d(\zeta^*)\ot T^{2d}(\id_V):T^{2d}(V^*)\ot T^{2d}(V)\lr T^d(\End(V))\ot T^{2d}(V).
\ee

\begin{lemma}\label{lem:endinv}
We have the following commutative diagram
\begin{displaymath}
\xymatrix{
        &T^{2d}(V^*)\ot T^{2d}(V)\ar[dr]_{\langle-,-\rangle} \ar@{->}[rr] &&T^d(\End(V))\ot T^{2d}(V)\ar[dl]^{\mu}\\
							&&\La&
}
\end{displaymath}
where the horizontal arrow is as in \eqref{eq:ci},
$\langle-,-\rangle\in \left((T^{2d}(V^*)\ot T^{2d}(V))^*\right)^{\GL(V)}$ is the form defined in
Definition \ref{def:brace} above,  and $\mu(A_1\ot\dots\ot A_d\ot v_1\ot\dots\ot v_{2d})=
(-1)^{J(\ul A,\ul v)}(v_1,A_1v_2)\dots (v_{2d-1},A_dv_{2d})$ for $A_i\in\End(V)$ and $v_j\in V$,
with $J(\ul A,\ul v)=\sum_{i=1}^d[A_i](\sum_{j=1}^{2i-1}[v_j])$.
\end{lemma}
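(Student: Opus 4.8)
The statement to be proved is the commutativity of a triangle; equivalently, it asserts that for all $\ul\phi=\phi_1\ot\dots\ot\phi_{2d}\in T^{2d}(V^*)$ and $\ul v=v_1\ot\dots\ot v_{2d}\in T^{2d}(V)$ one has
\[
\langle\ul\phi,\ul v\rangle=\mu\bigl((T^d(\zeta^*)\ot T^{2d}(\id_V))(\ul\phi\ot\ul v)\bigr).
\]
Since both sides are $\La$-multilinear in the $\phi_i$ and the $v_j$, it suffices to verify the identity on decomposable tensors, and moreover on homogeneous such tensors, so that the sign bookkeeping via the function $J$ is meaningful. The plan is therefore to fix homogeneous $\ul\phi,\ul v$, compute the right-hand side explicitly using the definition of $\zeta^*$, and match it term by term with the definition of $\langle-,-\rangle$ in Definition~\ref{def:brace}.

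\textbf{Key steps.} First I would invoke the fact, recalled just before Lemma~\ref{lem:endinv}, that every $\phi\in V^*$ is of the form $\phi_v$ for a unique $v\in V$ (this is the nondegeneracy of $(-,-)$), so we may write $\phi_{2i-1}=\phi_{x_i}$ and $\phi_{2i}=\phi_{y_i}$ for homogeneous $x_i,y_i\in V$ with $[x_i]=[\phi_{2i-1}]$ and $[y_i]=[\phi_{2i}]$. Second, I would apply $T^d(\zeta^*)$ to the $V^*$-factors: by definition $\zeta^*(\phi_{x_i}\ot\phi_{y_i})$ is the endomorphism $A_i:x\mapsto x_i(y_i,x)$, so the image in $T^d(\End(V))\ot T^{2d}(V)$ is $A_1\ot\dots\ot A_d\ot v_1\ot\dots\ot v_{2d}$ with $[A_i]=[x_i]+[y_i]$. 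Third, I would feed this into $\mu$: each factor $(v_{2i-1},A_iv_{2i})$ equals $(v_{2i-1},x_i)(y_i,v_{2i})$ by the definition of $A_i$ and $\La$-bilinearity \eqref{eq:bil} of the form, while the product of these over $i$, together with the $\La$-scalars that get pulled past odd elements, produces a Koszul sign. Fourth, I would compare with $\langle\ul\phi,\ul v\rangle=(-1)^{J(p(\ul\phi),p(\ul v))}\phi_1(v_1)\dots\phi_{2d}(v_{2d})$, where $\phi_{2i-1}(v_{2i-1})=(x_i,v_{2i-1})$ and $\phi_{2i}(v_{2i})=(y_i,v_{2i})$; note that by supersymmetry $(v_{2i-1},x_i)=(-1)^{[x_i][v_{2i-1}]}(x_i,v_{2i-1})$, which accounts for precisely the discrepancy between the two orderings of the scalars. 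The whole proof then reduces to the single assertion that the sign produced on the right-hand side — namely $(-1)^{J(\ul A,\ul v)}$ with $J(\ul A,\ul v)=\sum_{i=1}^d[A_i]\sum_{j=1}^{2i-1}[v_j]$, combined with the transposition signs $\sum_i[x_i][v_{2i-1}]$ and whatever Koszul signs arise from commuting the $y_i$-scalars leftward past the intervening odd $v_j$'s — agrees in $\Z/2\Z$ with $J(p(\ul\phi),p(\ul v))=\sum_{i>j}[\phi_i][v_j]$, after substituting $[\phi_{2i-1}]=[x_i]$, $[\phi_{2i}]=[y_i]$, $[A_i]=[x_i]+[y_i]$.

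\textbf{Main obstacle.} The only real work is this last sign verification; everything else is formal. The difficulty is that the signs arise from three independent sources — the definition of $\mu$ via $J(\ul A,\ul v)$, the Koszul signs incurred when the scalar $(y_i,v_{2i})$ (of parity $[y_i]+[v_{2i}]$, but contributing as $[y_i]$ after the form is evaluated) is moved into position past $v_1,\dots,v_{2i-1}$, and the supersymmetry swaps $(v_{2i-1},x_i)\leftrightarrow(x_i,v_{2i-1})$ — and one must check they telescope correctly. The cleanest route is probably to expand $J(p(\ul\phi),p(\ul v))=\sum_{i>j}[\phi_i][v_j]$ by splitting the index $i$ into its even and odd values, writing $[\phi_{2i}]=[y_i]$ and $[\phi_{2i-1}]=[x_i]$, and then reorganizing the double sum so that the coefficient of $[v_j]$ is $\sum_{i:\,2i-1>j}[x_i]+\sum_{i:\,2i>j}[y_i]$; comparing this with the coefficient of $[v_j]$ in the right-hand side's exponent and using $[A_i]=[x_i]+[y_i]$ should make the two sides visibly equal once one is careful about the boundary terms $j=2i-1$ and $j=2i$. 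I expect this to be a short but slightly fiddly computation, of exactly the type the authors elsewhere label "a straightforward exercise," and I would present it as such after displaying the two sign expressions side by side.
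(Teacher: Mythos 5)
Your proposal is correct and follows essentially the same route as the paper's proof: reduce by $\La$-linearity to decomposable homogeneous tensors whose $V^*$-factors are written as $\phi_{x_i},\phi_{y_i}$ via the form, apply $\zeta^*$ pairwise, and match the two sides directly (the paper simply declares this verification ``straightforward''). The sign identity you isolate does hold, since $J(p(\ul\phi),p(\ul v))\equiv J(\ul A,\ul v)+\sum_i[x_i][v_{2i-1}]\pmod 2$ and the $\La$-scalars occur in the same order on both sides, so no further Koszul reordering signs actually arise.
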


\begin{proof}
Note first that it is easily verified that $\mu\in  \left((T^{2d}(V^*)\ot T^{2d}(V))^*\right)^{\GL(V)}$.
It is straightforward to verify the commutativity of the diagram on elements of the form
$\phi_{v_1,w_1}\ot\dots\phi_{v_d,w_d}\ot  x_1\ot\dots\ot x_{2d}$, where $\phi_{v,w}=\zeta^*(\phi_v\ot\phi_w)$.
The result now follows by $\La$-linearity.
\end{proof}

\begin{corollary}\label{cor:dpi2}
For $\pi\in\Sym_{2d}$, the element $\delta_\pi$ is realised on $\End(V))\ot T^{2d}(V)$ by the formula
$$
\begin{aligned}
\delta_\pi(A_1\ot\dots&\ot A_d\ot w_1\ot\dots\ot w_{2d})\\&=(-1)^{J(\ul A,\ul w_{\pi\inv})+n(\pi\inv,\ul w)}
(w_{\pi\inv 1},A_1w_{\pi\inv 2})\dots(w_{\pi\inv(2d-1)},A_dw_{\pi\inv(2d)}),\\
\end{aligned}
$$
where $n(\pi\inv,\ul w)$ is as in Definition \ref{def:n} and $\ul w_{\pi\inv}=w_{\pi\inv 1}\ot\dots\ot w_{\pi\inv(2d)}$.
\end{corollary}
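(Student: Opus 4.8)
The plan is to combine \lemref{lem:endinv}, which gives the geometric factorisation $\langle-,-\rangle = \mu\circ(T^d(\zeta^*)\ot\id)$, with the definition $\delta_\pi(\ul\psi\ot\ul w)=\langle\ul\psi,\varpi_{2d}(\pi)(\ul w)\rangle$ from \eqref{eq:defdelta}, and then simply transport the formula through the isomorphism \eqref{eq:ci}. Concretely, I would start from an element $\ul A\ot\ul w = A_1\ot\dots\ot A_d\ot w_1\ot\dots\ot w_{2d}$ lying in $T^d(\End(V))\ot T^{2d}(V)$, and (by $\La$-bilinearity) reduce to the case $A_i=\phi_{v_i,w_i'}=\zeta^*(\phi_{v_i}\ot\phi_{w_i'})$, so that the preimage under \eqref{eq:ci} is an explicit decomposable tensor $\ul\psi\ot\ul w$ in $T^{2d}(V^*)\ot T^{2d}(V)$.

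Next I would apply $\delta_\pi$ in the form \eqref{eq:defdelta}: first act by $\varpi_{2d}(\pi)$ on $\ul w$, for which \lemref{lem:vpaction} tells us precisely that $\varpi_{2d}(\pi)(\ul w) = (-1)^{n(\pi\inv,\ul w)}\,w_{\pi\inv 1}\ot\dots\ot w_{\pi\inv(2d)}$, contributing the sign $(-1)^{n(\pi\inv,\ul w)}$ and permuting the $w_j$'s by $\pi\inv$. Then I would pair with $\ul\psi$ using $\langle-,-\rangle$, and invoke \lemref{lem:endinv} to rewrite this pairing as $\mu$ applied to the image under \eqref{eq:ci}. Since \eqref{eq:ci} only touches the $V^*$-tensor factors (via $T^d(\zeta^*)$) and leaves the $V$-tensor factors alone — but the $V$-factors have been permuted by $\varpi_{2d}(\pi)$ — the net effect is that $\mu$ is evaluated on $A_1\ot\dots\ot A_d\ot w_{\pi\inv 1}\ot\dots\ot w_{\pi\inv(2d)}$. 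By the explicit formula for $\mu$ in \lemref{lem:endinv}, this yields $(-1)^{J(\ul A,\ul w_{\pi\inv})}(w_{\pi\inv 1},A_1w_{\pi\inv 2})\dots(w_{\pi\inv(2d-1)},A_dw_{\pi\inv(2d)})$, where now $J(\ul A,\ul w_{\pi\inv})=\sum_{i=1}^d[A_i](\sum_{j=1}^{2i-1}[w_{\pi\inv j}])$ as stated. Multiplying by the sign $(-1)^{n(\pi\inv,\ul w)}$ from the $\varpi_{2d}(\pi)$ step gives exactly the claimed formula.

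The main obstacle — really the only non-formal point — is the careful bookkeeping of Koszul signs when $\varpi_{2d}(\pi)$ reorders the $V$-factors: one must check that no additional sign is introduced when feeding the permuted tensor $\varpi_{2d}(\pi)(\ul w)$ into the composite $\mu\circ(T^d(\zeta^*)\ot\id)$ beyond the $(-1)^{n(\pi\inv,\ul w)}$ already accounted for, and that the argument of $J$ in $\mu$ is correctly read off as the parity sequence $p(\ul w_{\pi\inv})$ rather than $p(\ul w)$. Since \lemref{lem:endinv} was verified on elements of the precise form $\phi_{v_1,w_1}\ot\dots\ot\phi_{v_d,w_d}\ot x_1\ot\dots\ot x_{2d}$ and both $\delta_\pi$ and $\mu$ are $\La$-linear, the reduction to such elements is legitimate and the remaining verification is a finite sign computation in $\Z/2\Z$. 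I would therefore present the proof briefly: state the reduction to $A_i=\phi_{v_i,w_i'}$, invoke \lemref{lem:endinv} and \lemref{lem:vpaction}, and remark that the sign identity $J(p(\ul\psi),p(\ul w))$ transforms into $J(\ul A,\ul w_{\pi\inv})+n(\pi\inv,\ul w)$ under these identifications, which is the content of the displayed formula.

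\begin{proof}
By $\La$-bilinearity of both sides in the arguments $A_i$ and $w_j$, it suffices to verify the formula when each $A_i=\phi_{v_i,w_i'}:=\zeta^*(\phi_{v_i}\ot\phi_{w_i'})$ for $v_i,w_i'\in V$. The preimage of $A_1\ot\dots\ot A_d\ot w_1\ot\dots\ot w_{2d}$ under the isomorphism \eqref{eq:ci} is then $\ul\psi\ot\ul w$, where $\ul\psi=\phi_{v_1}\ot\phi_{w_1'}\ot\dots\ot\phi_{v_d}\ot\phi_{w_d'}\in T^{2d}(V^*)$ and $\ul w=w_1\ot\dots\ot w_{2d}$. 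By the definition \eqref{eq:defdelta} of $\delta_\pi$ together with \lemref{lem:vpaction},
\[
\delta_\pi(\ul\psi\ot\ul w)=\langle\ul\psi,\varpi_{2d}(\pi)(\ul w)\rangle
=(-1)^{n(\pi\inv,\ul w)}\langle\ul\psi,\;w_{\pi\inv 1}\ot\dots\ot w_{\pi\inv(2d)}\rangle.
\]
Now apply \lemref{lem:endinv} with the element $\ul\psi\ot(w_{\pi\inv 1}\ot\dots\ot w_{\pi\inv(2d)})$: since the horizontal map of \eqref{eq:ci} acts only on the $T^{2d}(V^*)$-factor via $T^d(\zeta^*)$ and fixes the $T^{2d}(V)$-factor, its image is $A_1\ot\dots\ot A_d\ot w_{\pi\inv 1}\ot\dots\ot w_{\pi\inv(2d)}$. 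Hence, by the commutativity of the diagram in \lemref{lem:endinv} and the explicit formula for $\mu$ given there,
\[
\langle\ul\psi,\;w_{\pi\inv 1}\ot\dots\ot w_{\pi\inv(2d)}\rangle
=\mu(A_1\ot\dots\ot A_d\ot w_{\pi\inv 1}\ot\dots\ot w_{\pi\inv(2d)})
=(-1)^{J(\ul A,\ul w_{\pi\inv})}\prod_{i=1}^d(w_{\pi\inv(2i-1)},A_iw_{\pi\inv(2i)}),
\]
where $J(\ul A,\ul w_{\pi\inv})=\sum_{i=1}^d[A_i]\bigl(\sum_{j=1}^{2i-1}[w_{\pi\inv j}]\bigr)$ and $\ul w_{\pi\inv}=w_{\pi\inv 1}\ot\dots\ot w_{\pi\inv(2d)}$. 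Combining the last two displays yields
\[
\delta_\pi(A_1\ot\dots\ot A_d\ot w_1\ot\dots\ot w_{2d})
=(-1)^{J(\ul A,\ul w_{\pi\inv})+n(\pi\inv,\ul w)}(w_{\pi\inv 1},A_1w_{\pi\inv 2})\dots(w_{\pi\inv(2d-1)},A_dw_{\pi\inv(2d)}),
\]
as claimed.
\end{proof}
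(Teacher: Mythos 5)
Your proof is correct and follows essentially the same route as the paper: the paper's own (very terse) proof simply cites the definition \eqref{eq:defdelta} together with Lemmas \ref{lem:vpaction} and \ref{lem:endinv}, which is exactly the chain you spell out, with the reduction to decomposable $A_i=\zeta^*(\phi_{v_i}\ot\phi_{w_i'})$ being the same device the paper already uses inside the proof of Lemma \ref{lem:endinv}. Your version just makes the sign bookkeeping explicit; no new idea or correction is needed.
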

\begin{proof}
From \eqref{eq:defdelta}, we have $\delta_\pi(\ul\phi\ot\ul w)=\langle\ul\phi,\varpi_r(\pi)\ul w\rangle$
for $\ul\phi\in T^{2d}(V^*)$. The statement is now immediate from Lemmas \ref{lem:vpaction} and \ref{lem:endinv}.
\end{proof}

\subsection{Some super ($\zt$-graded) geometry} We shall recall and complement some notions concerning graded-commutative algebraic
geometry. Background for the material in this section may be found in \cite[\S 3.2]{LZ6}.

Let $\Lambda$ be the infinite dimensional Grassmann algebra
as above, and write its generators as $e(1),e(2),\dots$. For any positive integer $N$, $\La(N)$ denotes the subalgebra of $\La$
generated by $e(1),e(2),\dots,e(N)$. Recall that $\La$ has a natural grading $\La=\oplus_{k=0}^\infty \La^k$, where $\La^k$ is the
subspace with basis $\{e(i_1)e(i_2)\dots e(i_k)\mid i_1<i_2<\dots<i_k\}$ and is a $\zt$-graded algebra, with
$\La_{\bar0}=\oplus_{k\text{ even}}\La^k$ and $\La_{\bar1}=\oplus_{k\text{ odd}}\La^k$.

Let $M_\C$ be a $\zt$-graded $\C$-vector space with $\sdim(M_\C)=(k|l)$. We then have the superspace $M:=M_\C\ot_\C\La$. The (graded) symmetric algebra
$S(M)$ is defined as $S(M)=T(M)/I(M)=\oplus_{r=0}^\infty S^r(M)$, where $T(M)=\oplus_{j=0}^\infty M^{\ot_\La r}$ is the tensor algebra,
and $I(M)$ is the ideal of $T(M)$ generated by elements of the form $v\ot w- (-1)^{[v][w]}w\ot v\in T^2(V)$. It was shown in \cite[loc. cit.]{LZ6}
that $S^r(M)$ may be identified with $T^r(M)/\Sym_r$, where $\Sym_r$ acts via $\varpi_r$ as above.

If, for any set $U$, $\CF(U,\La)$ denotes the $\La$-module of functions $f:U\to\La$, then we have a map $F^r:S^r(M^*)\lr\CF(M_{\bar0},\La)$
which was shown in \cite[loc. cit.]{LZ6} to be injective. The map is defined as follows. 
For $\phi_1,\phi_2,\dots,\phi_r\in M^*$, denote
by $\ol{\phi_1\ot\dots\ot\phi_r}$ the image in $S^r(M^*)$ of $\phi_1\ot\dots\ot\phi_r\in T^r(M^*)$. The for any $m\in M_{\bar0}$, we have 
$F^r(\ol{\phi_1\ot\dots\ot\phi_r})(m):=\langle\phi_1\ot\dots\ot\phi_r,m\ot m\ot\dots\ot m\rangle$, where the bracket is defined in Definition
\ref{def:brace}. This definition depends only on $\ol{\phi_1\ot\dots\ot\phi_r}\in S^r(M^*)$, not ${\phi_1\ot\dots\ot\phi_r}$.
The image $F^r(S^r(M^*)):=\CP^r[M_{\bar0}]$, the $\La$-module of polynomial functions of degree $r$ on $M_{\bar0}$. Let $\CP[M_{\bar0}]:=\oplus_{r= 0}^\infty\CP^r[M_{\bar0}]$, 
which forms an algebra, the algebra of polynomial functions on $M_{\bar0}$.

Now let $b_1,\dots,b_{k+l}$ be a homogeneous basis of $M_\C$, such that $b_1,\dots,b_k\in (M_\C)_{\bar0}$
while $b_{k+1},\dots,b_{k+l}\in (M_\C)_{\bar1}$. Let $X_1,\dots,X_{k+l}$ be the dual basis of $M_\C$. Then
$(b_i),(X_i)$ are also dual homogeneous bases of the $\La$-module $M$. It was shown in \cite[\S 3.2]{LZ6} that
a polynomial $f\in\CP^r[M_{\bar0}]$ may be represented (uniquely) in the form
\be\label{eq:poly}
f=\sum_{m_1+\dots+m_{k+l}=r}\la_{m_1m_2\dots m_{k+l}}X_1^{m_1}X_2^{m_2}\dots X_{k+l}^{m_{k+l}},
\ee
where the sum is over non-negative itegers $m_i$,  $\la_{m_1m_2\dots m_{k+l}}\in\La$, and $m_j=0\text{ or }1$ if $k+1\leq j\leq k+l$.
The value of this function $f$ at $m=\sum_{j=1}^{k+l}b_j\mu_j\in M_{\bar0}$ is given by
\be\label{eq:fm}
f(m)=\sum_{m_1+\dots+m_{k+l}=r}\la_{m_1m_2\dots m_{k+l}}\mu_1^{m_1}\mu_2^{m_2}\dots \mu_{k+l}^{m_{k+l}}.
\ee

A subset $U\subseteq M_{\bar0}$ is said to be {\em dense} if, for any polynomial function
$f\in\CP[M_{\bar0}]$, we have $f(U)=0$
implies that $f=0$.

We shall give a suffcient condition for a subset to be dense. Before stating it, we make some observations
about $M_{\bar0}$. Notice that $M_{\bar0}=(M_\C)_{\bar0}\ot_\C\La_{\bar0}\oplus (M_\C)_{\bar1}\ot_\C\La_{\bar1}$,
and that $(M_\C)_{\bar0}\ot_\C\La_{\bar0}\supseteq (M_\C)_{\bar0}=\C^k$ 
since $\La^0=\C$. Let $\Lambda(\text{$>$$N$})=\langle e(n)\mid n>N\rangle$ 
be the subalgebra of $\Lambda$ generated by $\{e(n)\mid n>N\}$. 
Now let us say that the subset $U\subseteq M_{\bar0}$ has property D if
\be\label{eq:d1}
u\in U \text { and }\beta\in\C\implies\beta u\in U,
\ee
and for each integer $N\geq 0$ there are elements
$\nu_1,\dots,\nu_l\in\Lambda(\text{$>$$N$})\cap\La_{\bar1}$ such that the products $\nu_{i_1}\nu_{i_2}\dots\nu_{i_j}$ with $i_1<\dots<i_j$ for all $j\le \ell$ are linearly independent (in particular, $\nu_1\nu_2\dots\nu_l\neq 0$)
and a Zariski dense subset $U^0\subseteq \C^k(=(M_\C)_{\bar0}\ot\La^0)$ such that for each element $(\beta_1,\dots,\beta_k)\in U^0$, we have
\be\label{eq:d2}
b_1\beta_1+\dots+b_k\beta_k+b_{k+1}\nu_1+b_{k+2}\nu_2+\dots+b_{k+l}\nu_l\in U.
\ee

\begin{lemma}\label{lem:dense}
Suppose $U\subseteq M_{\bar0}$ satisfies the conditions \eqref{eq:d1} and \eqref{eq:d2} above. Then $U$ is dense in $M_{\bar0}$.
\end{lemma}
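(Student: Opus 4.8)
The plan is to prove that $U$ is dense by showing that any polynomial function $f\in\CP[M_{\bar0}]$ vanishing on $U$ must be the zero polynomial. Write $f$ in the normal form \eqref{eq:poly}, so $f=\sum_{\ul m}\la_{\ul m}X_1^{m_1}\dots X_{k+l}^{m_{k+l}}$ with $\la_{\ul m}\in\La$ and $m_j\in\{0,1\}$ for $k+1\le j\le k+l$. The aim is to deduce $\la_{\ul m}=0$ for every multi-index $\ul m$. First I would fix an integer $N$ large enough that all the coefficients $\la_{\ul m}$ lie in $\La(N)$; this is possible since there are only finitely many nonzero $\la_{\ul m}$ and each lies in some $\La(N_i)$. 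Then I invoke property D to produce the odd elements $\nu_1,\dots,\nu_l\in\Lambda(\text{$>$$N$})\cap\La_{\bar1}$ with the stated linear independence of their ordered products, together with the Zariski-dense set $U^0\subseteq\C^k$.

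Next I evaluate $f$ at the points $u(\ul\beta)=b_1\beta_1+\dots+b_k\beta_k+b_{k+1}\nu_1+\dots+b_{k+l}\nu_l\in U$ for $(\beta_1,\dots,\beta_k)\in U^0$, using \eqref{eq:fm}. Since the odd coordinates of $u(\ul\beta)$ are the fixed elements $\nu_1,\dots,\nu_l$ and, when $k+1\le j\le k+l$, only $m_j=0,1$ occur, the monomial $X_1^{m_1}\dots X_{k+l}^{m_{k+l}}$ evaluates to $\beta_1^{m_1}\dots\beta_k^{m_k}\cdot\nu_1^{m_{k+1}}\dots\nu_l^{m_{k+l}}$. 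Grouping the sum according to the "odd part" $S=\{j-k\mid k+1\le j\le k+l,\ m_j=1\}\subseteq\{1,\dots,l\}$, we obtain
\[
0=f(u(\ul\beta))=\sum_{S\subseteq\{1,\dots,l\}}\left(\prod_{i\in S}\nu_i\right)\cdot g_S(\ul\beta),
\]
where, for each $S$, $g_S(\ul\beta)=\sum_{(m_1,\dots,m_k)}\la_{(m_1,\dots,m_k,S)}\beta_1^{m_1}\dots\beta_k^{m_k}$ is a polynomial in $\beta_1,\dots,\beta_k$ with coefficients in $\La(N)$. (One must commute the even products $\beta_i^{m_i}\in\C$ past everything, which is harmless, and keep track of signs when reordering the odd $\nu_i$; the products $\prod_{i\in S}\nu_i$ can be taken in increasing order of index.)

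The crucial point now is a linear independence argument over $\La(N)$. Since $\nu_1,\dots,\nu_l\in\Lambda(\text{$>$$N$})$ and their ordered products $\prod_{i\in S}\nu_i$ (over all $S$) are linearly independent over $\C$, and since $\La(N)$ and $\Lambda(\text{$>$$N$})$ are "independent" subalgebras of $\La$ — their product map $\La(N)\ot_\C\Lambda(\text{$>$$N$})\to\La$ is injective because the $e(n)$ for $n\le N$ and $n>N$ together freely generate $\La$ — it follows that the family $\{\prod_{i\in S}\nu_i\}_{S}$ is linearly independent over $\La(N)$, i.e. over the ring in which the coefficients of the $g_S$ live. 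Hence the displayed vanishing forces $g_S(\ul\beta)=0$ for every $S$ and every $\ul\beta\in U^0$. But $g_S$ is an ordinary polynomial in $k$ commuting variables with coefficients in the $\C$-vector space $\La(N)$, and $U^0$ is Zariski dense in $\C^k$; therefore $g_S\equiv 0$ as a polynomial, which means $\la_{(m_1,\dots,m_k,S)}=0$ for all $(m_1,\dots,m_k)$ and all $S$. Since every multi-index $\ul m$ arises this way (property $m_j\in\{0,1\}$ for the odd slots is exactly the normal form), we get $\la_{\ul m}=0$ for all $\ul m$, so $f=0$. The condition \eqref{eq:d1} is used only to ensure $0\in U$ and to allow rescaling, which is convenient but the homogeneous-degree decomposition of $\CP[M_{\bar0}]$ plus the above already suffices. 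The main obstacle — really the only subtle point — is justifying the linear independence of the odd products $\prod_{i\in S}\nu_i$ over the coefficient ring $\La(N)$ rather than merely over $\C$; this is where the separation of generators $e(n)$, $n\le N$, from $e(n)$, $n>N$, in the Grassmann algebra, and the hypothesis that the ordered products of the $\nu_i$ are $\C$-linearly independent, are both essential.
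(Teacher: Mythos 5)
Your proof is correct and follows essentially the same route as the paper's: evaluate $f$ at points whose even coordinates range over the Zariski-dense set $U^0$ and whose odd coordinates are the fixed $\nu_i$, separate the resulting identity according to the odd monomials $\prod_{i\in S}\nu_i$ (linearly independent over $\La(N)$ because they lie in $\La({>}N)$), and apply Zariski density in $\C^k$ to each resulting even-variable polynomial $g_S$. The only differences are that you make explicit the $\La(N)$-linear-independence step, which the paper compresses into ``by choice of the $\nu_i$ it is clear that\dots'', and that you correctly note the paper's preliminary reduction to homogeneous $f$ via \eqref{eq:d1} is dispensable, since the Zariski-density argument applies equally to inhomogeneous polynomials.
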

\begin{proof}
Suppose $f\in\CP[M_{\bar0}]$ vanishes on $U$, and that $f=f_0+f_1+\dots+f_r$, where $f_i\in\CP^i[M_{\bar0}]$.
We need to show that $f$ vanishes on $M_{\bar0}$. Notice first that for $u\in U$ and $\beta\in\C$,
$f(\beta u)=f_0(u)+\beta f_1(u)+\beta^2f_2(u)+\dots+\beta^rf_r(u)$. Hence by the property \eqref{eq:d1}, $f(u)=0$ if and only if
$f_i(u)=0$ for all $i$. Hence we may assume that $f=f_r$ is homogeneous.

Now suppose that $f$ is as in \eqref{eq:poly}. Then there is an integer $N$ such that $\la_{m_1m_2\dots m_{k+l}}$ $\in\La(N)$.
Take $\nu=\sum_{j=1}^lb_{k+j}\nu_j$ as specified by the property \eqref{eq:d2}. Recall that $m_i=0$ or $1$ if $k+1\leq i\leq k+l$.
Fix a sequence $M_{k+1},\dots,M_{k+l}$ such that $\sum_j M_{k+j}=r-s\leq r$. We shall show that
$\la_{m_1\dots m_kM_{k+1}\dots M_{k+l}}=0$ for all sequences $m_1,\dots,m_k$ with $\sum_{i=1}^sm_i=s$.

But by choice of the $\nu_i$, it is clear that for all $(\beta_1,\dots,\beta_k)\in U^0$,
$$
\sum_{m_1+\dots+m_{k}=s}\la_{m_1\dots m_{k}M_{k+1}\dots M_{k+l}}\beta_1^{m_1}\beta_2^{m_2}\dots \beta_{k}^{m_{k}} =0.
$$
It follows since $U^0$ is Zariski dense in $\C^k$ that all coefficients $\la_{m_1\dots m_kM_{k+1}\dots M_{k+l}}$ are zero.
\end{proof}

We shall need the next two results later.

\begin{proposition}\label{prop:omegadense}
The image of the map $\omega:\End_\La(V)_{\bar0}\to \End_\La(V)_{\bar0}^+$ (see Definition \ref{def:omega}) above)
is dense in $\End_\La(V)_{\bar0}^+$.
\end{proposition}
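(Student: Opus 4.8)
The plan is to exhibit an explicit family of elements of $\End_\La(V)_{\bar0}$ whose images under $\omega$ form a dense subset of the target $\End_\La(V)_{\bar0}^+$, and then invoke Lemma~\ref{lem:dense}. The natural candidates are the elements $\id_V + A$ where $A$ ranges over a well-chosen subset of $\End_\La(V)_{\bar0}$: one computes $\omega(\id_V+A) = (\id_V+A)^\dag(\id_V+A) = \id_V + (A+A^\dag) + A^\dag A$. Since $A + A^\dag \in \End_\La(V)_{\bar0}^+$ is the ``linear part'' and $A^\dag A$ is higher order, the differential of $\omega$ at the identity is essentially $A\mapsto A+A^\dag$, which is surjective onto $\End_\La(V)_{\bar0}^+$ (take $A=\tfrac12 B$ for $B$ self-dual). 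This is the classical observation, in the supergroup setting, that $\OSp$ near the identity looks like the symmetric-square of the natural module.

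First I would identify $\End_\La(V)_{\bar0}^+$ as $M_{\bar0}$ for the appropriate $\C$-superspace $M_\C$: by the discussion following Definition~\ref{def:omega} (the isomorphism $\zeta^*$ carrying $\tau$ to ${}^\dag$), $\End_\La(V)^+\cong S^2(V^*)$, so $M_\C = S^2(V_\C^*)$, whose superdimension $(k|l)$ is readily computed from $\sdim V_\C = (m|2n)$. Then I would fix the homogeneous basis $b_1,\dots,b_{k+l}$ of $M_\C$ coming from the standard orthosymplectic basis $\cE$, with $b_1,\dots,b_k$ even and $b_{k+1},\dots,b_{k+l}$ odd, and verify condition \eqref{eq:d1}: if $u=\omega(A)$ and $\beta\in\C$, then $\beta^2 u = \omega(\sqrt\beta A)$ — but to stay inside $\C$ one instead writes $\beta u = \omega(\id+A')$-type elements and uses that the candidate set is closed under the needed scaling; more cleanly, since $\omega$ is a $\C$-homogeneous-degree-$2$ map one checks directly that $\beta\cdot\omega(A)$ lies in the image for all $\beta\in\C$ because $\omega(\beta^{1/2}A)$ makes sense over $\C$ only for $\beta\ge 0$ — so here I would instead take the candidate dense set $U$ to be a $\C$-cone by construction, e.g. $U = \{\omega(\id_V + tB) : B\in \End_\La(V)_{\bar0}^+,\ t\in\C\}$ suitably normalised, or simply note $-\omega(A) = \omega(iA)$ is not generally in the image and therefore phrase \eqref{eq:d1} via the parametrisation below, which manifestly gives a cone.

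The heart of the argument is verifying \eqref{eq:d2}. Given $N\ge 0$, I would choose odd Grassmann elements $\nu_1,\dots,\nu_l\in\Lambda(\text{$>$$N$})\cap\La_{\bar1}$ whose ordered products are linearly independent — e.g. $\nu_j = e(N+j)$. Writing a general element of $M_{\bar0}$ in the target coordinates as $\sum_i b_i\beta_i + \sum_j b_{k+j}\nu_j$, I must produce $A\in\End_\La(V)_{\bar0}$ with $\omega(A) = A^\dag A$ having exactly these coordinates, for $(\beta_1,\dots,\beta_k)$ ranging over a Zariski-dense $U^0\subseteq\C^k$. Taking $A = \id_V + B$ with $B\in\End_\La(V)_{\bar0}$ small in an appropriate sense, $\omega(A) = \id_V + (B+B^\dag) + B^\dag B$; since the odd part of $B$ squares into terms involving products $\nu_i\nu_j$, and since the self-dual-linear map $B\mapsto B+B^\dag$ restricted to $\End_\La(V)_{\bar0}^+$ is multiplication by $2$ (hence invertible over $\La_{\bar0}$), I can solve for $B$ recursively in the Grassmann filtration: pick the odd coordinates $\nu_j$ first, which forces the odd part $B_{\bar1}$ up to the linear term, then correct the even coordinates $\beta_i$ by adjusting $B_{\bar0}$, absorbing the quadratic feedback $B^\dag B$ order by order in powers of the $e(i)$'s (a finite process since everything lies in some $\La(N')$). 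The upshot is that \eqref{eq:d2} holds with $U^0 = \C^k$ itself, or at worst the complement of where the recursion's denominators (all equal to $2$) vanish — i.e. all of $\C^k$.

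\textbf{Main obstacle.} The genuinely delicate point is the interaction between the $\C$-scaling axiom \eqref{eq:d1} and the fact that $\omega(A)=A^\dag A$ is a positive/quadratic construction, so its image is not obviously a $\C$-cone — negative scalars are not absorbed by $A\mapsto \lambda A$ over $\C$. I expect to handle this by choosing the dense candidate set $U$ to be defined directly as a cone (parametrised so that \eqref{eq:d1} is built in) and then showing every point of that cone is hit by $\omega$; concretely, replacing $\omega(A)$ by $\omega(A)$ composed with multiplication by arbitrary even invertible scalars, or exploiting that $\id_V + (B+B^\dag) + \cdots$ already ranges over an affine-then-scaled family containing a full cone. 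The rest — closure under scaling at the level of $\La$-coordinates, the Grassmann-filtration recursion, and Zariski density of $U^0$ — is routine once the cone structure is pinned down, and then Lemma~\ref{lem:dense} applies verbatim.
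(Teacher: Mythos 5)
Your overall strategy --- verify conditions \eqref{eq:d1} and \eqref{eq:d2} for the image of $\omega$ and then quote Lemma~\ref{lem:dense} --- is exactly the paper's (which delegates that verification to the argument of \cite[Lemma 6.7]{LZ6}), but your verification has two problems. First, your ``main obstacle'' is illusory: the involution $\dag$ is $\La$-linear, not conjugate-linear, because the form $(-,-)$ is $\La$-bilinear rather than Hermitian. Hence $(cA)^\dag=cA^\dag$ for $c\in\C$, so $\omega(cA)=c^2\omega(A)$, and since every $\beta\in\C$ has a complex square root, $\beta\,\omega(A)=\omega(\beta^{1/2}A)$ lies in the image for \emph{every} $\beta\in\C$; condition \eqref{eq:d1} is immediate and there is no positivity or cone issue at all. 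In particular your assertion that ``$-\omega(A)=\omega(iA)$ is not generally in the image'' is false (the identity holds and the right-hand side is in the image by definition); the proposed repair by redefining $U$ as a cone is both unnecessary and left unexecuted.

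Second, and this is the genuine gap, your verification of \eqref{eq:d2} by perturbing the identity does not reach the required targets. The element to be realised is $\sum_i b_i\beta_i+\sum_j b_{k+j}\nu_j$ with $(\beta_1,\dots,\beta_k)$ ranging over a Zariski-dense subset of $\C^k$: its Grassmann-degree-zero component is an essentially arbitrary self-dual element $\sum_i \beta_i b_i$. Writing $A=A^{(0)}+(\text{terms of positive Grassmann degree})$ with $A^{(0)}\in\End_\C(V_\C)_{\bar0}$, the degree-zero component of $\omega(A)$ is $(A^{(0)})^\dag A^{(0)}$, so before any recursion one must know that the classical map $A^{(0)}\mapsto (A^{(0)})^\dag A^{(0)}$ hits a Zariski-dense set of self-dual elements of $\End_\C(V_\C)_{\bar0}$ --- i.e.\ that a (generic) complex symmetric matrix has the form $a^\dag a$ on the orthogonal block, and likewise $d^\dag d$ on the symplectic block. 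Your ansatz $A=\id_V+B$ with $B$ ``small'', together with the invertibility of $B\mapsto B+B^\dag$ on self-dual elements, only controls the higher-degree (nilpotent) corrections; it cannot produce arbitrary degree-zero components, so the claim that \eqref{eq:d2} holds with $U^0=\C^k$ is unsupported as written. This nonperturbative degree-zero input (together with invertibility of $A^{(0)}$ on a dense set of parameters, so that the linearisation $B\mapsto (A^{(0)})^\dag B+B^\dag A^{(0)}$ is onto and your order-by-order correction can run) is precisely what the argument of \cite[Lemma 6.7]{LZ6}, cited in the paper's proof, supplies; with it your recursion can be made to work, but without it the proposal is incomplete.
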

\begin{proof}
The argument in the proof of \cite[Lemma 6.7]{LZ6} shows that
the image of $\omega$ satisfies the conditions \eqref{eq:d1} and \eqref{eq:d2}.
Hence the result is immediate from Lemma \ref{lem:dense}.
\end{proof}

\begin{proposition}\label{prop:symspan}
Let $M_\C$ be a $\zt$-graded complex vector space with $\sdim(M_\C)=(k|\ell)$, and let $M=M_\C\ot_\C\La$ be the corresponding superspace
over the Grassmann algebra. Let $B\subseteq M_{\bar0}$ be a subset satisfying the conditions

(i) $\La B=M$; that is, $B$ generates $M$ as $\La$-module.  

(ii) For any two distinct elements $b,c\in B$ there are infinitely many complex numbers $\la$ such that $b+\la c\in B$.

Then for each integer $r\geq 0$, the space $\left(S^r(M) \right)_{\bar0}$ is the $\La_{\bar0}$-span 
of the elements $\{b\ot b\ot\dots\ot b\mid b\in B\}$.
\end{proposition}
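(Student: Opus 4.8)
The plan is to show that the $\La_{\bar0}$-span $S$ of $\{b\ot b\ot\dots\ot b \mid b\in B\}$ exhausts $\left(S^r(M)\right)_{\bar0}$ by a polarisation argument combined with the density of $B$. First I would fix a homogeneous basis $c_1,\dots,c_{k+\ell}$ of $M_\C$, with $c_1,\dots,c_k$ even and the rest odd, and note that $\left(S^r(M)\right)_{\bar0}$ has a $\La_{\bar0}$-basis of the monomials $\ol{c_{i_1}\ot\dots\ot c_{i_r}}$ of the appropriate total parity; equivalently, by the identification $S^r(M^*)^*\cong S^r(M)$ implicit in the construction of $\CP^r[M_{\bar0}]$ from \cite[\S3.2]{LZ6} (and recalled in the excerpt), it suffices to show that a linear functional $\Theta$ on $\left(S^r(M)\right)_{\bar0}$ which annihilates every $b\ot b\ot\dots\ot b$ ($b\in B$) is identically zero. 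Such a $\Theta$ corresponds to an element of $\left(S^r(M^*)\right)_{\bar0}$, hence to a polynomial function $f=F^r(\Theta)\in\CP^r[M_{\bar0}]$ via the injective map $F^r$ of the excerpt, and by construction $f(b)=\langle\Theta,b\ot\dots\ot b\rangle=0$ for all $b\in B$.

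The crux is then to promote ``$f$ vanishes on $B$'' to ``$f$ vanishes on all of $M_{\bar0}$'', using only hypotheses (i) and (ii). This is exactly the kind of statement Lemma \ref{lem:dense} is designed for, so I would either invoke it directly after checking that $B$ (or a suitable cone over it) satisfies conditions \eqref{eq:d1} and \eqref{eq:d2}, or reprove the needed special case inline. Condition (i), $\La B=M$, supplies a subset $\{b_1,\dots,b_{k+\ell}\}\subseteq B$ that is a $\La$-basis of $M$ refining the $\zt$-grading after the usual reshuffling, which gives us the $\C^k$-worth of even directions needed in \eqref{eq:d2}; condition (ii), the ``infinitely many $\la$ with $b+\la c\in B$'' property, is precisely what makes the restriction of a polynomial to $B$-lines vanish identically, and iterating it along a chain of basis directions yields Zariski-density of the relevant slice $U^0\subseteq\C^k$. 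One must also produce the odd perturbations $\nu_1,\dots,\nu_\ell\in\La(\mathord{>}N)\cap\La_{\bar1}$ with the stated independence of products; here I would simply take $\nu_j=e(N+j)$, whose products are manifestly linearly independent in $\La$, and use (i) together with the scaling built into (ii) to absorb these into elements of $B$ — this is the one place where a small amount of care with the Grassmann-algebra bookkeeping is required.

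Once density of (the cone on) $B$ in $M_{\bar0}$ is in hand, $f(B)=0$ forces $f=0$, hence $\Theta=0$ by injectivity of $F^r$, and therefore $S=\left(S^r(M)\right)_{\bar0}$, which is the assertion.

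The main obstacle I anticipate is the verification that hypotheses (i) and (ii) really do imply conditions \eqref{eq:d1}--\eqref{eq:d2} — in particular, reconciling the fact that (ii) only guarantees lines through \emph{pairs} of elements of $B$ with the need, in \eqref{eq:d2}, for a genuinely $k$-dimensional Zariski-dense family of even coefficients together with a fixed generic odd perturbation. The honest way to handle this is to argue by induction on the number of ``active'' even directions: starting from a single $b\in B$, repeatedly apply (ii) to move along basis directions $b_i$, at each stage using that a polynomial in one variable with infinitely many roots vanishes, so that the locus of parameters for which $f$ vanishes is all of $\C^k$; the odd variables are dealt with afterwards exactly as in the proof of Lemma \ref{lem:dense}, by extracting coefficients $\la_{m_1\dots m_kM_{k+1}\dots M_{k+\ell}}$ one multidegree at a time. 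Everything else — the identification $S^r(M^*)^*\cong S^r(M)$, the properties of $F^r$, the passage between functionals and polynomials — is quoted verbatim from the material already set up in the excerpt and in \cite{LZ6}.
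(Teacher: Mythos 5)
Your strategy has a genuine gap at its very first step, the dualization. You want to conclude that the $\La_{\bar0}$-span $\Sigma(r)$ of the elements $b\ot\dots\ot b$ equals $\left(S^r(M)\right)_{\bar0}$ by showing that every functional vanishing on these elements is zero. But the relevant functionals must be $\La$-linear (otherwise they do not correspond to elements of $\left(S^r(M^*)\right)_{\bar0}$ and hence not to polynomials via $F^r$), and over $\La$ or $\La_{\bar0}$ the implication ``the annihilator of a submodule in the dual is zero $\Rightarrow$ the submodule is everything'' is simply false: a proper submodule need not be separated by any nonzero $\La$-linear functional. For instance, the ideal of positive-degree elements of $\La_{\bar0}$, viewed as a submodule of $\La_{\bar0}$, is annihilated by no nonzero $\La_{\bar0}$-linear functional, since any element of $\La$ involves only finitely many generators $e(i)$ and so cannot kill all positive-degree even elements. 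Thus your argument, even if completed, would prove only a weak ``closure'' statement about polynomial functions vanishing on $\Sigma(r)$, not the exact module-theoretic equality asserted in the proposition. A secondary problem is the passage from hypotheses (i)--(ii) to the density conditions \eqref{eq:d1}--\eqref{eq:d2}: condition (ii) gives you points $b+\la c\in B$ on lines through pairs of elements of $B$, but provides neither the scaling $\beta b\in B$ required by \eqref{eq:d1} nor any control over the odd coordinates needed for \eqref{eq:d2} (your suggestion to ``absorb'' $\nu_j=e(N+j)$ into elements of $B$ using ``the scaling built into (ii)'' has no basis, since (ii) contains no such scaling). Density of $B$ is genuinely stronger than (i)--(ii) and is not what the proposition requires.

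The paper's proof avoids duality and density altogether and is an exact polarisation argument: by \cite[Lemma 3.7]{LZ6} one has $S^r(M)=\alpha^+(r)T^r(M)$ with $\alpha^+(r)=\sum_{\sigma\in\Sym_r}\sigma$, and one shows by induction on $r$ that $\alpha^+(r)(m_1\ot\dots\ot m_r)\in\Sigma(r)$ for every even tensor. If some factor lies in $M_{\bar0}$, the coset decomposition of $\Sym_r$ over $\Sym_{r-1}$ together with the inductive hypothesis reduces everything to the symmetrised tensors $s_{i,j}(b,c)$ in two elements $b,c\in B$, and these are recovered from the elements $(b+\la c)^{\ot r}\in\Sigma(r)$ for $r+1$ distinct values of $\la$ supplied by (ii), by inverting a Vandermonde matrix -- this is exactly where (ii) is used, and it is used to produce identities in the module, not to make functions vanish. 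If all factors are odd, one writes $m_i=\la_i b_i$ with $\la_i\in\La_{\bar1}$, $b_i\in B$ (using (i)), pulls the odd scalars out of $\alpha^+(r)$ so that the sign $\ve(\sigma)$ appears twice and cancels, and reduces to the previous case; here $r$ is even so the product $\la_1\cdots\la_r$ lies in $\La_{\bar0}$. I would encourage you to rework your argument along these lines; the density machinery of Lemma \ref{lem:dense} is the right tool for Proposition \ref{prop:omegadense}, but not for this statement.
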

\begin{proof} 
This will be by induction on $r$, the case $r=1$ being trivial, since by the condition (i),
$\La_{\bar0}B=M_{\bar0}$, so that $M_{\bar0}$ is the $\La_{\bar0}$-span of $B$ (and 
$M_{\bar1}$ is the $\La_{\bar1}$-span of $B$).

Now it follows from \cite[Lemma 3.7]{LZ6} that $S^r(M)=\alpha^+(r)T^r(M)$, where $\alpha^+(r)=\sum_{\sigma\in\Sym_{r}}\sigma$, with 
$\Sym_r$ acting on $T^r(M)$ in the usual way through $\varpi_r$ (see \eqref{eq:sym}). Denote by $\Sigma(r)$ the $\La_{\bar0}$-span 
of $\{b\ot b\ot\dots\ot b\mid b\in B\}$. Then by the last remark, it suffices to show that for any element $m_1\ot m_2\ot\dots\ot m_r\in T^r(M)_{\bar0}$,
we have 
\be\label{proj}
\alpha^+(r)(m_1\ot m_2\ot\dots\ot m_r)\in \Sigma(r).
\ee

We shall prove \eqref{proj} by induction on $r$. Assume \eqref{proj} for a smaller number of factors; we consider separately 
two cases.

\noindent{\em Case 1: at least one of the factors $m_i$ lies in $M_{\bar0}$.} In this case, since for any element $\sigma\in\Sym_r$
we have $\alpha^+(r)\sigma=\alpha^+(r)$, we may clearly assume that $m_r\in M_{\bar0}$. Then 
$m_1\ot m_2\ot\dots\ot m_{r-1}\in T^{r-1}(M)_{\bar0}$, and by induction, we have 
$\alpha^+(r-1)(m_1\ot m_2\ot\dots\ot m_{r-1})\in\Sigma(r-1)$.

But from the coset decomposition of $\Sym_r$ with respect to $\Sym_{r-1}$, we have 
$$
\alpha^+(r)=\left(1+\sum_{i=1}^{r-1}(i,r)  \right)\alpha^+(r-1),
$$ 
where $(i,j)$ denotes the transposition of $i$ and $j$ in $\Sym_r$ and $\Sym_{r-1}$ is the subgroup of $\Sym_r$ which
permutes $\{1,\dots,r-1\}$.
The last two observations imply that $\alpha^+(r)(m_1\ot m_2\ot\dots\ot m_r)$ is a $\La_{\bar0}$-linear combination of elements of the 
form 
\be\label{form}
\left(1+\sum_{i=1}^{r-1}(i,r)  \right)(b\ot b\ot\dots\ot b\ot c),
\ee
where $b,c\in B$.

Now denote by $s_{i,j}(b,c)$ the sum of all tensors of the form $\dots\ot b\ot\dots\ot c\ot \dots\in T^r(M)$, where all factors are either $b$ or $c$,
and there are $i$ factors equal to $b$, and $j$ factors equal to $c$. Then the element in \eqref{form} is $s_{1,r-1}(b,c)$, and it will suffice
to show that if $b,c\in B$, then $s_{i,j}(b,c)\in\Sigma(r)$ for all $i,j$ such that $i+j=r$. To see this last point, suppose $\la\in\C$ is such that
$b+\la c\in B$. Then $\Sigma(r)\ni(b+\la c)\ot(b+\la c)\ot\dots\ot(b+\la c)=\sum_{j=0}^r\la^js_{i,j}(b,c)$. By taking $r+1$ distinct values $\la$
for which this relation holds, we obtain, by the invertibility of the van der Monde matrix, an equation for each element $s_{i,j}(b,c)$
as a $\C$-linear combination of the elements $(b+\la c)\ot(b+\la c)\ot\dots\ot(b+\la c)\in\Sigma(r)$. Thus $s_{r-1,1}(b,c)\in\Sigma(r)$,
and the proof in Case 1 is complete.

\noindent{\em Case 2: each factor $m_i\in M_{\bar1}$.} In this case, we must have $r$ even. Since $\alpha^+(r)$ is linear in each 
variable $m_i$ and since $M_{\bar1}=\La_{\bar1}B$, we may assume that for $i=1,2,\dots,r$, we have $m_i=\la_ib_i$, 
where $\la_i\in\La_{\bar1}$ and $b_i\in B$. Then
$$
\begin{aligned}
\alpha^+(r)(\la_1b_1\ot\dots\la_r b_r)=&\sum_{\sigma\in\Sym_r}\sigma(\la_1b_1\ot\dots\la_r b_r)\\
=&\sum_{\sigma\in\Sym_r}\ve(\sigma)\la_{\sigma (1)}\la_{\sigma(2)}\dots\la_{\sigma(r)} b_{\sigma(1)}\ot\dots \ot b_{\sigma(r)}\\
=&\sum_{\sigma\in\Sym_r}\ve(\sigma)^2\la_{1}\la_{2}\dots\la_{r} b_{\sigma(1)}\ot\dots\ot b_{\sigma(r)}\\
=&\la_{1}\la_{2}\dots\la_{r} \alpha^+(r) (b_1\ot\dots\ot b_r),\\
\end{aligned}
$$
where $\ve$ is the alternating character of $\Sym_r$. But since $r$ is even, $\la_1\dots\la_r\in\La_{\bar0}$;
finally,  observe that by Case 1, $ \alpha^+(r) (b_1\ot\dots\ot b_r) \in\Sigma(r)$, and the proof is complete.
\end{proof}

\begin{corollary}\label{cor:symspan}
Let $R\subseteq (\End_\La(V))_{\bar0}^+$ be the set of elements $A\in (\End_\La(V))_{\bar0}^+$ such that
$A=\omega(X)=X^\dag X$ for some $X\in (\End_\La(V))_{\bar0}$. Then $S^d((\End_\La(V))^+)_{\bar0}$
is generated as $\La_{\bar0}$-module by $\{A\ot A\ot\dots\ot A\mid A\in R\}$.
\end{corollary}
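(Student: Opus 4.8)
The plan is to derive \corref{cor:symspan} directly from \propref{prop:symspan}, applied with $M=\End_\La(V)^+$ — so that $M_\C=\End_\C(V_\C)^+$ and $M_{\bar0}=(\End_\La(V))_{\bar0}^+$ — and with $B=R$. Everything then reduces to checking the two hypotheses (i) and (ii) of \propref{prop:symspan} for the set $R$ of values $\omega(X)=X^\dag X$, $X\in(\End_\La(V))_{\bar0}$, where $\omega$ is the map of Definition~\ref{def:omega}.

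I would verify hypothesis (ii) first, as it is the more elementary. Given two distinct elements $A=\omega(X)$, $A'=\omega(Y)$ of $R$, I claim $A+\la A'\in R$ for \emph{every} $\la\in\C$. Since $\C$ is algebraically closed, write $\la=\mu^2$; then $\la A'=\mu^2Y^\dag Y=(\mu Y)^\dag(\mu Y)\in R$, so it suffices to show that $R$ is closed under addition. For this one observes that $X^\dag X+Y^\dag Y$ is again an even, $^\dag$-symmetric endomorphism, and the associated even symmetric bilinear form on $V$ can be put in a standard shape over the local ring $\La_{\bar0}$ (square roots of units are available there), whence $X^\dag X+Y^\dag Y=Z^\dag Z$ for a suitable $Z\in(\End_\La(V))_{\bar0}$. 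This is the substantive content of (ii); extracting ``infinitely many $\la$'' — indeed all $\la\in\C$ — is then immediate.

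Hypothesis (i), that $R$ generates $M=\End_\La(V)^+$ as a $\La$-module, is where the weight of the argument lies, and here I would lean on \propref{prop:omegadense}, which asserts exactly that the image of $\omega$ is dense in $(\End_\La(V))_{\bar0}^+$. The concrete mechanism is the polarisation identity $\omega(1+tX)-\omega(1-tX)=2t(X+X^\dag)$: since $1\pm tX$ is even whenever $X$ is, both terms on the left lie in $R$, and as $X$ runs over $(\End_\La(V))_{\bar0}$ and $t$ over $\C^\times$ the right-hand side runs over all of $(\End_\La(V))_{\bar0}^+$ (take $X=Z/2$). Hence already the $\C$-span of $R$ is the whole of $M_{\bar0}$, which supplies the generation input needed in the proof of \propref{prop:symspan}. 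Granting (i) and (ii), \propref{prop:symspan} with $r=d$ yields that $(S^d(\End_\La(V)^+))_{\bar0}$ is the $\La_{\bar0}$-span of $\{A\ot A\ot\dots\ot A\mid A\in R\}$, which is the assertion. The step I expect to be the main obstacle is precisely this passage from the \emph{density} of the image of $\omega$ furnished by \propref{prop:omegadense} to the $\La$-module generation demanded by \propref{prop:symspan}; the polarisation identity is the tool that bridges it, and the normal-form assertion for even symmetric forms over $\La_{\bar0}$ (used in (ii)) is the one other point that needs care.
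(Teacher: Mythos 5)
Your overall route is the same as the paper's: reduce \corref{cor:symspan} to \propref{prop:symspan} with $B=R$ and verify hypotheses (i) and (ii) for $R$. Your treatment of (i) is fine: the polarisation identity $\omega(1+tX)-\omega(1-tX)=2t(X+X^\dag)$ does show that the $\C$-span, hence the $\La_{\bar0}$-span, of $R$ contains all of $(\End_\La(V))_{\bar0}^+$, which is exactly the spanning input that the proof of \propref{prop:symspan} uses; the paper obtains this, together with (ii), by inspecting the proof of \cite[Lemma 6.7]{LZ6}.

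The gap is in your verification of (ii). You assert that $A+\la A'\in R$ for \emph{every} $\la\in\C$, reduce this to closure of $R$ under addition, and justify the latter by putting the (possibly degenerate) even symmetric form attached to $X^\dag X+Y^\dag Y$ into a ``standard shape'' over $\La_{\bar0}$. That step is not valid: over $\La_{\bar0}$, whose maximal ideal consists of nilpotents, one can only split off vectors on which the form takes \emph{unit} values, so forms with values in the maximal ideal admit no such normal form, and the existence of square roots of units does not help. Indeed the claim itself fails within this framework: for $\sdim V_\C=(1|0)$ one has $(\End_\La(V))_{\bar0}^+\cong\La_{\bar0}$ with $\omega(x)=x^2$, and $\omega(1)+\omega\bigl(i(1+e(1)e(2))\bigr)=-2e(1)e(2)$ is not a square in $\La_{\bar0}$ (a square either has nonzero constant term or lies in degree $\geq 4$), so $R$ is not closed under addition there. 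More structurally, your claims for (ii) combined with your own polarisation identity would make $R$ all of $(\End_\La(V))_{\bar0}^+$, i.e.\ would force $\omega$ to be surjective, whereas the paper establishes and uses only density of its image (\propref{prop:omegadense}). What \propref{prop:symspan}(ii) actually requires is the weaker, generic statement that $A+\la A'\in R$ for infinitely many $\la\in\C$; that nontrivial point is precisely what the paper imports from the proof of \cite[Lemma 6.7]{LZ6} (for generic $\la$ one reduces to an element whose constant term is invertible, and for such unimodular even symmetric elements a Gram--Schmidt argument over the local ring $\La_{\bar0}$ does yield a factorisation $Z^\dag Z$). As written, your argument for (ii) does not go through and needs to be replaced by such a generic-$\la$ argument.
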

\begin{proof}
By Proposition \ref{prop:symspan} it suffices to prove that the set $R$ satisfies the conditions (i) and (ii) 
on the set $B$ in the statement. A close examination of the proof of \cite[Lemma 6.7]{LZ6} verifies these conditions.
\end{proof}

\subsection{Orthosymplectic invariants} Let $L\in \left(T^{2d}(V)^*\right)^G$, where $G=\OSp(V)$.
In \cite[\S 4.2]{LZ6} we showed that there is a unique element $F_L\in \CP^d[S^2(V^*)_{\bar0}]\ot T^{2d}(V)^*$
such that
\be\label{eq:fl}
F_L(\omega(A)\ot v_1\ot\dots\ot v_{2d})=L(Av_1\ot\dots\ot Av_{2d})
\ee
for any $A\in\End(V)_{\bar0}$, where $\omega(A)=A^\dag A\in \End_\La(V)_{\bar0}^+
\cong S^2(V^*)_{\bar0}$.
Note that generically (i.e. for invertible $X\in S^2(V^*)$), $X=\omega(A)=\omega(B)$ implies that
$B=gA$ for some $g\in\OSp(V)(=\{g\in\GL(V)\mid g^\dag=g\inv\})$. Hence by $\OSp(V)$-invariance,
$L(Av_1\ot\dots\ot Av_{2d})=L(Bv_1\ot\dots\ot Bv_{2d})$. Further, for $g\in\GL(V)$,
$\omega(Ag\inv)=\hat g\omega(A)g\inv$. It follows that $F_L\in \left(\CP^d[S^2(V^*)_{\bar0}]\ot T^{2d}(V)^*\right)^{\GL(V)}$.

\begin{remark}\label{rem:fl}
Note that since we have canonical $\GL(V)$-equivariant isomorphisms
$$
\CP^d[S^2(V^*)_{\bar0}]\cong S^d(S^2(V^*)^*)\cong  \left(S^d(S^2(V^*))\right)^*,
$$
$F_L$ may be thought of as an element of $\left(S^d(S^2(V^*))\ot T^{2d}(V)\right)^*$, and hence as a $\GL(V)$-invariant linear map
\be\label{eq:fllin}
F_L:S^d(S^2(V^*))\ot T^{2d}(V)\lr \La.
\ee
\end{remark}


Now there are canonical $\GL(V)$-equivariant surjections
\be\label{eq:sur}
T^d(\End_\La(V))\ot T^{2d}(V)\overset{\sim}{\lr}T^{2d}(V^*)\ot T^{2d}(V)\overset{{/C\ot\id}}{\lr}S^d(S^2(V^*))\ot T^{2d}(V),
\ee
where $C$ is the centraliser in $\Sym_{2d}$ of the element $(12)(34)\dots(2d-1,2d)$, acting on $T^{2d}(V^*)$ as usual
(i.e. through $\varpi_{2d}$). It follows that $F_L$ may be pulled back to $T^d(\End_\La(V))\ot T^{2d}(V)$ to give an element
$H_L\in \left( (T^d(\End_\La(V))\ot T^{2d}(V))^*    \right)^{\GL(V)}$.

The result of these observations is the following statement. In the statement below, we use the notation of \cite[\S 3.4]{LZ6},
so that $\End_\La(V)^{\pm}=\{A\in\End_\La(V)\mid A^\dag=\pm A\}$. Recall that under the isomorphism
$\zeta^*:V^*\ot V^*\simeq\End_\La(V)$,  $\End_\La(V)^{+}$ and $ \End_\La(V)^{-}$ correspond respectively to
$S^2(V^*)$ and $\wedge^2(V^*)$.

\begin{proposition}\label{prop:ltohl}
Given $L\in \left(T^{2d}(V)^*\right)^{G}$, where $G=\OSp(V)$, there is a uniquely defined element
$H_L\in \left( (T^d(\End_\La(V))\ot T^{2d}(V))^*    \right)^{\GL(V)}$ with the following properties.
\begin{enumerate}
\item The map $h:L\mapsto H_L$ is $\La$-linear  and injective
$$
h:(T^{2d}(V)^*)^G\lr \left( (T^d(\End_\La(V))\ot T^{2d}(V))^*\right)^{\GL(V)}.
$$
\item We have $H_L(\id_V\ot\dots\ot \id_V\ot\ul v)=L(\ul v)$ for $\ul v\in T^{2d}(V)$.
\item If $A\in (\End_\La(V))_{\bar0}^+$ and $X\in(\End_\La(V))_{\bar0}$ is such that
$A=\omega(X)=X^\dag X$, then for any element $\ul v\in T^{2d}(V)$, we have
$H_L(A\ot\dots\ot A\ot\ul v)=F_L(A\ot\ul v)=L(X.\ul v)=L(Xv_1\ot\dots\ot Xv_{2d})$.
\item If $A_i\in (\End_\La(V))_{\bar0}^-$ for some $i$, then for any element $\ul v\in T^{2d}(V)$
and elements $A_i\in\End_\La(V)$,
$H_L(A_1\ot\dots\ot A_d\ot\ul v)=0$.
\end{enumerate}
\end{proposition}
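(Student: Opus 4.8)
The plan is to construct $H_L$ directly as the pullback of $F_L$ along the surjections in \eqref{eq:sur}, and then verify the four properties, obtaining injectivity from property (3) together with the density results of the previous subsection. First I would observe that $F_L\in\left(\CP^d[S^2(V^*)_{\bar0}]\ot T^{2d}(V)^*\right)^{\GL(V)}$ was already produced in \cite[\S4.2]{LZ6} and, via Remark \ref{rem:fl}, may be regarded as a $\GL(V)$-invariant linear map $F_L:S^d(S^2(V^*))\ot T^{2d}(V)\lr\La$. Composing with the canonical $\GL(V)$-equivariant surjection \eqref{eq:sur},
$$
T^d(\End_\La(V))\ot T^{2d}(V)\lr S^d(S^2(V^*))\ot T^{2d}(V),
$$
yields an element $H_L\in\left((T^d(\End_\La(V))\ot T^{2d}(V))^*\right)^{\GL(V)}$. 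Since every map in the composition is $\GL(V)$-equivariant and $F_L$ is invariant, $H_L$ is invariant; and $L\mapsto F_L$ is $\La$-linear (being characterised by \eqref{eq:fl}, which is linear in $L$), so $h:L\mapsto H_L$ is $\La$-linear. Uniqueness of $H_L$ is not absolute — it is uniqueness as the pullback of $F_L$ — so I would phrase it as: $H_L$ is the unique element factoring through \eqref{eq:sur} to give $F_L$, which is what the subsequent sections will use.

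Next, properties (2) and (4) should be essentially formal. For (4): the middle arrow of \eqref{eq:sur} quotients $T^{2d}(V^*)$ by the action of $C$ on the odd-indexed $\ot$ pairs, which is exactly the symmetrisation forcing each $\End$-factor into $S^2(V^*)\cong\End_\La(V)^+$; so any input with some $A_i\in\End_\La(V)^-$ lies in the kernel of the composite surjection, whence $H_L$ vanishes on it. More precisely, the factor $A_i$ contributes an element of $\wedge^2(V^*)$ under $\zeta^*$, which is killed by the relevant $\alpha^+$-type symmetriser; I would spell this out using the compatibility $\zeta^*\circ\tau={}^\dag\circ\zeta^*$ recalled just before Definition \ref{def:omega}. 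For (2): setting all $A_i=\id_V$, the image in $S^d(S^2(V^*))\ot T^{2d}(V)$ corresponds to $\omega(\id_V)^{\ot d}\ot\ul v=\id_V^{\ot d}\ot\ul v$ in the $\End^+$-picture, and $F_L(\id_V\ot\ul v)=L(\id_V.\ul v)=L(\ul v)$ by \eqref{eq:fl} with $A=\id_V$.

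Property (3) is the substantive one and is really a restatement of the defining relation \eqref{eq:fl} combined with the identification of $F_L$ as a function on $S^2(V^*)_{\bar0}\cong\End_\La(V)^+_{\bar0}$. If $A=\omega(X)=X^\dag X$ with $X\in\End_\La(V)_{\bar0}$, then in $S^d(S^2(V^*))$ the element $A^{\ot d}$ is the image of $A^{\ot d}\in T^d(\End_\La(V)^+)$, and by construction of $F_L$ via \eqref{eq:fl} we get $H_L(A^{\ot d}\ot\ul v)=F_L(A\ot\ul v)=L(Xv_1\ot\dots\ot Xv_{2d})$; I should also note that the right-hand side is independent of the choice of $X$ with $\omega(X)=A$, as recorded in the discussion after \eqref{eq:fl} (generically $X$ is determined up to left multiplication by $G$, and $L$ is $G$-invariant).

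Finally, for injectivity of $h$: suppose $H_L=0$. By (3), $L(Xv_1\ot\dots\ot Xv_{2d})=0$ for every $X\in\End_\La(V)_{\bar0}$ of the form giving $A=\omega(X)$ — that is, $F_L$ vanishes on all $\omega(X)$. By Proposition \ref{prop:omegadense}, the image of $\omega$ is dense in $\End_\La(V)^+_{\bar0}\cong S^2(V^*)_{\bar0}$, and $F_L$ is a polynomial function on this space; hence $F_L=0$ identically, so in particular $L(\ul v)=F_L(\id_V\ot\ul v)=0$ for all $\ul v$, giving $L=0$. (Equivalently one may invoke Corollary \ref{cor:symspan}: the elements $A^{\ot d}$ with $A\in R$ span $S^d(\End_\La(V)^+)_{\bar0}$, so $H_L$ is determined by its values there, and these are the $L(X.\ul v)$, forcing $L=0$.) The main obstacle I anticipate is bookkeeping the signs and the $\zt$-grading through the identifications in \eqref{eq:ci} and \eqref{eq:sur} — in particular making precise which parity-homogeneous pieces the surjection \eqref{eq:sur} lands in, so that "$A^{\ot d}$" really is a legitimate element of $S^d(S^2(V^*))_{\bar0}$ when $A\in\End_\La(V)^+_{\bar0}$ — but Lemma \ref{lem:endinv} and Corollary \ref{cor:dpi2} provide exactly the explicit formulae needed to keep track of this.
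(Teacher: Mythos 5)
Your construction is exactly the paper's: the paper states this proposition as a summary of the preceding observations (the definition of $F_L$ via \eqref{eq:fl}, Remark \ref{rem:fl}, and the pullback along \eqref{eq:sur}), and your verifications of (2)--(4) and your reading of ``uniqueness'' as uniqueness of the pullback of $F_L$ match its intent, with Proposition \ref{prop:omegadense} and Corollary \ref{cor:symspan} being precisely the tools the paper keeps on hand for this purpose. The only remark worth making is that injectivity already follows from property (2) alone (if $H_L=0$ then $L(\ul v)=H_L(\id_V^{\ot d}\ot\ul v)=0$), so your density argument, while correct, is more than is needed.
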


This leads to the following form for the Second fundamental Theorem for $\OSp(V)$.

\begin{theorem}\label{thm:main}
All linear relations among the functions $\kappa_D$ ($D\in\cD$) (see Definition \ref{def:kd}) are
obtained as follows. We have $\sum_{D\in B_{2d}^0}\kappa_D\lambda_D=0$, where $\lambda_D\in\La$ for all $D$,
if and only if we have $\sum_{D\in \cD} h(\kappa_D)\lambda_D=0$ in
$\left((T^{2d}(V^*)\ot T^{2d}(V))^*\right)^{\GL(V)}$.
\end{theorem}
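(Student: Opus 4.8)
The plan is to reduce the problem entirely to the injectivity of the map $h$ established in Proposition~\ref{prop:ltohl}, together with the FFT for $\OSp(V)$ (Corollary~\ref{cor:fft-osp2}). The statement asserts that the linear relations among the $\kappa_D$ match precisely the relations among their images $h(\kappa_D)$ in $\left((T^{2d}(V^*)\ot T^{2d}(V))^*\right)^{\GL(V)}$; since $h$ is $\La$-linear, this is immediate once $h$ is known to be injective.

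First I would spell out the ``only if'' direction: if $\sum_{D}\kappa_D\lambda_D=0$ in $(T^{2d}(V^*))^G$, then applying the $\La$-linear map $h$ gives $\sum_D h(\kappa_D)\lambda_D = h\!\left(\sum_D \kappa_D\lambda_D\right) = h(0) = 0$. This uses only that $h$ is a well-defined $\La$-linear map, which is part~(1) of Proposition~\ref{prop:ltohl}. Conversely, suppose $\sum_{D\in\cD} h(\kappa_D)\lambda_D = 0$. Since $h$ is $\La$-linear, this reads $h\!\left(\sum_D \kappa_D\lambda_D\right) = 0$; and $\sum_D \kappa_D\lambda_D$ lies in $(T^{2d}(V^*))^G$ by Corollary~\ref{cor:fft-osp2}. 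Now invoke the injectivity of $h$, again from part~(1) of Proposition~\ref{prop:ltohl}, to conclude $\sum_D \kappa_D\lambda_D = 0$. This completes both directions. One should also remark that the index set in the hypothesis ($D\in B_{2d}^0$, which is the same as $\cD$) is exactly the set indexing a spanning set of $(T^{2d}(V^*))^G$, so the statement is genuinely a complete description of all relations, not merely of relations among a subfamily.

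The only real content here is the injectivity of $h$, so the main obstacle is already discharged by Proposition~\ref{prop:ltohl}; within the present proof, the one point requiring a little care is simply to track that the $\kappa_D$ are indexed by Brauer diagrams $D:2d\to 0$ and that $h$ is defined on the span of these (i.e.\ on all of $(T^{2d}(V^*))^G$), so that ``$\sum_D\kappa_D\lambda_D$'' is a legitimate element of the domain of $h$ whenever the $\lambda_D\in\La$. Given that, the proof is a two-line formal argument: apply $h$, use linearity, use injectivity.

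\begin{proof}
Both assertions follow at once from Proposition~\ref{prop:ltohl}(1), which provides the $\La$-linear injection
$$
h:(T^{2d}(V)^*)^G\lr \left( (T^d(\End_\La(V))\ot T^{2d}(V))^*\right)^{\GL(V)},
$$
together with Corollary~\ref{cor:fft-osp2}, which identifies $(T^{2d}(V^*))^G$ with $\sum_{D\in\cD}\La\kappa_D$. Indeed, for any choice of $\lambda_D\in\La$ ($D\in\cD$), the element $\sum_{D\in\cD}\kappa_D\lambda_D$ lies in $(T^{2d}(V^*))^G$, so $h$ may be applied to it, and by $\La$-linearity of $h$,
$$
h\Bigl(\sum_{D\in\cD}\kappa_D\lambda_D\Bigr)=\sum_{D\in\cD}h(\kappa_D)\lambda_D .
$$
If $\sum_{D\in\cD}\kappa_D\lambda_D=0$, then the right-hand side is $h(0)=0$. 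Conversely, if $\sum_{D\in\cD}h(\kappa_D)\lambda_D=0$, then $h\bigl(\sum_{D\in\cD}\kappa_D\lambda_D\bigr)=0$, and since $h$ is injective, $\sum_{D\in\cD}\kappa_D\lambda_D=0$. Finally, recall from Corollary~\ref{cor:fft-osp2} that the identification of $B_{2d}^0$ with $\cD$ is a bijection, so the sums over $D\in B_{2d}^0$ and over $D\in\cD$ coincide. This proves the theorem.
\end{proof}
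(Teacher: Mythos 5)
Your proof is correct and is essentially the paper's own argument: the paper presents Theorem \ref{thm:main} as an immediate consequence of Proposition \ref{prop:ltohl}(1), exactly as you do, with the forward direction given by $\La$-linearity of $h$ and the converse by its injectivity (the identification of the codomain $\left((T^d(\End_\La(V))\ot T^{2d}(V))^*\right)^{\GL(V)}$ with $\left((T^{2d}(V^*)\ot T^{2d}(V))^*\right)^{\GL(V)}$ being the canonical isomorphism of \eqref{eq:ci}). Your closing remark that Corollary \ref{cor:fft-osp2} guarantees the $\kappa_D$ span $(T^{2d}(V^*))^G$, so that these are indeed \emph{all} the relations, is the right point to make explicit.
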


We remark that all relations in $\left((T^{2d}(V^*)\ot T^{2d}(V))^*\right)^{\GL(V)}$ are known from Theorem \ref{thm:sft-gl2}.
Thus in principle, all relations among the $\kappa_D$ in $W^G$ are known. To make effective use of this we shall
prove a more explicit diagrammatic version of Theorem \ref{thm:main} in the next section. This will suffice to describe explicitly
the kernel of the surjective map $\kappa:B_{2d}^0\lr T^{2d}(V^*)^G$ defined by $D\mapsto \kappa_D$.

\section{A diagrammatic form of the second fundamental theorem for $\OSp(V)$}
In this section we continue with the notation of the previous one. Thus $\sdim V_\C=(m|2n)$ and $G=\OSp(V)$.
We shall work in the Brauer category $\CB(m-2n)$, and our standard notation is that for non-negative integers
$s,t$, $B_s^t$ denotes $\Hom_{\CB(m-2n)}(s,t)$, which is the free $\La$-module with basis the set of Brauer diagrams
from $s$ to $t$. 

\subsection{The diagrammatic setup} Consider the following diagram:
\be\label{cd1}
\xymatrix{
 &B_{2d}^0\ar[d]_{\chi?} \ar@{->}[rr]^{\kappa\phantom{XXXX}} &&\left(T^{2d}(V^*)\right)^G\ar[d]^h\\
B_{2d}^{2d} \ar[r]&\Lambda\Sym_{2d}\ar@{->}[rr]_{\delta\phantom{XXXXXX}} && \left(T^{2d}(V^*)\ot T^{2d}(V)\right)^{\GL(V)}		
}
\ee
%
where $\kappa$ and $\delta$ are defined respectively by $\kappa(D)=\kappa_D$ and $\delta(\pi)=\delta_\pi$,
for a Brauer diagram $D$ from $2d$ to $0$ and permutation $\pi\in\La\Sym_{2d}$; $h$ is the map defined in
Proposition \ref{prop:ltohl}, and the map ${B_{2d}^{2d}}\to\La\Sym_{2d}$ is the natural map with kernel the 
ideal spanned by diagrams with horizontal arcs. We shall define a $\La$-linear map $\chi$ as shown, which makes the diagram
commute.

Let $D$ be a diagram from $2d$ to $0$. We shall define $\chi(D)\in \La\Sym_{2d}$.
For the definition we shall require the $C$-symmetriser $e(C)\in\La\Sym_{2d}$, defined by
\be\label{eq:alpha}
e(C):=|C|\inv\alpha^+(C)=|C|\inv\sum_{\sigma\in C}\sigma.
\ee
Now $D=D_0\pi_D$ where  $D_0$ is the diagram defined in Corollary \ref{cor:kd}
and $\pi_D\in\Sym_{2d}$ is unique up to left multiplication by an element $\sigma\in C$.
It follows that the element
\be\label{eq:chid}
\chi(D):=e(C)\pi_D
\ee
is well defined, since for any element $\sigma\in C$, $e(C)\sigma=e(C)$, so that $\chi(D)$ is independent
of the choice of $\pi_D$.

\setlength{\unitlength}{0.30mm}
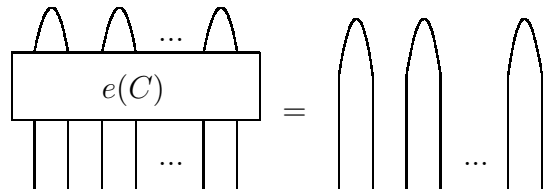
\begin{figure}[h]
\begin{center}
\begin{picture}(280, 100)(0,0)
\qbezier(30, 60)(38, 100)(45, 60)
\qbezier(60, 60)(68, 100)(75, 60)

\put(85, 65){$...$}

\qbezier(105, 60)(113, 100)(120, 60)

\put(20, 60){\line(1, 0){110}}
\put(20, 60){\line(0, -1){30}}
\put(130, 60){\line(0, -1){30}}
\put(20, 30){\line(1, 0){110}}

\put(60, 40){$e(C)$}

\put(30, 30){\line(0, -1){30}}
\put(45, 30){\line(0, -1){30}}
\put(60, 30){\line(0, -1){30}}
\put(75, 30){\line(0, -1){30}}

\put(85, 10){$...$}

\put(105, 30){\line(0, -1){30}}
\put(120, 30){\line(0, -1){30}}

\put(140, 30){$=$}

\qbezier(165, 50)(173, 100)(180, 50)
\qbezier(195, 50)(203, 100)(210, 50)
\qbezier(240, 50)(247, 100)(255, 50)

\put(165, 50){\line(0, -1){50}}
\put(180, 50){\line(0, -1){50}}
\put(195, 50){\line(0, -1){50}}
\put(210, 50){\line(0, -1){50}}

\put(220, 10){$...$}

\put(240, 50){\line(0, -1){50}}
\put(255, 50){\line(0, -1){50}}

\end{picture}
\end{center}
\caption{Composition $D_0e(C)=D_0$}
\label{Fig4}
\end{figure}

\setlength{\unitlength}{0.25mm}
\begin{figure}[h]
\begin{center}
\begin{picture}(200, 120)(10,60)
\qbezier(30, 120)(40, 200)(50, 120)
\qbezier(70, 120)(80, 200)(90, 120)
\qbezier(110, 120)(120, 200)(130, 120)
\put(134, 70){$...$}
\qbezier(160, 120)(170, 200)(180, 120)

\put(18, 120){- - - - - - - - - - - - - - - - }

\put(30, 120){\line(0, -1){60}}

\qbezier(50, 120)(50, 90)(130, 60)

\qbezier(70, 120)(65, 90)(50, 60)

\qbezier(90, 120)(90, 90)(70, 60)

\qbezier(110, 120)(105, 90)(90, 60)

\qbezier(130, 120)(135, 90)(160, 60)

\qbezier(160, 120)(150, 90)(110, 60)

\put(180, 120){\line(0, -1){60}}
\end{picture}
\end{center}
\caption{ Diagram $D$ in Fig. \ref{Fig5} as composition $D_0\pi_D$}
\label{Fig6}
\end{figure}
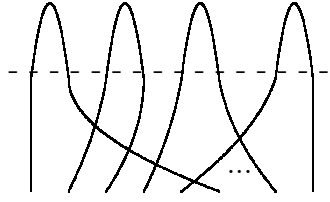

\begin{proposition}\label{prop:chi}
With $\chi:B_{2d}^0\to \La\Sym_{2d}$ defined as in \eqref{eq:chid} above, the diagram \eqref{cd1} commutes.
\end{proposition}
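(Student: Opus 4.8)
The plan is to verify that the square in \eqref{cd1} commutes by a direct computation on a basis, reducing everything to the identities already established about $\kappa$, $\delta$ and $h$. Fix a Brauer diagram $D:2d\to 0$ and write $D=D_0\pi_D$ with $\pi_D\in\Sym_{2d}$, so that $\chi(D)=e(C)\pi_D=|C|\inv\sum_{\sigma\in C}\sigma\pi_D$. I must show $\delta(\chi(D))=h(\kappa(D))=h(\kappa_D)$. Since $\delta$ is $\La$-linear, the left-hand side is $|C|\inv\sum_{\sigma\in C}\delta_{\sigma\pi_D}$, so the claim becomes
\[
h(\kappa_D)=|C|\inv\sum_{\sigma\in C}\delta_{\sigma\pi_D}.
\]
The first observation is that every term on the right is actually equal, i.e. $\delta_{\sigma\pi_D}$ is independent of $\sigma\in C$; once this is shown the sum collapses and it remains to prove $h(\kappa_D)=\delta_{\pi_D}$.

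For the first step, I would use the characterisation of $\delta_\pi$ from Corollary \ref{cor:dpi2}, which realises $\delta_\pi$ as a function on $T^d(\End_\La(V))\ot T^{2d}(V)$, and compare it with the realisation of $h(\kappa_D)=H_{\kappa_D}$ supplied by Proposition \ref{prop:ltohl}(3)--(4). The key point is that $H_{\kappa_D}$ is \emph{characterised} by its values on elements $A\ot\dots\ot A\ot\ul v$ with $A=\omega(X)=X^\dag X$: by Corollary \ref{cor:symspan} such tensors $A\ot\dots\ot A$ span $S^d(\End_\La(V)^+)_{\bar0}$ over $\La_{\bar0}$, and by Proposition \ref{prop:ltohl}(4) $H_{\kappa_D}$ kills the $\wedge^2$-part, so $H_{\kappa_D}$ is determined by those values together with $\La$-linearity and $\GL(V)$-equivariance; hence two invariant functionals agreeing there agree everywhere. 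On such a tensor, Proposition \ref{prop:ltohl}(3) gives $H_{\kappa_D}(A\ot\dots\ot A\ot\ul v)=\kappa_D(X.\ul v)=\langle\varpi_{2d}(\pi_D)(X.\ul v)\rangle_0$, which by Definition \ref{def:kappa} unwinds to a product of forms $(Xv_{\pi_D\inv(2i-1)},Xv_{\pi_D\inv(2i)})$ up to the sign $(-1)^{n(\pi_D\inv,\ul v)}$; using $(Xv,Xw)=(v,X^\dag Xw)=(v,Aw)$ this becomes a product of the form $(v_{\pi_D\inv(2i-1)},Av_{\pi_D\inv(2i)})$, which is exactly the value of $\delta_{\pi_D}(A\ot\dots\ot A\ot\ul v)$ predicted by Corollary \ref{cor:dpi2}, provided the signs match. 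Carrying this out for $\sigma\pi_D$ in place of $\pi_D$ and invoking Lemma \ref{lem:ctriv} (which says $\langle\varpi_{2d}(\sigma)\ul w\rangle_0=\langle\ul w\rangle_0$ for $\sigma\in C$) shows simultaneously that $\delta_{\sigma\pi_D}$ agrees with $\delta_{\pi_D}$ on all these spanning tensors, hence everywhere, so the sum over $C$ collapses, and that the common value equals $h(\kappa_D)$. By $\La$-linearity of all three maps $\kappa$, $\delta$, $h$, commutativity on the basis of $B_{2d}^0$ gives commutativity of the square.

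The main obstacle I anticipate is the bookkeeping of the Koszul signs: matching $n(\pi\inv,\ul v)$ and the sign $J(\ul A,\ul w_{\pi\inv})+n(\pi\inv,\ul w)$ appearing in Corollary \ref{cor:dpi2} against the signs implicit in $\mu$ (Lemma \ref{lem:endinv}) and in the identification $\omega(A)\leftrightarrow S^2(V^*)$, when $A$ is the \emph{same} operator $X^\dag X$ in every slot. One has to be careful that the substitution $(Xv,Xw)=(v,Aw)$ is performed slotwise in a way compatible with the chosen ordering conventions for $\zeta^*$ and $\dag$, and that the parity of $X$ (which is $\bar0$) makes the relevant crossing signs trivial; a clean way to avoid sign-chasing is to check the equality first on the purely even locus (all $v_i\in V_{\bar0}$, $A\in\End_\La(V)_{\bar0}^+$ with $\La^0$-entries) where all signs vanish, and then note that both sides are $\GL(V)$-invariant $\La$-multilinear functionals determined by density (Proposition \ref{prop:omegadense}) and by Proposition \ref{prop:symspan}, so agreement on that locus forces agreement in general. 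The remaining ingredients — that $D\mapsto \pi_D$ is well-defined up to left multiplication by $C$ and that $e(C)\sigma=e(C)$ — are exactly what makes $\chi$ well-defined and were already recorded just before the statement.
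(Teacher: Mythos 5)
Your reduction to the diagonal tensors $A\ot\dots\ot A\ot\ul v$ with $A=\omega(X)$ (via Corollary \ref{cor:symspan}) and the computation $(Xv,Xw)=(v,Aw)$ identifying $H_{\kappa_D}$ with $\delta_{\pi_D}$ on those tensors is essentially the paper's Lemma \ref{lem:fl}, and that part is fine. The final inference, however, is wrong: you claim that $\delta_{\sigma\pi_D}$ and $\delta_{\pi_D}$ ``agree on all these spanning tensors, hence everywhere,'' so that the average $|C|\inv\sum_{\sigma\in C}\delta_{\sigma\pi_D}$ collapses to the single term $\delta_{\pi_D}$. The tensors $A\ot\dots\ot A$ span only $S^d(\End_\La(V)^+)_{\bar0}$, a \emph{proper} subspace of $T^d(\End_\La(V))_{\bar0}$, and a functional on the whole space is not determined by its restriction to a proper subspace; $\GL(V)$-invariance does not rescue this, since the invariant functionals form the span of all the $\delta_\pi$ and restriction to the symmetric subspace is far from injective on that span. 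In fact $\delta_{\sigma\pi}=\delta_\pi$ would force $\varpi_{2d}(\sigma)=\id$, which fails for $\sigma\neq 1$; equivalently, by Theorem \ref{thm:sft-gl2} the $\delta_\pi$ are linearly independent whenever $2d\leq 2mn+m+2n$. A quick consistency check exposes the problem: your collapsed conclusion $h(\kappa_D)=\delta_{\pi_D}$ would have to hold for \emph{every} representative $\pi_D$ of the coset $C\pi_D$, forcing all those $\delta$'s to coincide --- precisely what is false --- and it would make the symmetriser $e(C)$ in the definition of $\chi$ superfluous, whereas it is essential to the well-definedness of $\chi$ and to Theorem \ref{thm:Main}.

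What is true, and what the paper's proof uses, is that $H_{\kappa_D}$ and $\delta_{\pi_D}$ agree \emph{after restriction} to $S^d(S^2(V^*))\ot T^{2d}(V)$. Since $H_{\kappa_D}$ is by construction the pullback of $F_{\kappa_D}$ through the projection $p=\varpi_{2d}(e(C))$ of Lemma \ref{lem:proj}, one gets $h(\kappa_D)(\ul\phi\ot\ul v)=\delta_{\pi_D}\bigl([\varpi_{2d}(e(C))\ul\phi]\ot\ul v\bigr)$, and the adjunction \eqref{eq:slide} then moves $e(C)$ onto the second factor, yielding exactly $|C|\inv\sum_{\sigma\in C}\delta_{\sigma\pi_D}=\delta(e(C)\pi_D)=\delta(\chi(D))$ --- an honest average of distinct functionals, not a single one. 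So the fix is to retain the symmetriser and transport it across the pairing rather than discard it; with that change the rest of your argument goes through.
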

The proof depends on some results to be proved below.

\subsection{Proof of Proposition \ref{prop:chi}}
This will depend on the next two Lemmas.
The first simply identifies the projection $p:T^{2d}(V^*)\lr S^d(S^2(V^*))$.
\begin{lemma}\label{lem:proj}
The canonical projection  $p:T^{2d}(V^*)\lr S^d(S^2(V^*))$ is realised by the action of the idempotent
$\varpi_{2d}(e(C))$, where $e(C)=|C|\inv\sum_{\sigma\in C}\sigma$ is as
defined in \eqref{eq:alpha}.
\end{lemma}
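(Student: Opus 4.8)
The plan is to unwind the definitions of all the maps involved and verify the identity directly on a spanning set. First I would recall that $S^d(S^2(V^*))$ is, by construction, the image of $T^{2d}(V^*)$ under the symmetrisation operators: $S^2(V^*)$ is the $+1$-eigenspace of $\tau$ acting on $V^*\ot V^*$, so $S^2(V^*)^{\ot d}$ is cut out inside $T^{2d}(V^*)$ by the product of the $d$ commuting transpositions $(12),(34),\dots,(2d-1,2d)$ (acting via $\varpi_{2d}$), and then $S^d$ of this further symmetrises the $d$ tensor slots under the $\Sym_d$ which permutes the $d$ pairs. The composite projection onto $S^d(S^2(V^*))$ is therefore realised by averaging over exactly the group $C=\Sym_d\ltimes(\zt)^d$, the centraliser of $(12)(34)\dots(2d-1,2d)$ in $\Sym_{2d}$ (Definition \ref{def:c}); this is precisely the content of the assertion that $p=\varpi_{2d}(e(C))$ with $e(C)=|C|\inv\sum_{\sigma\in C}\sigma$.

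To turn this into a clean argument I would proceed in two steps. Step one: show $\varpi_{2d}(e(C))$ is idempotent, which is immediate since $e(C)$ is idempotent in $\La\Sym_{2d}$ (it is $|C|\inv$ times the sum over a subgroup) and $\varpi_{2d}$ is an algebra homomorphism. Step two: identify its image and kernel. By \cite[Lemma 3.7]{LZ6} (invoked earlier in the proof of Proposition \ref{prop:symspan}) we have $S^r(M)=\alpha^+(r)T^r(M)$ for the full symmetrising idempotent; applying this with the roles played by $\End_\La(V)$-type factors, or more directly by a two-stage application — first to each adjacent pair to produce $S^2(V^*)$ in each of the $d$ slots, then to the $d$ slots of $(S^2(V^*))^{\ot d}$ to produce $S^d$ — one sees that the image of $\varpi_{2d}(e(C))$ is exactly $S^d(S^2(V^*))$ regarded as a submodule of $T^{2d}(V^*)$, and that $\varpi_{2d}(e(C))$ acts as the identity there. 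Since an idempotent is determined by its image together with acting as the identity on that image (its kernel being the complementary summand), $\varpi_{2d}(e(C))$ must coincide with the canonical projection $p$ onto $S^d(S^2(V^*))$ along the complement.

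Alternatively, and perhaps more transparently, I would argue on generators: $S^d(S^2(V^*))$ is spanned by elements $\ol{\phi_1\psi_1}\,\ol{\phi_2\psi_2}\cdots\ol{\phi_d\psi_d}$, and the quotient map $T^{2d}(V^*)\to S^d(S^2(V^*))$ sends $\phi_1\ot\psi_1\ot\cdots\ot\phi_d\ot\psi_d$ to that element. One checks that $\varpi_{2d}(e(C))$ applied to $\phi_1\ot\psi_1\ot\cdots\ot\phi_d\ot\psi_d$ lands in the $C$-invariants of $T^{2d}(V^*)$, that on $C$-invariants the quotient map is a bijection onto $S^d(S^2(V^*))$ (because $T^{2d}(V^*)/C \cong S^d(S^2(V^*))$ by the identification recalled above, analogous to the identification $S^r(M)\cong T^r(M)/\Sym_r$ used throughout), and that $\varpi_{2d}(e(C))$ restricted to these invariants is the identity. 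Hence $\varpi_{2d}(e(C))$ and the canonical projection $p$ agree on a spanning set of $T^{2d}(V^*)$, so they are equal.

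I expect the main obstacle to be purely bookkeeping: keeping straight the two-stage symmetrisation (pairs first, then the $d$ slots) and verifying — with the correct signs coming from $\varpi_{2d}$ as in Lemma \ref{lem:vpaction} — that averaging over the subgroup $C$ realises exactly the composite, rather than a twisted version of it. Since $C$ consists of honest (sign-free) averaging in $\La\Sym_{2d}$ and the graded signs $(-1)^{n(\sigma\inv,\ul v)}$ introduced by $\varpi_{2d}$ are precisely those needed to make $\varpi_{2d}$ a homomorphism compatible with the identification $S^r(M)\cong T^r(M)/\Sym_r$, these signs organise themselves correctly; but this is the point where one must be careful rather than casual.
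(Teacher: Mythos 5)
Your proposal is correct and follows exactly the route the paper has in mind: the paper's entire proof is the one-line remark that the statement is ``clear from the definition of the super symmetric powers,'' and your two-stage symmetrisation (averaging over the pair transpositions $(2i-1,2i)$ to land in $S^2(V^*)^{\ot d}$, then over the block permutations $\Sym_d$, which together constitute averaging over $C$) is just that definition spelled out, including the sign compatibility via $\varpi_{2d}$. Nothing in your argument deviates from or goes beyond what the paper takes as definitional, so no further comment is needed.
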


This is clear from the definition of the super symmetric powers.

The next Lemma gives the values of $F_{\kappa_D}$ for $D\in B_{2d}^0$. Recall (see Remark \ref{rem:fl})
that $F_L$ is to be thought of as a linear map from $S^d(S^2(V^*))\ot T^{2d}(V)$ to $\La$, and $H_L$
is its pullback to $T^{2d}(V^*)\ot T^{2d}(V)$ (cf. the last Lemma).
\begin{lemma}\label{lem:fl}
Let $L=\kappa_D$ (see Definition \ref{def:kd}). For $\ul\psi\in S^d(S^2(V^*))\subset T^{2d}(V^*)$ and $\ul v\in T^{2d}(V)$,
we have $F_{\kappa_D}(\ul\psi\ot\ul v)=\delta_{\pi_D}(\ul\psi\ot\ul v)$, where $\pi_D$ is any permutation
in $D$, thought of as a coset $C\pi_D$ of $C$ in $\Sym_{2d}$ and $F_{\kappa_D}$ is defined in \eqref{eq:fl}.
\end{lemma}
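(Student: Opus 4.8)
The plan is to verify the identity $F_{\kappa_D}(\ul\psi\ot\ul v)=\delta_{\pi_D}(\ul\psi\ot\ul v)$ by reducing to the defining relation \eqref{eq:fl} for $F_L$ and then tracing through the canonical identifications established in Lemmas \ref{lem:vpaction}, \ref{lem:endinv} and Corollary \ref{cor:dpi2}. First I would observe that both sides are $\La$-linear in $\ul\psi$, and that by Corollary \ref{cor:symspan} the $\La_{\bar0}$-span of the elements $A\ot A\ot\dots\ot A$ with $A=\omega(X)=X^\dag X$ for $X\in(\End_\La(V))_{\bar0}$ exhausts $S^d((\End_\La(V))^+)_{\bar0}\cong S^d(S^2(V^*))_{\bar0}$; together with the analogous statement for the odd part (obtained by scaling), it therefore suffices to check the claimed equality on arguments of the form $\ul\psi=A\ot A\ot\dots\ot A$ (under the identification $S^2(V^*)\cong\End_\La(V)^+$ via $\zeta^*$), with $A=\omega(X)$.

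On such elements, the left-hand side is handled by the defining property \eqref{eq:fl} of $F_L=F_{\kappa_D}$: we get
$$
F_{\kappa_D}(\omega(X)\ot v_1\ot\dots\ot v_{2d})=\kappa_D(Xv_1\ot\dots\ot Xv_{2d}).
$$
Now I would unwind the right-hand side of this using Definition \ref{def:kd} and \eqref{eq:kappa}: choosing a representative $\pi_D$ with $D=D_0\pi_D$, we have $\kappa_D=\kappa_{\pi_D}$, so
$$
\kappa_D(X\ul v)=\langle\varpi_{2d}(\pi_D)(X v_1\ot\dots\ot Xv_{2d})\rangle_0,
$$
which, by the formula \eqref{eq:kappa} for $\kappa_{\pi_D}$ and the $\GL(V)$-equivariance built into all the constructions, equals (up to the sign $(-1)^{n(\pi_D\inv,\ul v)}$) a product of forms $(Xv_{\pi_D\inv(2i-1)},Xv_{\pi_D\inv(2i)}) = (v_{\pi_D\inv(2i-1)},X^\dag X v_{\pi_D\inv(2i)})=(v_{\pi_D\inv(2i-1)},\omega(X) v_{\pi_D\inv(2i)})$, using the definition of the adjoint $^\dag$. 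Comparing this with the explicit formula for $\delta_{\pi_D}$ on $T^d(\End(V))\ot T^{2d}(V)$ given in Corollary \ref{cor:dpi2} — evaluated at $A_1=\dots=A_d=\omega(X)$ — and matching the sign $J(\ul A,\ul w_{\pi\inv})+n(\pi\inv,\ul w)$ against the sign arising above, yields exactly $\delta_{\pi_D}(\omega(X)\ot\dots\ot\omega(X)\ot\ul v)$.

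The routine but genuine obstacle is the sign bookkeeping: one must check that the parity factor $(-1)^{n(\pi_D\inv,\ul v)}$ produced by $\varpi_{2d}(\pi_D)$ (Lemma \ref{lem:vpaction}), combined with the Koszul signs incurred in sliding the operators $X$ past each other when rewriting $(Xv_i,Xv_j)=(v_i,\omega(X)v_j)$, reproduces precisely the factor $(-1)^{J(\ul A,\ul w_{\pi\inv})+n(\pi\inv,\ul w)}$ appearing in Corollary \ref{cor:dpi2}. This is where Lemma \ref{lem:endinv} does the real work: it records the commutativity of $\langle-,-\rangle$ with the map \eqref{eq:ci} together with the sign $J(\ul A,\ul v)=\sum_i[A_i](\sum_{j=1}^{2i-1}[v_j])$, so the verification amounts to applying that lemma to the tensor $A\ot\dots\ot A\ot\varpi_{2d}(\pi_D)(\ul v)$ and noting that $\mu$ evaluated there is exactly $F_{\kappa_D}(\ul\psi\ot\ul v)$ by \eqref{eq:fl}. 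Finally, since the identity has been verified on a $\La_{\bar0}$-spanning set of $S^d(S^2(V^*))$ and both sides extend $\La$-linearly, the lemma follows for all $\ul\psi\in S^d(S^2(V^*))$.
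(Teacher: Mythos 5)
Your proposal is correct and follows essentially the same route as the paper's proof: reduce via Corollary \ref{cor:symspan} (plus $\La$-linearity) to $\ul\psi=A\ot\dots\ot A$ with $A=\omega(X)=X^\dag X$, apply the defining property \eqref{eq:fl} to get $\kappa_D(Xv_1\ot\dots\ot Xv_{2d})$, rewrite $(Xv_i,Xv_j)=(v_i,Av_j)$ using the adjoint, and invoke the commutative diagram of Lemma \ref{lem:endinv} to identify the result with $\delta_{\pi_D}(\ul\psi\ot\ul v)$. Your routing of the sign check through Corollary \ref{cor:dpi2} is just a slightly more explicit version of the same appeal to Lemma \ref{lem:endinv}, so there is nothing substantively different to flag.
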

\begin{proof} Note that by Lemma \ref{lem:proj}, $S^d(S^2(V^*))=\varpi_{2d}(e(C))T^{2d}(V^*)$.
Since the statement is linear in $\ul\psi$, it follows from Corollary \ref{cor:symspan} above, 
that it suffices to prove the Lemma
for  $\ul{\psi}=A\ot A\ot\dots\ot A\mid A\in (\End_\La(V))_{\bar0}^+ \}\in S^d(S^2(V^*))$,
where $A$ has the form $A=X^\dag X$ for some $X\in (\End_\La(V))_{\bar0}$.
We therefore consider this case.

Then
$$
\begin{aligned}
F_{\kappa_D}(A\ot A\ot\dots\ot A\ot\ul v)&=\kappa_D(Xv_1\ot Xv_2\ot\dots\ot Xv_{2d})\\
&=\langle\varpi_{2d}(\pi_D)(Xv_1\ot\dots\ot Xv_{2d})\rangle\\
&=\mu(A\ot A\ot\dots\ot A\ot\varpi_{2d}(\pi_D)(\ul v)),\\
\end{aligned}
$$
where $\mu$ is the map in Lemma \ref{lem:endinv}, since $(Xv_i,Xv_j)=(v_i,X^\dag Xv_j)=(v_i,Av_j)=(Av_i,v_j)$.

From the commutativity of the diagram in Lemma \ref{lem:endinv}, it follows that
$F_{\kappa_D}(A\ot A\ot\dots\ot A\ot\ul v)=\langle  A\ot A\ot\dots \ot A, \varpi_{2d}(\pi_D)(\ul v)\rangle
=\delta_{\pi_D}( A\ot A\ot\dots \ot A\ot\ul v)$, and replacing $ A\ot A\ot\dots \ot A$ by a general element
$\ul\psi\in S^d(S^2(V^*))$, the result follows.
\end{proof}

\begin{proof}[Proof of Proposition \ref{prop:chi}]
It remains to prove that for each element $\ul\phi\ot\ul v\in T^{2d}(V^*)\ot T^{2d}(V)$,
we have
\be\label{eq:pf}
h(\kappa_D)(\ul\phi\ot\ul v)=\delta(\chi(D))(\ul\phi\ot\ul v).
\ee
By Lemma \ref{lem:proj}, the left side of \eqref{eq:pf} is equal to
$F_{\kappa_D}(p(\ul\phi)\ot\ul v)=F_{\kappa_D}([\varpi_{2d}(e(C))(\ul\phi)]\ot\ul v)$,
which by Lemma \ref{lem:fl} is equal to
$\delta_{\pi_D}([\varpi_{2d}(e(C))(\ul\phi)]\ot\ul v)$.
But
$$
\begin{aligned}
\delta_{\pi_D}([\varpi_{2d}(e(C))(\ul\phi)]\ot\ul v)=&\langle [\varpi_{2d}(e(C))(\ul\phi)],\varpi_{2d}(\pi_D)(\ul v )\rangle\\
=&\langle\ul\phi,\varpi_{2d}(e(C))\varpi_{2d}(\pi_D)(\ul v) \rangle\\
=&\langle\ul\phi,\varpi_{2d}(|C|\inv\sum_{\sigma\in C}\sigma\pi_D)(\ul v) \rangle\\
=&|C|\inv\sum_{\sigma\in C}\delta_{\sigma\pi_D}(\ul\phi\ot\ul v)\\
=&\delta(e(C)\pi_D)(\ul\phi\ot\ul v)\\
=&\delta(\chi(D))(\ul\phi\ot\ul v),
\end{aligned}
$$
and the proof is complete.
\end{proof}

\subsection{The second fundamental theorem of invariant theory} The second fundamental theorem describes the
kernel of the map $\kappa$ in the diagram \eqref{cd1}.
The result, which is the main result of this work, is as follows.
\begin{theorem}\label{thm:Main}
We have
$$
\Ker(\kappa)=D_0 I(m,n),
$$
where $I(m,n)$ is the ideal of $\La\Sym_{2d}$ which is the sum of the two-sided ideals corresponding to partitions which
contain an $(m+1)\times(2n+1)$ rectangle, and $D_0$ is the diagram defined in Corollary \ref{cor:kd}.
\end{theorem}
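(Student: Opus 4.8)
The plan is to chase the commutative diagram \eqref{cd1}. Since that diagram commutes (Proposition~\ref{prop:chi}) we have $h\circ\kappa=\delta\circ\chi$, and since $h$ is injective (Proposition~\ref{prop:ltohl}(1)) this yields at once
\[
\Ker(\kappa)=\chi\inv\left(\Ker(\delta)\right).
\]
The next step is to identify $\Ker(\delta)$. The map $\delta$ sends a permutation $\pi$ to the invariant $\delta_\pi$, and by Theorem~\ref{thm:sft-gl2} (the second formulation of the second fundamental theorem for $\GL(V)$, applied here with $\ell=2n$) the $\La$-linear relations among the $\delta_\pi$ are exactly those coming from the two-sided ideal $I(m,n)=\bigoplus_\mu I(\mu)\subseteq\La\Sym_{2d}$, the sum being over partitions $\mu$ of $2d$ which contain an $(m+1)\times(2n+1)$ rectangle; in other words $\Ker(\delta)=I(m,n)$. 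Thus the theorem reduces to the purely formal identity $\chi\inv(I(m,n))=D_0 I(m,n)$ inside $B_{2d}^0$.

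To prove this identity I would use two elementary features of $\chi$. First, $\chi$ is $\La$-linear and, by its definition \eqref{eq:chid} (taking $\pi_D=\pi$ as the coset representative when $D=D_0\pi$), it satisfies $\chi(D_0 x)=e(C)x$ for every $x\in\La\Sym_{2d}$, where $e(C)=|C|\inv\sum_{\sigma\in C}\sigma$. Since $I(m,n)$ is a (two-sided, hence left) ideal and $e(C)\in\C\Sym_{2d}$, we have $e(C)x\in I(m,n)$ whenever $x\in I(m,n)$, whence $D_0 I(m,n)\subseteq\chi\inv(I(m,n))$. Second, the relation $D_0 e(C)=D_0$ in the Brauer category (Figure~\ref{Fig4}) gives $D_0\chi(D_0\pi)=D_0 e(C)\pi=D_0\pi$, and hence, extending $\La$-linearly, $D_0\chi(\xi)=\xi$ for every $\xi\in B_{2d}^0$. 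Consequently, if $\chi(\xi)\in I(m,n)$ then $\xi=D_0\chi(\xi)\in D_0 I(m,n)$, which is the reverse inclusion $\chi\inv(I(m,n))\subseteq D_0 I(m,n)$. Combining the two inclusions yields $\Ker(\kappa)=\chi\inv(I(m,n))=D_0 I(m,n)$, as required.

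All of the substantive work is upstream and may be taken as given: the injectivity of $h$ (which encapsulates the supergeometric reduction to $\GL(V)$ through Propositions~\ref{prop:omegadense}, \ref{prop:symspan} and \ref{prop:ltohl}) and the identification $\Ker(\delta)=I(m,n)$ (the transcription, via the diagram \eqref{eq:diag1} and Lemmas~\ref{lem:vpaction} and \ref{lem:epie}, of the classical second fundamental theorem for $\GL$, Theorem~\ref{thm:BR2}). Granting these, \thmref{thm:Main} follows from the short diagram chase above; the only points demanding care are that $\chi$ is well defined on the cosets $C\pi_D$ and that $D_0 e(C)=D_0$, both of which are already settled in the construction of $\chi$ and in Proposition~\ref{prop:chi}. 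Hence I do not anticipate a genuine obstacle at this final stage: the difficulty of the second fundamental theorem for $\OSp(V)$ has been front-loaded into the construction of the injection $h$ and into the $\GL(V)$ case.
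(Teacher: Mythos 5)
Your proof is correct and follows essentially the same route as the paper: reduce to $\Ker(\delta)=I(m,n)$ via injectivity of $h$ and commutativity of \eqref{cd1}, then verify $\chi\inv(I(m,n))=D_0I(m,n)$ using $\chi(D_0x)=e(C)x$ and $D_0e(C)=D_0$. Your explicit formulation of the identity $D_0\chi(\xi)=\xi$ for all $\xi\in B_{2d}^0$ is a slightly cleaner packaging of the paper's converse step, but the argument is the same.
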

\begin{proof}
Since $h$ is injective, it follows that $\Ker(\kappa)=\Ker (\delta\circ\chi)$. Hence $b\in B_{2d}^0$ lies in
$\Ker (\kappa)$ if and only if $\chi(b)\in\Ker(\delta)$, and by Theorem \ref{thm:sft-gl2}, $\Ker(\delta)=I(m,n)$.
But we claim that $\chi\inv(I(m,n))=D_0I(m,n)$, for if $x\in I(m,n)$ then $\chi(D_0x)=e(C)x\in I(m,n)$, while conversely
if $D_0x\in \chi\inv(I(m,n))$, then $\chi(D_0x)=e(C)x\in I(m,n)$. But then $D_0x=D_0e(C)x\in D_0I(m,n)$,
proving the assertion. The Theorem follows.
\end{proof}

\setlength{\unitlength}{0.3mm}
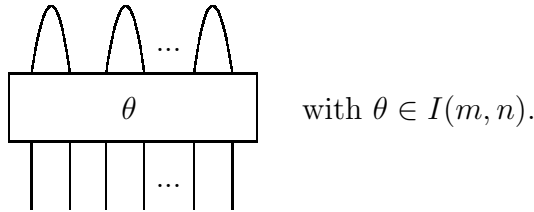
\begin{figure}[h]
\begin{center}
\begin{picture}(200, 100)(0,0)
\qbezier(30, 60)(39, 120)(47, 60)
\qbezier(63, 60)(72, 120)(80, 60)
\put(85, 70){$...$}
\qbezier(102, 60)(110, 120)(119, 60)

\put(20, 60){\line(1, 0){110}}
\put(20, 60){\line(0, -1){30}}
\put(130, 60){\line(0, -1){30}}
\put(20, 30){\line(1, 0){110}}

\put(70, 40){$\theta$}

\put(30, 30){\line(0, -1){30}}
\put(47, 30){\line(0, -1){30}}
\put(63, 30){\line(0, -1){30}}
\put(80, 30){\line(0, -1){30}}
\put(85, 10){$...$}
\put(102, 30){\line(0, -1){30}}
\put(119, 30){\line(0, -1){30}}

\put(150, 40){with $\theta\in I(m, n)$.}

\end{picture}
\end{center}
\caption{Elements of $\Ker(\kappa)$}
\label{Fig7}
\end{figure}

The following statement concerns the range of values of $d$ for which
\begin{corollary}\label{cor:lowest} With notation as in Theorem \ref{thm:Main}, the kernel of $\kappa$ is zero if
$2d<(m+1)(2n+1)$. Further $\Ker(\kappa)\neq 0$ for $d\geq (m+1)(2n+1)$.
\end{corollary}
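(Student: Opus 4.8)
The plan is to read off both assertions from \thmref{thm:Main}, which gives $\Ker(\kappa)=D_0\,I(m,n)$ with $I(m,n)=\bigoplus_\mu I(\mu)$, the sum over partitions $\mu\vdash 2d$ whose Young diagram contains an $(m+1)\times(2n+1)$ rectangle. The vanishing assertion is then immediate: such a $\mu$ has at least $(m+1)(2n+1)$ cells, so when $2d<(m+1)(2n+1)$ no such $\mu$ exists, hence $I(m,n)=0$ and $\Ker(\kappa)=0$.

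For the non-vanishing assertion the subtlety is that $I(m,n)\neq 0$ need not force $D_0\,I(m,n)\neq 0$, so the first task is to decide when left multiplication by $D_0$ annihilates a minimal ideal $I(\mu)$. Recall (Figure~\ref{Fig4}, already used in the proof of \thmref{thm:Main}) that $D_0e(C)=D_0$, so $D_0x=D_0e(C)x$ for every $x\in\La\Sym_{2d}$; and by \corref{cor:kd} the rule $C\pi\mapsto D_0\pi$ is a bijection from $C\backslash\Sym_{2d}$ onto the set of Brauer diagrams $2d\to 0$, which is a $\La$-basis of $B_{2d}^0$. Since $\{\,e(C)\pi\mid C\pi\in C\backslash\Sym_{2d}\,\}$ is a $\La$-basis of $e(C)\La\Sym_{2d}$, the map $x\mapsto D_0x$ restricts to a $\La$-module isomorphism $e(C)\La\Sym_{2d}\xrightarrow{\ \sim\ }B_{2d}^0$. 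Hence $D_0\,I(m,n)=D_0\,e(C)I(m,n)$ vanishes if and only if $e(C)I(m,n)=0$.

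Now $e(C)I(m,n)=\bigoplus_\mu e(C)I(\mu)$, and since $I(\mu)$ is, as a left $\C\Sym_{2d}$-module, a direct sum of copies of the Specht module $S^\mu$, one has $e(C)I(\mu)\neq 0$ exactly when $e(C)S^\mu=(S^\mu)^C\neq 0$, i.e.\ when $S^\mu$ occurs in $\Ind_C^{\Sym_{2d}}\mathbf 1$. Here $C\cong\Sym_d\ltimes(\zt)^d$ is the hyperoctahedral group, and the classical decomposition $\Ind_C^{\Sym_{2d}}\mathbf 1=\bigoplus_{\lambda\vdash d}S^{2\lambda}$ (equivalently, the decomposition of $B_{2d}^0$ as a $\Sym_{2d}$-module; the sum is over partitions all of whose rows have even length, each occurring once) shows $(S^\mu)^C\neq 0$ precisely when $\mu=2\lambda$ for some $\lambda\vdash d$. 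Thus $\Ker(\kappa)\neq 0$ if and only if some $2\lambda$ with $\lambda\vdash d$ contains the $(m+1)\times(2n+1)$ rectangle, i.e.\ iff there is $\lambda\vdash d$ with $\lambda_1,\dots,\lambda_{m+1}\ge n+1$.

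Finally I would produce such a $\lambda$ whenever $d\ge(m+1)(2n+1)$. Since then $2d\ge(m+1)(4n+2)$, add the (even, possibly zero) surplus $2d-(m+1)(4n+2)$ to the first row of the rectangle $((4n+2)^{m+1})$; this yields $2\lambda$ with $\lambda=(d-m(2n+1),(2n+1)^m)$, a genuine partition of $d$ because $d\ge(m+1)(2n+1)$, for which $2\lambda\vdash 2d$ has all rows even and first $m+1$ rows of length $\ge 4n+2>2n+1$, hence contains the rectangle. Then $I(2\lambda)\subseteq I(m,n)$ with $e(C)I(2\lambda)\neq 0$, so $\Ker(\kappa)=D_0\,I(m,n)\neq 0$. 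The only substantial input is the reduction of the previous paragraph — the injectivity of $D_0$ on $e(C)\La\Sym_{2d}$ together with the decomposition of $\Ind_C^{\Sym_{2d}}\mathbf 1$; the remainder is bookkeeping with Young symmetrisers and Brauer diagrams. (One could sharpen the threshold by rounding each of the first $m+1$ rows of the rectangle up to $2n+2$ instead, which shows $\Ker(\kappa)\neq 0$ already for $d\ge(m+1)(n+1)$, matching the vanishing range; the statement above uses the cleaner, non-sharp bound.)
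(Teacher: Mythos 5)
Your proof is correct, and while you handle the vanishing assertion exactly as the paper does (it is immediate from Theorem \ref{thm:Main}), your argument for the non-vanishing assertion takes a genuinely different route. The paper stays inside the Brauer category: it uses the bending isomorphisms $\bU$ to recover a nonzero element of $I(m,n)$ from a capped-off diagram, thereby exhibiting a nonzero element of $D_0I(m,n)$ once $2d\ge 2(m+1)(2n+1)$ strands are available, which is where the (non-optimal) bound $d\ge(m+1)(2n+1)$ comes from. You instead make the algebraic reduction explicit: since $D_0e(C)=D_0$ and $C\pi\mapsto D_0\pi$ is a bijection from $C\backslash\Sym_{2d}$ onto the diagram basis of $B_{2d}^0$, the map $x\mapsto D_0x$ is injective on $e(C)\La\Sym_{2d}$, so $\Ker(\kappa)=D_0I(m,n)\neq 0$ if and only if $e(C)I(m,n)\neq 0$; then the classical hyperoctahedral decomposition $\Ind_C^{\Sym_{2d}}\mathbf 1\cong\bigoplus_{\lambda\vdash d}S^{2\lambda}$ decides exactly when this holds. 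What your route buys is an exact criterion: $\Ker(\kappa)\neq 0$ precisely when some partition of $2d$ with all parts even contains the $(m+1)\times(2n+1)$ rectangle, i.e.\ precisely when $d\ge(m+1)(n+1)$. This is sharp, it is consistent with Example \ref{ex:osp12}, and it in fact proves the guess stated in the Remark following that example, which the paper leaves unverified; the stated bound $d\ge(m+1)(2n+1)$ of the Corollary is then a (weaker) special case, realised by your explicit partition $\lambda=(d-m(2n+1),(2n+1)^m)$. The cost is the extra classical input on $\Ind_C^{\Sym_{2d}}\mathbf 1$ (the Gelfand pair $(\Sym_{2d},C)$), which the paper's shorter, purely diagrammatic argument avoids.
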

\begin{proof}
The first assertion is immediate from Theorem \ref{thm:Main}. The second follows from the fact that
$I(m,n)=\bU^{(m+1)(2n+1)}(D\circ\left(I(m,n)\ot(1^{\ot (m+1)(2n+1)}\right))$, where $D:2(m+1)(2n+1)\to 0$
is the diagram with arcs $(i,2(m+1)(2n+1)-i+1)$, $i=1,\dots,(m+1)(2n+1)$ and $\bU$ is the map described in the first paragraph
of \S \ref{ss:ff} below.  The argument of $\bU^{(m+1)(2n+1)}$ on the right side is therefore a
non-zero element of $\Ker(\kappa)$.
\end{proof}

Note that as the discussion of the classical orthogonal and symplectic cases below shows, Corollary \ref{cor:lowest} is not optimal.
Here is a non-classical example.
\begin{example}\label{ex:osp12}
We take $m=1=n$; thus we are in the case of $\OSp(1|2)$.
In this case the smallest value of $d$ for which $\Ker(\kappa)$ could be non-zero is $d=3$ ($2d=(m+1)(2n+1)=2\times 3=6$).
However a straightforward calculation shows that in this case (i.e. $d=3$) we have $D_0I(1,2)=0$ in $B_6^0$. Thus
here $\Ker (\kappa)=0$ when $2d=(m+1)(2n+1)$. In fact in this case one sees that $\Ker(\kappa)=0$ if and only if
$d\leq 3$. For this one must verify that $\Ker(\kappa)\neq 0$ when $d=4$, i.e. that $D_0I(1,2)\neq 0$ in $B_8^0$.
\end{example}

\begin{remark}
The available evidence seems to point to the guess that the smallest value of $d$ for which $\Ker(\kappa)\neq 0$ is 
$d=(m+1)(n+1)$ (i.e. $2d=(m+1)(2n+1)+m+1$). However we have not verified this statement, which of course is true
in the classical cases $m=0$ or $n=0$.
\end{remark}
\subsection{Interpretation in the first formulation}\label{ss:ff} Theorem \ref{thm:Main} provides the SFT for $\OSp(V)$ in its
second formulation; that is, it describes the linear structure of $(T^{2d}(V))^{\OSp(V)}$.
 It may be useful to point out that it may be reinterpreted in terms of the Brauer algebra action
$\eta_r:B_r(m-2n)\lr \End_{\OSp(V)}(T^r(V))$ (see \cite[Cor. 5.7]{LZ6}).
For this we note that in the Brauer category \cite{LZ6}, for non-negative integers $p,q$ with $p>0$
there is an isomorphism $\bU:B_p^q\overset{\sim}{\lr} B_{p-1}^{q+1}$. It is denoted $\bU_{p-1}^1$
in \cite[Cor. 2.8]{LZ5}, and is depicted in Figure \ref{Fig9}.

\setlength{\unitlength}{0.3mm}
\begin{figure}[h]
\begin{center}
\begin{picture}(200, 100)(0,0)
\put(50, 70){$\dots q-1\dots$}

\put(0, 30){\line(0, 1){60}}

\put(30, 60){\line(0, 1){30}}
\put(45, 60){\line(0, 1){30}}
\put(119, 60){\line(0, 1){30}}

\put(20, 60){\line(1, 0){110}}
\put(20, 60){\line(0, -1){30}}
\put(130, 60){\line(0, -1){30}}
\put(20, 30){\line(1, 0){110}}

\put(70, 40){$D$}
\qbezier(0, 30)(18, -30)(30,30)
\put(45, 30){\line(0, -1){30}}

\put(50, 10){$\dots p-1\dots$}
\put(119, 30){\line(0, -1){30}}


\end{picture}
\end{center}
\caption{$\bU(D)$ for $D\in B_p^q$}
\label{Fig9}
\end{figure}

One therefore has an isomorphism $\bU^d:B_{2d}^0\lr B_d^d$, and this leads to the following extension of the diagram
\eqref{cd1}.

\be\label{cd2}
\begin{CD}
B_d^d @>\eta>> \End_G(T^d(V))\\
@A\bU^dA\cong A@ VV\cong V\\
{B^{0}_{2d}} @>\kappa>> {\left(T^{2d}(V^*)\right)^G} \\
@ V\chi VV @VVhV \\
\La\Sym_{2d}\subseteq{B_{2d}^{2d}} @>>\delta> {\left(T^{2d}(V^*)\ot T^{2d}(V)\right)^{\GL(V)}.}
\end{CD}
\ee

The top map $\eta:B_d^d(m-2n)\to\End_G(T^d(V))$ is precisely the map discussed in the paper \cite{LZ5}.
Theorem \ref{thm:Main} has the following evident consequence.

\begin{corollary}\label{cor:Main}
The kernel of the surjective homomorphism $\eta:B_d(m-2n):=B_d^d\to\End_G(T^d(V))$ is $\bU^d(D_0I(m,n))$.
\end{corollary}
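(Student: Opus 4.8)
The plan is to derive Corollary \ref{cor:Main} directly from Theorem \ref{thm:Main} by transporting the description of $\Ker(\kappa)$ across the isomorphisms in the extended diagram \eqref{cd2}. First I would note that the right-hand vertical isomorphism $\left(T^{2d}(V^*)\right)^G \overset{\sim}{\lr}\End_G(T^d(V))$ is the composite of the canonical $\La$-module identification $T^{2d}(V^*)\simeq T^d(V^*)\ot T^d(V^*)\simeq \Hom_\La(T^d(V),T^d(V^*)^{**})\simeq\End_\La(T^d(V))$ (using that $V$ is a free $\La$-module of finite rank, so $V^{**}\simeq V$), restricted to $G$-invariants; one checks this identification is $G$-equivariant, so it carries $\left(T^{2d}(V^*)\right)^G$ onto $\End_G(T^d(V))$. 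Granting that the diagram \eqref{cd2} commutes — in particular that the bottom square commutes by Proposition \ref{prop:chi}, and that the top square commutes by the defining compatibility of $\eta$ and $\kappa$ under $\bU^d$ established in \cite{LZ5} — the map $\eta\circ\bU^d:B_{2d}^0\lr\End_G(T^d(V))$ equals the composite of $\kappa$ with the right-hand isomorphism.

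Since $\bU^d:B_{2d}^0\lr B_d^d$ is an isomorphism of $\La$-modules (Corollary 2.8 of \cite{LZ5}), and $\eta$ factors as $\eta = (\text{right iso})\circ\kappa\circ(\bU^d)\inv$, we get immediately that $\Ker(\eta) = \bU^d(\Ker(\kappa))$. Now substitute the value $\Ker(\kappa)=D_0 I(m,n)$ from Theorem \ref{thm:Main}, giving $\Ker(\eta)=\bU^d(D_0 I(m,n))$, which is exactly the asserted formula. This is essentially a formal consequence once the commutativity of \eqref{cd2} and the bijectivity of $\bU^d$ are in hand.

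The one point that genuinely requires attention — and which I expect to be the main (modest) obstacle — is verifying that the top square of \eqref{cd2} commutes, i.e. that under the identification $\left(T^{2d}(V^*)\right)^G\simeq\End_G(T^d(V))$ the invariant $\kappa_D$ corresponds, up to the bookkeeping signs inherent in the graded setting, to the endomorphism $\eta(\bU^d(D))$. This is where the Brauer-category formalism of \cite{LZ5} does the work: the operation $\bU$ is precisely the categorical "bending" of the last output string of a diagram to an input string, and on the representation side this bending corresponds under the duality $V\simeq V^*$ induced by the form $(-,-)$ exactly to the passage between the cup $\gamma$ (contraction against $c_0$) and the form $(-,-)$ itself. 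So the commutativity of the top square is the diagrammatic translation of the statement that the $\La$-module isomorphism $\End_\La(T^d(V))\simeq T^{2d}(V^*)$ intertwines the Brauer-category action $\eta$ with the invariant-theoretic map $\kappa$; this is implicit in \cite[Cor. 5.7]{LZ6} and spelled out in \cite{LZ5}, and I would simply cite it rather than reprove it.

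In summary, the proof is: (1) record that $\bU^d$ is an $\La$-module isomorphism; (2) record that the right-hand vertical map of \eqref{cd2} is a $G$-equivariant $\La$-module isomorphism carrying $\left(T^{2d}(V^*)\right)^G$ onto $\End_G(T^d(V))$; (3) invoke the commutativity of the top square from \cite{LZ5, LZ6}, whence $\eta\circ\bU^d$ and $\kappa$ have the same kernel up to applying $\bU^d$, i.e. $\Ker(\eta)=\bU^d(\Ker\kappa)$; (4) substitute Theorem \ref{thm:Main} to conclude $\Ker(\eta)=\bU^d(D_0 I(m,n))$.
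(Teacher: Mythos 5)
Your proposal is correct and follows essentially the same route as the paper, which treats Corollary \ref{cor:Main} as an immediate consequence of Theorem \ref{thm:Main}: one transports $\Ker(\kappa)=D_0I(m,n)$ through the commutative diagram \eqref{cd2}, using that $\bU^d$ and the right-hand vertical map (the form-induced identification $\left(T^{2d}(V^*)\right)^G\cong\End_G(T^d(V))$ from \cite{LZ5, LZ6}) are isomorphisms. The only caveat is that your intermediate description of the right-hand isomorphism via double duals is slightly off --- it is the $G$-equivariant identification $V\simeq V^*$ coming from the orthosymplectic form that does the work --- but you note this correctly later, so the argument stands.
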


\begin{remark}
In \cite{LZ4} and \cite{LZ5} we showed that in the classical cases when $m=0$ or when $n=0$, the kernel is actually generated by a single idempotent
in $B_d(m-2n)$, but we do not yet have a similar result in the general case.
\end{remark}
\section{Application to the classical groups}

In this final section we shall show how our main theorem provides
a new proof of the second fundamental theorem in the classical cases of the
orthogonal and symplectic groups over $\C$  (see, e.g., \cite{GW, P2}). The principal observation is that Theorem \ref{thm:Main}
and its proof remain valid {\it mutatis mutandis} in these cases when $\La$ is replaced by $\C$,
$V=V_\C\ot_\C\La$ by $V_\C$, and $\OSp(V_\C\ot_\C\La)$
is replaced by $G=O(m,\C)$ or $\Sp(2n,\C)$.

In this section we therefore take $V$ to be $V_\C$, and apply Theorem \ref{thm:Main} in those respective cases.

\subsection{The othogonal case: $G=O(m,\C)$} Here we take $V_\C=(V_\C)_{\bar0}$ to be purely even, and the form
$(-,-)$ to be symmetric. Then parts (1)-(3) of Proposition \ref{prop:ltohl} recover the key elements in the proof
of the first fundamental theorem of the invariant theory of $O(m, \C)$ given in \cite{ABP}.
The ideal $I(m,n)=I(m,0)$ is spanned (over $\C$) by elements of the form
$\pi\alpha^-(m+1)\pi'$, where $\pi,\pi'\in\Sym_{2d}$ and for $\ell$ such that $1\leq\ell\leq 2d$,  $\alpha^-(\ell)=\sum_{\sigma\in\Sym_{\ell}}\varepsilon(\sigma)\sigma$,
where $\ve$ is the alternating character of $\Sym_{2d}$ and $\Sym_{\ell}\subseteq\Sym_{2d}$ is the subgroup which permutes the
first $\ell$ symbols.

Theorem \ref{thm:Main} asserts that $\Ker\left(\kappa: B_{2d}^0(\C)\lr (T^{2d}(V^*)^G\right)$ is spanned by the elements $D_0\pi\alpha^-(m+1)\pi'$.
The next Lemma shows that if $d$ is small, the kernel is zero.
\begin{lemma}\label{lem:smalld}
If $d\leq m$, then each element $D_0\pi\alpha^-(m+1)\pi'$ of $B_{2d}^0$ is zero.
\end{lemma}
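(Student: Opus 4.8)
The plan is to show that, because $d\le m$, the perfect matching underlying the Brauer diagram $D_0\pi$ must contain an arc joining two of the first $m+1$ points, and then to exploit the fact that in the Brauer category a cap annihilates the antisymmetriser $\alpha^-(m+1)$ on any block of strands containing the two capped strands. For the combinatorial step, we may assume $m+1\le 2d$, since otherwise no partition of $2d$ has a column of height $m+1$ and $I(m,0)$ meets $\C\Sym_{2d}$ in $0$, so there is nothing to prove. Put $A=\{1,\dots,m+1\}$ and $B=\{m+2,\dots,2d\}$. In the matching underlying $D_0\pi$, the number of points of $A$ lying on arcs that cross into $B$ is at most $|B|=2d-m-1$, so at least $(m+1)-(2d-m-1)=2(m+1-d)\ge 2$ points of $A$ lie on arcs internal to $A$. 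Hence $D_0\pi$ has an arc $(a,b)$ with $1\le a<b\le m+1$.

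Next I would normalise this arc. Choose $\tau\in\Sym_{m+1}\subseteq\Sym_{2d}$ with $\tau(a)=1$ and $\tau(b)=2$. Since $\tau\,\alpha^-(m+1)=\ve(\tau)\,\alpha^-(m+1)$, we get $D_0\pi\,\alpha^-(m+1)=\ve(\tau)\,(D_0\pi\tau\inv)\,\alpha^-(m+1)$, and $D_0\pi\tau\inv$ is a matching having $(1,2)$ as an arc. Thus it suffices to prove $D'\,\alpha^-(m+1)=0$ whenever $D'\colon 2d\to 0$ is a matching with $(1,2)$ an arc. For such a $D'$, factor $D'=N\circ(\cap\ot\id_{2d-2})$, where $\cap\colon 2\to 0$ caps strands $1,2$ and $N\colon 2d-2\to 0$ matches the remaining $2d-2$ points.

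Finally I would invoke the Brauer-category identity $\cap\circ s=\cap$, where $s\colon 2\to 2$ is the crossing (the crossing isotopes through the cap), together with the identity $s_1\,\alpha^-(m+1)=-\alpha^-(m+1)$ in $\C\Sym_{m+1}$ ($s_1=(1,2)$). Since $\alpha^-(m+1)$ acts as the antisymmetriser on strands $1,\dots,m+1$ and as the identity on strands $m+2,\dots,2d$, these give
\[
(\cap\ot\id_{2d-2})\circ\alpha^-(m+1)=(\cap\ot\id_{2d-2})\circ(s\ot\id_{2d-2})\circ\alpha^-(m+1)=-(\cap\ot\id_{2d-2})\circ\alpha^-(m+1),
\]
where the first equality uses $\cap=\cap\circ s$ and the second uses $(s\ot\id_{2d-2})\circ\alpha^-(m+1)=-\alpha^-(m+1)$. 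Hence $(\cap\ot\id_{2d-2})\circ\alpha^-(m+1)=0$, so $D'\,\alpha^-(m+1)=N\circ 0=0$; multiplying on the right by $\pi'$ then gives $D_0\pi\,\alpha^-(m+1)\,\pi'=0$.

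The only real obstacle is bookkeeping: making the counting argument precise (and disposing of the degenerate range $2d<m+1$ cleanly), and being careful with the composition conventions of the Brauer category so that ``$\alpha^-(m+1)$ acts on the first $m+1$ strands'' is manifestly compatible with the factorisation $D'=N\circ(\cap\ot\id_{2d-2})$. Once the internal arc inside $\{1,\dots,m+1\}$ has been located, the remaining manipulations are formal and require no further input beyond the standard relations of the Brauer category and the $\Sym_{m+1}$-antiinvariance of $\alpha^-(m+1)$.
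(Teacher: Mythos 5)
Your argument is correct and is essentially the paper's: the same counting produces an arc of $D_0\pi$ with both ends in $\{1,\dots,m+1\}$, and your identity $\cap\circ s=\cap$ is exactly the paper's observation that $D(ij)=D$ for the transposition $(ij)$ of the two ends of that arc, which together with $(ij)\alpha^-(m+1)=-\alpha^-(m+1)$ forces $D_0\pi\alpha^-(m+1)=-D_0\pi\alpha^-(m+1)=0$. The only difference is cosmetic: the paper applies $(ij)$ directly, so the normalisation of the arc to $(1,2)$ and the factorisation $D'=N\circ(\cap\ot\id_{2d-2})$ are not needed.
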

\begin{proof}
Let $\pi\in\Sym_{2d}$ and consider the diagram $D=D_0\pi$.
If $d\leq m$, then $d<m+1$, and it follows that at least one arc of $D$ has both ends in $\{1,2,\dots,m+1\}$
(if each arc with an end in $\{1,2,\dots,m+1\}$ had an end outside $\{1,2,\dots,m+1\}$, we would have $2d\geq 2(m+1)$).
Suppose this arc is from $i$ to $j$, where $1\leq i<j\leq m+1$, and write $(ij)$ for the transposition in $\Sym_{2d}$ which
interchanges $i$ and $j$.

Then by the rules for multiplying Brauer diagrams, we have $D(ij)=D$, whence $D_0\pi(ij)\alpha^-(m+1)
=D_0\pi\alpha^-(m+1)$.

But by the alternating property of $\alpha^-(m+1)$, we have $(ij)\alpha^-(m+1)=-\alpha^-(m+1)$,
whence $D_0\pi(ij)\alpha^-(m+1)=-D_0\pi\alpha^-(m+1)$. The Lemma follows.
\end{proof}

\setlength{\unitlength}{0.25mm}
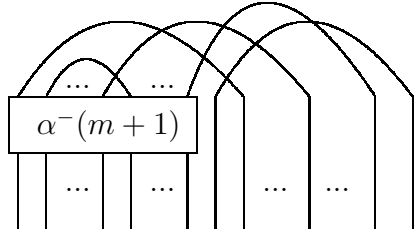
\begin{figure}[h]
\begin{center}
\begin{picture}(250, 120)(10,-10)
\qbezier(30, 60)(80, 140)(150, 60)
\qbezier(45, 60)(65, 100)(90, 60)
\qbezier(75, 60)(120, 140)(185, 60)
\qbezier(120, 60)(155, 160)(220, 60)
\qbezier(135, 60)(180, 140)(240, 60)

\put(100, 65){$...$}
\put(55, 65){$...$}

\put(25, 60){\line(1, 0){100}}
\put(25, 60){\line(0, -1){30}}
\put(125, 60){\line(0, -1){30}}
\put(25, 30){\line(1, 0){100}}

\put(40, 40){$\alpha^-(m+1)$}

\put(30, 30){\line(0, -1){40}}
\put(45, 30){\line(0, -1){40}}
\put(75, 30){\line(0, -1){40}}
\put(90, 30){\line(0, -1){40}}
\put(120, 30){\line(0, -1){40}}

\put(135, 60){\line(0, -1){70}}
\put(185, 60){\line(0, -1){70}}
\put(220, 60){\line(0, -1){70}}
\put(150, 60){\line(0, -1){70}}
\put(240, 60){\line(0, -1){70}}

\put(55, 10){$...$}
\put(100, 10){$...$}
\put(160, 10){$...$}
\put(193, 10){$...$}
\end{picture}
\end{center}
\caption{A zero element in $B_{2d}^0$}
\label{Fig8}
\end{figure}

\begin{corollary}\label{cor:smalld}
If $d\leq m$, $\Ker(\kappa)=0$, and $\kappa:B_{2d}^0(\C)\lr (T^{2d}(V^*))^G$ is an isomorphism.
\end{corollary}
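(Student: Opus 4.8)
The plan is to combine \lemref{lem:smalld} with the first fundamental theorem for $O(m,\C)$, which in the present setup is precisely the surjectivity of $\kappa:B_{2d}^0(\C)\lr (T^{2d}(V^*))^G$. So I would first observe that surjectivity of $\kappa$ is the classical FFT for the orthogonal group (the analogue of \corref{cor:fft-osp2} with $\La$ replaced by $\C$, $V$ by $V_\C$, and $\OSp(V)$ by $O(m,\C)$): the $G$-invariant multilinear functions on $V_\C^{\ot 2d}$ are spanned by the ``complete contractions'' $\kappa_D$, $D\in B_{2d}^0$. This is exactly \thmref{thm:fft-osp2}/\corref{cor:fft-osp2} specialised to the classical case, which the paper has already asserted remains valid \emph{mutatis mutandis}.

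Next, by \thmref{thm:Main} in its classical incarnation, $\Ker(\kappa)=D_0 I(m,0)$, and $I(m,0)$ is spanned by the elements $\pi\,\alpha^-(m+1)\,\pi'$ with $\pi,\pi'\in\Sym_{2d}$. \lemref{lem:smalld} shows that when $d\le m$ every generator $D_0\pi\,\alpha^-(m+1)\,\pi'$ of $D_0 I(m,0)$ is already zero in $B_{2d}^0$, because for $d\le m$ any diagram $D_0\pi$ has an arc joining two points of $\{1,\dots,m+1\}$, and composing with the corresponding transposition fixes the diagram while negating $\alpha^-(m+1)$. Hence $\Ker(\kappa)=D_0 I(m,0)=0$ when $d\le m$.

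Finally, a surjective $\C$-linear map with zero kernel is an isomorphism, so $\kappa:B_{2d}^0(\C)\lr (T^{2d}(V^*))^G$ is an isomorphism for $d\le m$, which is the assertion. I would write this up in three short steps: (i) cite FFT for $O(m,\C)$ for surjectivity; (ii) invoke \thmref{thm:Main} and \lemref{lem:smalld} to get $\Ker(\kappa)=0$; (iii) conclude bijectivity. I do not anticipate a genuine obstacle here: the only point requiring a word of care is making explicit that surjectivity of $\kappa$ in the classical case is the known FFT for the orthogonal group (and that the whole apparatus of \thmref{thm:Main} transfers to the $\C$-setting, which the section's opening paragraph already grants), after which the corollary is immediate from \lemref{lem:smalld}.
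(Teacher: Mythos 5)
Your proposal is correct and follows essentially the same route as the paper: surjectivity of $\kappa$ is the classical FFT, while \thmref{thm:Main} (in its classical form) identifies $\Ker(\kappa)$ as the span of the elements $D_0\pi\alpha^-(m+1)\pi'$, which \lemref{lem:smalld} shows all vanish for $d\le m$. Nothing further is needed.
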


Our main theorem now has the following interpretation in the present case (cf. \cite[Theorem 3.4]{LZ5}).
Recall that diagrams $D\in B_{2d}^0$ are in canonical bijection with partitions of $\{1,\dots,2d\}$
into pairs.

\begin{theorem}\label{thm:orth} Let $V=V_{\bar0}=\C^m$ and $G=O(n,\C)$.
If $d\leq m$, then $\Ker(\kappa)=0$. Assume that $d\geq m+1$.

Define a set of linear relations among the $\kappa_D$ as follows. Let $S,S'$ be two disjoint
subsets of $\{1,\dots,2d\}$ such that $|S|=|S'|=m+1$,
 and for $\sigma\in\Sym(\{S\})$, let $D(\pi,\pi';\sigma)$ be the diagram in $B_{2d}^0$ which pairs
$\sigma(s_{i})$ with $s'_i$, where the $s_i\in S,s'_i\in S'$ are written in increasing order, and the points in
$\{1,\dots,2d\}\setminus(S\cup S')$ are paired as they
are in each diagram occurring in $D_0\pi\alpha^-(m+1)\pi'$.

Then for each $\pi,\pi'\in\Sym_{2d}$,
\be\label{eq:relo}
\sum_{\sigma\in\Sym_{m+1}}\ve(\sigma)\kappa_{D(\pi,\pi',\sigma)}=0,
\ee
and all relations among the $\kappa_D$ are linear consequences of these relations.
\end{theorem}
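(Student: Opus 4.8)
The plan is to specialise Theorem~\ref{thm:Main} to the case $n=0$ (so $V=V_\C=\C^m$, $G=O(m,\C)$, and the Grassmann algebra $\La$ is replaced by $\C$), where the theorem and its proof go through verbatim by the principal observation of this section. Theorem~\ref{thm:Main} then states that $\Ker(\kappa)=D_0I(m,0)$, and by the description of $I(m,0)$ given just above Lemma~\ref{lem:smalld}, $I(m,0)$ is spanned over $\C$ by the elements $\pi\,\alpha^-(m+1)\,\pi'$ with $\pi,\pi'\in\Sym_{2d}$. Hence $\Ker(\kappa)$ is spanned by the elements $D_0\pi\,\alpha^-(m+1)\,\pi'$. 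The first assertion (that $\Ker(\kappa)=0$ for $d\leq m$) is then exactly Corollary~\ref{cor:smalld}, which follows from Lemma~\ref{lem:smalld}. So the substance of the theorem is to re-express each spanning relation $\kappa_{D_0\pi\alpha^-(m+1)\pi'}=0$ in the stated combinatorial form, and conversely to recognise that the stated relations \eqref{eq:relo} are precisely these spanning relations.

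The key computational step is to unwind the diagram product $D_0\,\pi\,(ij)\,\pi'$ for the transposition $(ij)\in\Sym_{m+1}$, where $\Sym_{m+1}$ acts on the first $m+1$ symbols, using the rules for composing Brauer diagrams. First I would observe that $D_0\pi$ is a diagram $2d\to 0$, i.e.\ a pairing of $\{1,\dots,2d\}$; composing on the right with a permutation $\rho\in\Sym_{2d}$ (viewed as a diagram $2d\to 2d$) simply relabels the bottom endpoints, producing the pairing $D_0\pi\rho$ in which $i$ is paired with $j$ iff $\rho\inv(i),\rho\inv(j)$ were paired in $D_0\pi$. Applying this with $\rho=\sigma\pi'$, where $\sigma\in\Sym_{m+1}$, one sees that as $\sigma$ ranges over $\Sym_{m+1}$, the diagram $D_0\pi\sigma\pi'$ is obtained from the fixed diagram $D_0\pi\pi'$ by permuting by $\sigma\inv$ the $m+1$ endpoints that are the images under $\pi'\inv$ of $\{1,\dots,m+1\}$. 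Setting $S'=\{s_1<\dots<s_{m+1}\}$ to be the set of $\pi'$-images of $\{1,\dots,m+1\}$ and $S$ to be the set of their partners in the pairing $D_0\pi\pi'$, the resulting diagram is exactly $D(\pi,\pi';\sigma)$ as defined in the statement — modulo checking that generically (i.e.\ for $\pi,\pi'$ in general position) $S$ and $S'$ are disjoint of size $m+1$, and that the degenerate cases where an arc of $D_0\pi\pi'$ has both ends in $S'$ (forcing $|S|<m+1$) contribute only relations of the form $\kappa_D=0$, which are covered by Lemma~\ref{lem:smalld} and hence are consequences of the others.

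Granting this, the relation $\kappa_{D_0\pi\alpha^-(m+1)\pi'}=0$, i.e.\ $\sum_{\sigma\in\Sym_{m+1}}\ve(\sigma)\,\kappa_{D_0\pi\sigma\pi'}=0$, becomes precisely \eqref{eq:relo} after re-indexing $\sigma\mapsto\sigma\inv$ and using $\ve(\sigma)=\ve(\sigma\inv)$. Since every spanning element of $I(m,0)$ has the form $\pi\alpha^-(m+1)\pi'$, Theorem~\ref{thm:Main} (in the $n=0$ specialisation) gives that these relations span $\Ker(\kappa)$, which is the second assertion. The main obstacle I anticipate is bookkeeping: making the translation between the group-algebra element $D_0\pi\alpha^-(m+1)\pi'$ and the explicit diagrammatic description $D(\pi,\pi';\sigma)$ fully rigorous, in particular handling the degenerate configurations (arcs internal to $S\cup S'$) cleanly and verifying that they produce only already-known (zero) relations, so that one genuinely recovers \emph{all} relations from the family \eqref{eq:relo} indexed by unordered disjoint pairs $(S,S')$ rather than by arbitrary $\pi,\pi'$. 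This is routine but must be done carefully; everything else is a direct specialisation of results already established.
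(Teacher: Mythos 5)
Your proposal follows essentially the same route as the paper's own proof: specialise Theorem~\ref{thm:Main} to the purely even case, observe that the spanning relations are $\kappa(D_0\pi\alpha^-(m+1)\pi')=0$, discard the degenerate configurations (an arc internal to the distinguished $(m+1)$-set) by the argument of Lemma~\ref{lem:smalld}, and translate the non-degenerate ones into \eqref{eq:relo}; your extra bookkeeping on the right action of $\Sym_{2d}$ on pairings is exactly what the paper leaves as ``easily seen''. The only slight imprecision is that the degenerate elements $D_0\pi\alpha^-(m+1)\pi'$ are identically zero in $B_{2d}^0$ and so yield the trivial relation (not relations ``$\kappa_D=0$''), which is also how the paper treats them.
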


\begin{proof} The first statement is Corollary \ref{cor:smalld}.

By Theorem \ref{thm:Main}, all relations among the $\kappa_D$
are consequences of the relations $\kappa(D_0\pi\alpha^-(m+1)\pi')=0$, for $\pi,\pi'\in \Sym_{2d}$.

Now the argument of the proof of Lemma \ref{lem:smalld} shows that if two of the points in
the set $S:= \{\pi\inv(1),\pi\inv(2),\dots,\pi\inv(m+1)\}$ are paired by $D_0$ then $D_0\pi\alpha^-(m+1)\pi'=0$
in $B_{2d}^0$, and so to obtain a non-zero element of the kernel, we may assume that no two points in $S$ are paired
by $D_0$, and hence that the set $S'$ of points which are paired by $D_0$ with a point of $S$ is disjoint from $S$.

In this case, the relation $\kappa(D_0\pi\alpha^-(m+1)\pi')=0$ is easily seen to translate into \eqref{eq:relo}, and
the result is now clear.
\end{proof}

\setlength{\unitlength}{0.30mm}
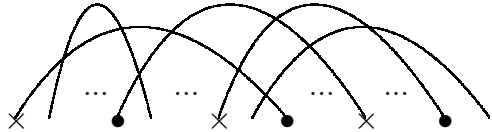
\begin{figure}[h]
\begin{center}
\begin{picture}(250, 60)(10,60)
\qbezier(30, 60)(80, 140)(150, 60) 	\put(25, 55){$\times$}\put(147, 55){$\bullet$}
\qbezier(45, 60)(65, 160)(90, 60)		
\qbezier(75, 60)(120, 160)(185, 60)	\put(72, 55){$\bullet$} \put(180, 55){$\times$}
\qbezier(120, 60)(155, 160)(220, 60)	\put(217, 55){$\bullet$}\put(115, 55){$\times$}
\qbezier(135, 60)(180, 140)(240, 60)	
\put(60, 70){$...$}
\put(100, 70){$...$}
\put(160, 70){$...$}
\put(193, 70){$...$}
\end{picture}
\end{center}
\caption{The set $S$ consists of $\bullet$'s and $S'$ of $\times$'s}
\label{Fig10}
\end{figure}

\subsection{The symplectic case} In this case we take $V=V_\C=V_{\bar1}$. The form  $(-,-)$ is then skew, and $G=\Sp(2n,\C)$.
The ideal $I(m,n)=I(0,n)$ is in this case spanned (over $\C$) by elements of the form
$\pi\alpha^+(2n+1)\pi'$, where $\pi,\pi'\in\Sym_{2d}$ and for each integer $\ell$ with $1\leq\ell\leq 2d$, $\alpha^+(\ell)=\sum_{\sigma\in\Sym_{\ell}}\sigma$,
where as above, $\Sym_{\ell}\subseteq\Sym_{2d}$ is the subgroup which permutes the
first $\ell$ symbols.

In this case, the analogue of Lemma \ref{lem:smalld} is the trivial observation that there are no non-zero elements of the kernel unless $2d\geq 2n+1$, i.e. $d\geq n+1$.

Taking into account that for a symplectic form $(v,v)=0$ for all $v\in V$, we obtain the following form of the main theorem for the symplectic case.

\begin{theorem}\label{thm:symp}
Let $V=V_{\bar1}=\C^{2n}$, and $G=\Sp(2n,\C)$. If $d\leq n$, then $\Ker(\kappa)=0$. Assume that $d\geq n+1$.

Define a set of linear relations among the $\kappa_D$ as follows. Let $S$ be a
subset of $\{1,\dots,2d\}$ such that $|S|=2n+1$. For any diagram $D\in B_{2d}^0$ and $\sigma\in\Sym(\{S\})$, let $D(S,\sigma)$ be
 the diagram $D(S,\sigma)=D\sigma$,
where $\sigma$ is regarded as an element of $\Sym_{2d}\subset B_{2d}^{2d}$ which fixes the points outside $S$.

Then
\be\label{eq:relsp}
\sum_{\sigma\in\Sym(\{S\})}\kappa_{D(S,\sigma)}=0,
\ee
and all relations among the $\kappa_D$ are linear consequences of these relations.
\end{theorem}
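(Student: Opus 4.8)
The plan is to read off Theorem~\ref{thm:symp} from Theorem~\ref{thm:Main} together with the explicit description of the ideal $I(0,n)$ recalled above. By Theorem~\ref{thm:Main} with $m=0$, we have $\Ker(\kappa)=D_0I(0,n)$, and $I(0,n)\subseteq\C\Sym_{2d}$ is the two-sided ideal spanned over $\C$ by the elements $\pi\,\alpha^+(2n+1)\,\pi'$ with $\pi,\pi'\in\Sym_{2d}$, where $\Sym_{2n+1}\subseteq\Sym_{2d}$ permutes $1,\dots,2n+1$. Hence $\Ker(\kappa)$ is the $\C$-span of the diagram combinations $D_0\pi\,\alpha^+(2n+1)\,\pi'$. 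The first assertion is then immediate: if $d\leq n$ then $2d<2n+1$, so no partition of $2d$ has a part $\geq 2n+1$ and $I(0,n)=0$, whence $\Ker(\kappa)=0$.

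The substance is to rewrite each spanning element $D_0\pi\,\alpha^+(2n+1)\,\pi'$ in the shape $\sum_{\sigma\in\Sym(\{S\})}D\sigma$ occurring in \eqref{eq:relsp}. The key point is a conjugation identity: writing $\alpha^+(\{X\}):=\sum_{\sigma\in\Sym(\{X\})}\sigma$ for a subset $X$ of $\{1,\dots,2d\}$, one has in $\C\Sym_{2d}$
\[
\alpha^+(2n+1)\,\pi'=\pi'\,\alpha^+(\{S\}),\qquad S:=(\pi')\inv\bigl(\{1,\dots,2n+1\}\bigr),
\]
since $\pi'\,\Sym(\{S\})\,(\pi')\inv=\Sym_{2n+1}$ and $\alpha^+(2n+1)$ depends only on this set. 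Therefore
\[
D_0\pi\,\alpha^+(2n+1)\,\pi'=(D_0\pi\pi')\,\alpha^+(\{S\})=\sum_{\sigma\in\Sym(\{S\})}D\sigma,\qquad D:=D_0\pi\pi'\in B_{2d}^0,
\]
a genuine sum of Brauer diagrams, since post-composing a diagram $2d\to 0$ with a permutation produces no closed loops, hence no scalar factor. Applying $\kappa$, which is $\C$-linear with $\kappa(D\sigma)=\kappa_{D\sigma}$, yields exactly the relation \eqref{eq:relsp}. Conversely, given any $D\in B_{2d}^0$ and any $(2n+1)$-subset $S$ of $\{1,\dots,2d\}$, I would choose $\pi'\in\Sym_{2d}$ with $\pi'(S)=\{1,\dots,2n+1\}$ and then $\pi\in\Sym_{2d}$ with $D_0\pi=D\,(\pi')\inv$, possible since $\Sym_{2d}$ acts transitively on perfect matchings via $\pi\mapsto D_0\pi$; reversing the computation gives $\sum_{\sigma\in\Sym(\{S\})}D\sigma=D_0\pi\,\alpha^+(2n+1)\,\pi'$.

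These two directions together show that the left-hand sides of the relations \eqref{eq:relsp}, as $D$ ranges over $B_{2d}^0$ and $S$ over the $(2n+1)$-subsets of $\{1,\dots,2d\}$, span precisely $D_0I(0,n)=\Ker(\kappa)$. In particular every relation \eqref{eq:relsp} holds, and since a linear relation among the $\kappa_D$ is by definition an element of $\Ker(\kappa)$, all relations among the $\kappa_D$ are $\C$-linear consequences of \eqref{eq:relsp}, proving the theorem. I expect the only delicate point to be the bookkeeping in the Brauer category: fixing a composition convention so that the conjugation identity above produces the correct subset $S$, and invoking the well-definedness of $\kappa_D$ on diagrams (Corollary~\ref{cor:kd}) together with the skew-symmetry of the form, in particular $(v,v)=0$, to phrase the relations purely diagrammatically. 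No geometric input is needed at this stage, the analytic substance having already been absorbed into Theorem~\ref{thm:Main}.
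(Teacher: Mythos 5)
Your proposal is correct and follows essentially the paper's route: the paper also deduces Theorem~\ref{thm:symp} directly from Theorem~\ref{thm:Main} via the description of $I(0,n)$ as the span of the elements $\pi\alpha^+(2n+1)\pi'$, and your conjugation identity $D_0\pi\,\alpha^+(2n+1)\,\pi'=D\,\alpha^+(\{S\})$ with $S=(\pi')\inv(\{1,\dots,2n+1\})$, together with the transitivity of $\Sym_{2d}$ on diagrams $D_0\pi$, is exactly the translation the paper leaves implicit when it says the result is clear. The details you supply (no closed loops when composing with a permutation, the converse choice of $\pi,\pi'$ for a given pair $(D,S)$, and the vanishing of $I(0,n)$ for $d\leq n$) are all accurate.
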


This is clear from Theorem \ref{thm:Main}.



\end{document}